\documentclass[10pt]{amsart}

\usepackage{verbatim,amsopn,amsthm,amssymb,amsmath,graphicx,subfigure}
\usepackage{hyperref}

\theoremstyle{plain}
\newtheorem{thm}{Theorem}[section]
\newtheorem{lem}[thm]{Lemma}
\newtheorem{prop}[thm]{Proposition}
\newtheorem{cor}[thm]{Corollary}
\newtheorem*{siegelslemma}{Siegel's Lemma}

\newtheorem*{ThmPolyUpper}{Theorem A}
\newtheorem*{ThmFreeAbComplexity}{Theorem B}
\newtheorem*{ThmEOC}{Theorem C}
\newtheorem*{ThmIEOC}{Theorem D}

\theoremstyle{definition}
\newtheorem{defn}[thm]{Definition}

\theoremstyle{remark}
\newtheorem*{remark}{Remark}

\DeclareMathOperator{\Cay}{Cayley}
\DeclareMathOperator{\len}{len}

\DeclareMathOperator{\id}{Id}

\newcommand{\xup}{{X \cup \mathcal{P}}}
\newcommand{\ds}{\displaystyle}

\begin{document}

\title{Quantifying the Residual Properties of $\Gamma$-Limit Groups}
\author{Brent B. Solie}
\address{Knox College, 2 East South Street, Galesburg, IL 61401}
\email{b.b.solie@gmail.com}
\date{\today}
\subjclass[2010]{Primary 20P05}
\keywords{geometric group theory, residual hyperbolicity, relative hyperbolicity, limit group}

\begin{abstract}
  \noindent  Let $\Gamma$ be a fixed hyperbolic group.
  The \emph{$\Gamma$-limit groups} of Sela are exactly the finitely generated, fully residually $\Gamma$ groups.
  We give a new invariant of $\Gamma$-limit groups called $\Gamma$-discriminating complexity and show that the $\Gamma$-discriminating complexity of any $\Gamma$-limit group is asymptotically dominated by a polynomial.
  Our proof relies on an embedding theorem of Kharlampovich-Myasnikov which states that a $\Gamma$-limit group embeds in an iterated extension of centralizers over $\Gamma$.
  The result then follows from our proof that if $G$ is an iterated extension of centralizers over $\Gamma$, the $G$-discriminating complexity of a rank $n$ extension of a cyclic centralizer of $G$ is asymptotically dominated by a polynomial of degree $n$.
\end{abstract}

\maketitle

\section{Introduction}

Quantitative analysis of group properties is an increasingly active field in modern group theory.
In particular, the various residual properties of groups have proven themselves quite suitable for investigation through quantitative means.

Let $P$ be a property of groups, and recall that a group $G$ is \emph{residually $P$} \index{residually $P$} if for every nontrivial element $g \in G$, there is a homomorphism $\phi: G \rightarrow H$ such that $H$ is a group with property $P$ and $\phi(g) \neq 1$.
We say that a group is \emph{fully residually $P$} \index{fully residually $P$} if for every finite subset of nontrivial elements $S \subseteq G-1$, there is a homomorphism $\phi: G \rightarrow H$ such that $H$ is a group with property $P$ and $1 \notin \phi(S)$.

(An alternate definition of fully residually $P$ insists that the homomorphism $\phi$ not just avoid $1$ but actually be injective on $S$.
Note that $\phi$ is injective on $S$ if and only if the image under $\phi$ of the set $\{uv^{-1} : u,v \in S, u \neq v\}$ does not include 1, so these definitions are equivalent.
Also note that we also do not require our homomorphisms to be surjective, as may sometimes be the case when discussing residual properties.)

For instance, let $G$ be a residually finite group with finite generating set $X$.
Let $f: \mathbb{N} \rightarrow \mathbb{N}$ be such that whenever $g \in G-1$ has $X$-length at most $R$, then there exists $\phi:G \rightarrow H$ such that $\phi(g) \neq 1$ and $|H| \leq f(R)$.
When $f$ is the smallest such function, then we think of $f$ as measuring the complexity of the residual finiteness of $G$; we may also think of $f$ as measuring the growth of the number of subgroups of $G$ with respect to index.
This version of complexity has been studied extensively by Bou-Rabee in \cite{Bou-Rabee2010}, with additional results by Kassabov and Matucci \cite{Kassabov2009}.

Bou-Rabee has obtained further results by restricting his attention to finite nilpotent or finite solvable quotients.
This yields group invariants known as the \emph{nilpotent Farb growth} and the \emph{solvable Farb growth}, and Bou-Rabee has obtained new characterizations of algebraic group properties in terms of the asymptotic properties of these growth functions.
For instance, Bou-Rabee has shown that a finitely generated group $G$ is nilpotent if and only if it has nilpotent Farb growth which is polynomial in $\log(n)$ \cite{Bou-Rabee2010}.
Similarly, a finitely generated group is solvable and virtually nilpotent if and only if it has solvable Farb growth that is polynomial in $\log(n)$ \cite{Bou-Rabee2011}.

Rather than considering residually finite groups, we will study another well-known class of groups with strong residual properties: the $\Gamma$-limit groups of Sela.
Let $\Gamma$ be a fixed torsion-free hyperbolic group.
A \emph{$\Gamma$-limit group} $G$ is a finitely generated, fully residually $\Gamma$ group: for any finite subset $S \subseteq G-1$, there exists a homomorphism $\phi: G \rightarrow \Gamma$ such that $1 \notin \phi(S)$.
We say that the set $S$ is \emph{$\Gamma$-discriminated} by $\phi$.

Fix finite generating sets $X$ and $Y$ for $G$ and $\Gamma$, respectively.
Let the homomorphism $\phi_R: G \rightarrow \Gamma$ discriminate $B_R(G,X)-1$, where $B_R(G,X)$ is the closed ball of radius $R$ in $G$ with respect to $X$.
Here, we measure the complexity of $\phi_R$ by the maximum $Y$-length over all images of elements of $X$.
The minimum complexity required to discriminate each set $B_R(G,X)-1$, as a function of $R$, is called the $\Gamma$-discriminating complexity of $G$, and it is an invariant of $G$ up to asymptotic equivalence. (See Definition \ref{Defn--Discriminating complexity}.)

Our main result on the $\Gamma$-discriminating complexity of $\Gamma$-limit groups is the following:

\begin{ThmPolyUpper}[c.f. Theorem \ref{Solie--Theorem--limit group has poly upper}]
  The $\Gamma$-discriminating complexity of a $\Gamma$-limit group is asymptotically dominated by a polynomial.
\end{ThmPolyUpper}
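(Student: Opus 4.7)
The plan is to reduce Theorem~A to the single-step statement indicated in the abstract via the Kharlampovich--Myasnikov embedding theorem, and then to prove that single-step statement by an induction up the tower of centralizer extensions. The proof therefore breaks naturally into three pieces: a monotonicity lemma that lets discriminating complexity descend to finitely generated subgroups, the embedding of a $\Gamma$-limit group into an iterated extension of centralizers $G$ over $\Gamma$, and a careful tracking of the polynomial degree as each new centralizer extension is added.

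\textbf{Monotonicity and reduction.} The first step I would carry out is a monotonicity result: if $H$ is a finitely generated subgroup of $G$ with finite generating sets $X_H$ and $X_G$, then any expression of the elements of $X_H$ as words of bounded length in $X_G$ implies that the $\Gamma$-discriminating complexity of $H$ is asymptotically dominated by that of $G$. This is essentially bookkeeping: a ball of radius $R$ in $H$ lies in a ball of radius $CR$ in $G$ for some constant $C$, and any $\Gamma$-homomorphism discriminating the latter restricts to one discriminating the former with comparable complexity. Invoking the Kharlampovich--Myasnikov embedding, every $\Gamma$-limit group sits inside an iterated extension of centralizers over $\Gamma$, so Theorem~A reduces to proving the polynomial bound for such iterated extensions.

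\textbf{Inductive step through centralizer extensions.} Next I would induct on the height of the tower. The base case is $\Gamma$ itself, whose $\Gamma$-discriminating complexity is trivially constant (the identity map). For the inductive step, suppose $G$ is an iterated extension of centralizers over $\Gamma$ with polynomial $\Gamma$-discriminating complexity of some degree $d$, and let $G' = G \ast_{\langle u \rangle} (\langle u \rangle \times \mathbb{Z}^n)$ be a rank $n$ extension of a cyclic centralizer $\langle u \rangle$ of $G$. Following the Baumslag--Myasnikov--Remeslennikov philosophy, I would construct a family of retractions $\psi_{k_1, \dots, k_n} : G' \to G$ sending the new stable letters $t_i$ to $u^{k_i}$ and fixing $G$. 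The key claim is then that for every finite set $S$ of nontrivial elements of $G'$ of $X$-length at most $R$, one can choose parameters $k_i$ bounded by a polynomial in $R$ such that $\psi_{k_1, \dots, k_n}$ is injective on $S$; the resulting image in $G$ has $X_G$-length bounded by a polynomial of degree one higher than before, and composing with a $\Gamma$-discriminator for $G$ yields $\Gamma$-complexity of degree $d + n$. Summing ranks across the tower gives a global polynomial bound.

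\textbf{Main obstacle.} The serious technical difficulty is the polynomial (rather than exponential) control of the parameters $k_i$. The classical discrimination theorems over free or hyperbolic groups typically use the ``big powers'' trick, which provides $k_i$ growing exponentially in $R$ because the cancellation estimates used are qualitative. To obtain polynomial bounds I expect to need quantitative small-cancellation or thin-polygon estimates in $G$ relative to its abelian parabolics, exploiting the fact that $G$ is hyperbolic relative to the extended centralizers at each stage. Roughly, one needs to show that a nontrivial normal form in $G'$ of syllable length at most $R$ survives under $\psi_{k_1,\dots,k_n}$ as soon as the exponents $k_i$ exceed some linear or polynomial threshold in $R$, governed by the relative hyperbolicity constants. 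Once such quantitative estimates are established, the inductive combination is routine and produces the polynomial $\Gamma$-discriminating complexity asserted by Theorem~A, with degree determined by the total rank of the centralizer extensions in the embedding tower.
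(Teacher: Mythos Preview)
Your strategy is essentially the paper's: monotonicity to finitely generated subgroups, the Kharlampovich--Myasnikov embedding, retractions $t_i \mapsto u^{k_i}$ from the extension back to $G$, and a quantitative big-powers estimate extracted from the relative hyperbolicity of iterated extensions of centralizers. The paper carries out exactly this program; its Lemmas~\ref{Solie--Lemma--Padding for short multiwords}--\ref{Solie--Lemma--Padding for more general words} produce a \emph{linear} big-powers threshold $N_2(R)$, confirming your expectation in the ``main obstacle'' paragraph.

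One bookkeeping correction: your degree accounting is additive where it should be multiplicative. A rank-$n$ extension $G'$ has $G$-discriminating complexity $\approx R^n$, not $\approx R$: the $\mathbb{Z}^n \to \mathbb{Z}$ component alone must be injective on $[-R,R]^n$, which forces exponents of size $\approx R^{n-1}$ (this is sharp by Siegel's Lemma), and the linear big-powers threshold contributes another factor of $R$. Hence the image of $B_R(G')$ lands in a ball of radius $\approx R^{n+1}$ in $G$, and composing with a degree-$d$ discriminator for $G$ gives total degree on the order of $(n+1)d + n$, not $d+n$. The paper accordingly records the bound through the tower as the \emph{product} of the ranks (Theorem~D), not their sum. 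This discrepancy does not affect Theorem~A, which only asserts a polynomial bound.
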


In order to prove Theorem A, we must first start with the simplest examples of $\Gamma$-limit groups: the finitely generated, free Abelian groups.
The free Abelian group $\mathbb{Z}^n$ is fully residually $\mathbb{Z}$, and our next main result establishes its $\mathbb{Z}$-discriminating complexity.

\begin{ThmFreeAbComplexity}[c.f. Theorem \ref{Solie--Theorem--Free Abelian rank n has complexity n-1}]
  The $\mathbb{Z}$-discriminating complexity of $\mathbb{Z}^n$ is asymptotically equivalent to a polynomial of rank $n-1$.
\end{ThmFreeAbComplexity}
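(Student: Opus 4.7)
The plan is to identify a homomorphism $\phi \colon \mathbb{Z}^n \to \mathbb{Z}$ with the integer vector $\mathbf{a} = (a_1,\ldots,a_n)$ where $a_i = \phi(e_i)$, and to observe that with the standard generating set on each side, the complexity of $\phi$ equals $\|\mathbf{a}\|_\infty = \max_i |a_i|$. Moreover, $\phi$ discriminates the ball $B_R(\mathbb{Z}^n)-1$ if and only if the affine hyperplane $\{\mathbf{x} \in \mathbb{Z}^n : \mathbf{a}\cdot\mathbf{x}=0\}$ contains no nonzero lattice point of $\ell^\infty$ (or $\ell^1$) norm at most $R$. With this reduction the theorem splits cleanly into matching upper and lower bounds on $\|\mathbf{a}\|_\infty$, each polynomial in $R$ of degree $n-1$.

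For the upper bound I would use the classical base-$M$ trick. Set $M = 2R+1$ and $a_i = M^{i-1}$. If $\mathbf{x} \in B_R - \{0\}$, then each coordinate $x_i$ is a ``digit'' in $\{-R,\ldots,R\}$, and the uniqueness of balanced base-$M$ representation forces $\sum_i M^{i-1} x_i \neq 0$. Hence the corresponding $\phi$ discriminates $B_R - 1$ and has complexity $(2R+1)^{n-1}$, so the $\mathbb{Z}$-discriminating complexity is $O(R^{n-1})$.

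For the lower bound I would invoke Siegel's Lemma, which is already named in the preamble. Given a discriminating vector $\mathbf{a}$ with $\|\mathbf{a}\|_\infty = A$, Siegel's Lemma applied to the single homogeneous equation $a_1 x_1 + \cdots + a_n x_n = 0$ in $n$ unknowns produces a nonzero integer solution $\mathbf{x}$ satisfying $\|\mathbf{x}\|_\infty \le (nA)^{1/(n-1)}$. Since $\phi_{\mathbf{a}}$ is supposed to discriminate $B_R - 1$, this solution must lie outside $B_R$, forcing $R < (nA)^{1/(n-1)}$ and hence $A > R^{n-1}/n$. Together with the upper bound this gives asymptotic equivalence to $R^{n-1}$.

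I do not expect a serious obstacle: both halves are classical. The only things to attend to are matching conventions (the precise norm used to define $B_R$ only affects constants, and any two word metrics on $\mathbb{Z}^n$ are bilipschitz), verifying the degenerate case $n=1$ (where the identity discriminates with constant complexity $R^0$, agreeing with $n-1=0$), and citing Siegel's Lemma in a form with the stated exponent $1/(n-1)$. The argument otherwise reduces entirely to elementary lattice geometry.
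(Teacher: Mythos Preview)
Your proposal is correct and follows essentially the same approach as the paper: the upper bound via the base-$(2R+1)$ map $t \mapsto \sum_i (2R+1)^{i-1}t_i$ is exactly the paper's Lemma~\ref{Solie--Lemma--Discriminating Free Abelian Map}, and the lower bound via Siegel's Lemma applied to the single linear form is exactly the paper's Corollary~\ref{Solie--Corollary--Lower Bound on Discriminating for Free Abelian Groups}. The only cosmetic difference is that the paper carries out the $\ell^1$/$\ell^\infty$ conversion explicitly (yielding the factor $n$ in $\lfloor R/n\rfloor$) rather than invoking bilipschitz equivalence of word metrics, but the arguments are otherwise identical.
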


The fundamental construction in our study of $\Gamma$-limit groups is the extension of a centralizer, a construction in which free Abelian groups play a central role.
Informally, if $G$ is a $\Gamma$-limit group, we may construct another $\Gamma$-limit group $G'$ by extending a centralizer of $G$ by a free Abelian group of finite rank. (See Definition \ref{Defn--Extension of a centralizer}.)

Our main technical lemma, Lemma \ref{Solie--Lemma--Padding for more general words}, is motivated by the well-known ``big powers'' property of hyperbolic groups.
If $\Gamma$ is a hyperbolic group and $u \in \Gamma$ generates its own centralizer, then for any tuple of elements $(g_1, g_2, \dots, g_k)$ of elements of $G - \langle u \rangle$, there is an integer $N$ such that
\begin{equation*}
  u^{n_0} g_1 u^{n_1} g_2 u^{n_2} \dots u^{n_{k-1}} g_k u^{n_k}
\end{equation*}
is nontrivial in $\Gamma$ whenever $|n_i|>N$ for $i=1, \dots, k-1$ and either $|n_i|>N$ or $n_i=0$ for $i=0, k$.

The big powers property seems to appear first due to B. Baumslag in his study of fully residually free groups \cite{Baumslag1967}; a later version appears due to Ol'shanski\u{\i} in the context of hyperbolic groups \cite{Olcprimeshanskiui1993}.
Most recently, the big powers property is proven by Kharlampovich and Myasnikov for relatively hyperbolic groups in \cite{Kharlampovich2009} using the techniques of Osin from \cite{Osin2005,Osin2006}.
Lemma \ref{Solie--Lemma--Padding for more general words} is an analysis of the big powers property for relatively hyperbolic groups with the goal of analyzing the dependence of $N$ on the group $G$, generating set $X$, and the elements $g_i$ and $u$.

By iterating the extension of centralizer construction, we obtain a group known as an \emph{iterated extension of centralizers} (see Definition \ref{Defn--Iterated extension of centralizers}).
Iterated extensions of centralizers are relatively hyperbolic and therefore have the big powers property.
By combining Theorem B with our analysis of the big powers property, we obtain our third main result.

\begin{ThmEOC}[c.f. Theorem \ref{Solie--Theorem--EOC has poly upper}]
  Let $G$ be an iterated extension of centralizers over $\Gamma$.
  Let $G'$ be a rank $n$ extension of a cyclic centralizer of $G$.
  Then the $G$-discriminating complexity of $G'$ is asymptotically dominated by a polynomial of degree $n$.
\end{ThmEOC}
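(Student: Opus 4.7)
The plan is to discriminate each ball $B_R(G', X')-1$ by a single canonical retraction $\phi_{\bar{m}} : G' \to G$, defined by fixing $G$ pointwise and sending the generators $t_1,\dots,t_n$ of the $\mathbb{Z}^n$-factor to $u^{m_1},\dots,u^{m_n}$ respectively. The $G$-complexity of $\phi_{\bar{m}}$, measured by the maximum $X$-length of the image of a generator of $G'$, is $O(\|\bar{m}\|_\infty \cdot |u|_X)$, so it will suffice to exhibit, for each $R$, a single $\bar{m} \in \mathbb{Z}^n$ with $\|\bar{m}\|_\infty = O(R^n)$ whose retraction discriminates all of $B_R(G', X')-1$.

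For each $w \in B_R(G', X')-1$ I would first write $w$ in the amalgamated normal form
\[
w = g_0 \, \vec{a}_1 \, g_1 \, \vec{a}_2 \, g_2 \cdots \vec{a}_k \, g_k
\]
coming from $G' = G *_{\langle u \rangle} (\langle u \rangle \times \mathbb{Z}^n)$, with syllables $\vec{a}_i \in \mathbb{Z}^n - \{0\}$ and middle factors $g_i \in G - \langle u \rangle$. Routine bookkeeping for amalgamated products bounds $k$, the $X$-lengths $|g_i|_X$, and the sup-norms $\|\vec{a}_i\|_\infty$ each by $O(R)$. Under $\phi_{\bar{m}}$ we obtain the padded word
\[
\phi_{\bar{m}}(w) = g_0 \cdot u^{\bar{m}\cdot\vec{a}_1} \cdot g_1 \cdots u^{\bar{m}\cdot\vec{a}_k} \cdot g_k,
\]
which is exactly the form controlled by the big-powers property of $G$. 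If $\bar{m}$ can be chosen so that every exponent satisfies $|\bar{m}\cdot\vec{a}_i| > N(R)$, where $N(R)$ is the threshold provided by Lemma \ref{Solie--Lemma--Padding for more general words}, then $\phi_{\bar{m}}(w) \neq 1$; the case $k=0$ is trivial since $\phi_{\bar{m}}$ restricts to the identity on $G$.

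To produce a single uniform $\bar{m}$, I would take $V_R \subset \mathbb{Z}^n - \{0\}$ to be the finite collection of all syllables $\vec{a}_i$ arising in such normal forms, noting $V_R \subseteq B_{O(R)}(\mathbb{Z}^n)$. By Theorem B, the $\mathbb{Z}$-discriminating complexity of $\mathbb{Z}^n$ is polynomial of degree $n-1$, so there is $\bar{m}_0 \in \mathbb{Z}^n$ of sup-norm $O(R^{n-1})$ with $\bar{m}_0 \cdot \vec{a} \neq 0$ for every $\vec{a} \in V_R$; since these dot products are integers, $|\bar{m}_0 \cdot \vec{a}| \geq 1$. Setting $\bar{m} := N(R) \cdot \bar{m}_0$ then forces $|\bar{m} \cdot \vec{a}_i| \geq N(R)$, and provided $N(R)$ grows linearly in $R$ one obtains $\|\bar{m}\|_\infty = O(R^{n-1}) \cdot O(R) = O(R^n)$, as desired.

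The main obstacle is the quantitative content of Lemma \ref{Solie--Lemma--Padding for more general words}: to pin the final degree at exactly $n$ one needs the big-powers threshold $N(R)$ to depend at most \emph{linearly} on the word-length parameters (on $k$ and on $\max_i |g_i|_X$), with constants depending only on $G$, $X$, and $u$ and uniform in the particular tuple $(g_1,\dots,g_k)$ being padded. Any super-linear dependence would inflate the bound beyond $R^n$, so the strength of the padding lemma is precisely what pins down the degree. A secondary technical check is the uniform normal-form estimate, which must be carried out inside an iterated extension of centralizers (a relatively hyperbolic, not in general hyperbolic, ambient) rather than an honest hyperbolic group.
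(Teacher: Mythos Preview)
Your proposal is correct and is essentially the paper's argument: the paper's retraction $\Theta_{n,R}^{p}$, which fixes $G$ and sends $t_i \mapsto u^{p(2R+1)^{i-1}}$, is exactly your $\phi_{\bar m}$ with $\bar m_0 = (1,\,2R+1,\,\dots,\,(2R+1)^{n-1})$ taken as the explicit witness for Theorem~B and $p$ playing the role of your scaling factor $N(R)$. The one edge case you elide is when an end factor $g_0$ or $g_k$ lies in $\langle u\rangle$ (in particular when $w \in \langle u\rangle \times \mathbb{Z}^n$), where the exponent after absorption can drop by up to $O(R)$; the paper handles this by setting $N_3(R) = N_2(R) + R + 1$ rather than $N_2(R)$, which is still linear and leaves the degree at $n$.
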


Repeated application of Theorem C gives us our final main result, a bound on the discriminating complexity of an arbitrary iterated extension of centralizers over $\Gamma$.

\begin{ThmIEOC}[c.f. Theorem \ref{Solie--Theorem--IEOC has poly upper}]
  The $\Gamma$-discriminating complexity of an iterated extension of centralizers over $\Gamma$ is asymptotically dominated by a polynomial with degree equal to the product of the ranks of the extensions.
\end{ThmIEOC}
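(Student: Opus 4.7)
The plan is to proceed by induction on the number $k$ of extensions in a defining chain $\Gamma = G_0 \leq G_1 \leq \cdots \leq G_k$ for the given iterated extension of centralizers, where $G_{i+1}$ is a rank $n_{i+1}$ extension of a cyclic centralizer of $G_i$. The base case $k = 1$ is an immediate application of Theorem C, taking the ambient group to be $\Gamma$ itself; this yields a $\Gamma$-discriminating complexity bounded by a polynomial of degree $n_1$.

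For the inductive step, fix finite generating sets $X_j$ for each $G_j$ and $Y$ for $\Gamma$, and assume the $\Gamma$-discriminating complexity of $G_{k-1}$ is dominated by a polynomial $q_{k-1}(R)$. To $\Gamma$-discriminate $B_R(G_k, X_k) - 1$, I would first invoke Theorem C to obtain a homomorphism $\phi: G_k \rightarrow G_{k-1}$ of complexity $O(R^{n_k})$ discriminating this ball. Since each element of $X_k$ has image of $X_{k-1}$-length $O(R^{n_k})$, the image $\phi(B_R(G_k, X_k))$ is contained in $B_{O(R^{n_k + 1})}(G_{k-1}, X_{k-1})$. Next, I would apply the inductive hypothesis to obtain a homomorphism $\psi: G_{k-1} \rightarrow \Gamma$ of complexity $q_{k-1}(O(R^{n_k + 1}))$ discriminating this larger ball, and in particular discriminating $\phi(B_R(G_k, X_k)) - 1$. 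The composition $\psi \circ \phi$ therefore $\Gamma$-discriminates $B_R(G_k, X_k) - 1$, and its complexity is bounded above by the product of the two individual complexities, which remains polynomial in $R$.

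The main technical obstacle is the careful bookkeeping of polynomial degrees through the induction. If $d_k$ denotes the degree of the polynomial bound at stage $k$, the argument above yields a recurrence of the form $d_k = n_k + (n_k + 1)\, d_{k-1}$, equivalently $d_k + 1 = (n_k + 1)(d_{k-1} + 1)$. Unwinding this recurrence produces a closed-form expression whose leading-order term is $\prod_{i=1}^{k} n_i$, matching the claimed product-of-ranks bound on the degree. A minor subtlety is that the choice of defining chain is not canonical; any chain witnessing the group as an iterated extension of centralizers over $\Gamma$ will do, and the bound depends only on the multiset of ranks appearing in such a chain.
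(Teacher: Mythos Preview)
Your approach—induct along the chain and compose discriminating sequences—is exactly the paper's approach, and your bookkeeping up through the recurrence $d_k + 1 = (n_k + 1)(d_{k-1} + 1)$ is correct and more explicit than what the paper writes. The gap is in the last step. Unwinding your recurrence gives
\[
d_k \;=\; \prod_{i=1}^{k} (n_i + 1) \;-\; 1,
\]
and this \emph{is} the degree of the polynomial bound you have produced; it is not legitimate to discard all but the ``leading-order term'' $\prod n_i$. Concretely, if every extension has rank $n_i = 1$, the statement asserts degree $1$, while your argument yields degree $2^k - 1$. The extra growth comes from the fact that $\phi$ sends $B_R(G_k)$ into a ball of radius $O(R^{n_k+1})$ in $G_{k-1}$, so the exponent picks up a factor of $n_k + 1$, not $n_k$, at each stage.

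The paper's own proof is quite terse on exactly this point: it simply asserts that ``the properties of complexity imply that the degree of the polynomial is equal to the product of the ranks,'' without tracking the ball expansion you identified. So your composition argument matches the paper's and certainly delivers the qualitative conclusion (a polynomial bound), but neither your write-up nor the paper's one-line justification actually establishes the precise degree $\prod n_i$ as stated. If you want to claim that exact degree, you would need either a sharper composition lemma exploiting the retraction structure of the maps $\Theta_{n,R}^{p}$, or to weaken the conclusion to degree $\prod (n_i + 1) - 1$.
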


Theorem D then directly implies Theorem A via a theorem of Kharlampovich and Myasnikov, which states that every $\Gamma$-limit group embeds in some iterated extension of centralizers over $\Gamma$ \cite{Kharlampovich2009}. 

\section{Background}

Let $G$ be a group with a generating set $X$.

\begin{defn}[Cayley graph] \index{Cayley graph}
  \label{Defn--Cayley graph}
  The \emph{Cayley graph} of $G$ with respect to the generating set $X$, denoted $\Cay(G,X)$, is an oriented graph with vertex set in bijection with $G$.
  The edge set is in bijection with $G \times X$, where the pair $(g,x)$ corresponds to an edge having initial vertex $g$, terminal vertex $gx$, and label $x$.
\end{defn}

For a fixed set $X$, an \emph{$X$-word} is a finite sequence of elements of $X$.
By $X^*$ we denote the set of all $X$-words, including the empty word.
When $X$ is a generating set for a group $G$, then every element of $X^*$ represents an element of $G$.
Where it is necessary to distinguish between them, we will denote by $\overline{w}$ the element of $G$ represented by $w \in X^*$.

Recall that for an element $g \in G$, the \emph{word length with respect to $X$} or \emph{$X$-length}, of $g$, denoted $|g|_X$, is number of letters in the shortest $X$-word representing $g$.
Equivalently, $|g|_X$ is the number of edges in the shortest path from $1$ to $g$ in $\Cay(G,X)$.

For an integer $R \geq 0$, the \emph{ball of radius $R$ with respect to generating set $X$} is the set $B_R(G,X) = \{ g \in G : |g|_X \leq R \}$.
Where $G$ and $X$ are clear from context, we will denote this set simply by $B_R$.
Note that when $X$ is a finite set, then $B_R$ is also finite for any integer $R \geq 0$.

Finally, for elements $g,h \in G$, the \emph{right-conjugate of $h$ by $g$} is the element $h^g := g^{-1} h g$. 

\subsection{$\Gamma$-Limit Groups}

Sela first introduced the notion of a limit group in \cite{Sela2001} in his investigation of groups having the elementary theory of a non-Abelian free group.
Sela later generalized this notion to that of a $\Gamma$-limit group, where $\Gamma$ is some fixed torsion-free hyperbolic group \cite{Sela2009}.

\begin{defn}[Residual properties] \index{residual property} \index{discriminating homomorphism}
  \label{Defn--Residual properties}
  Fix a group $H$. We say that a group $G$ is \emph{residually $H$} if for any $g \in G-1$, there exists a homomorphism $\phi_g: G \rightarrow H$ such that $\phi_g(g) \neq 1$.
  A group $G$ is \emph{fully residually $H$} if for any finite set $S$ of nontrivial elements of $G$, there exists a homomorphism $\phi_S: G \rightarrow H$ such that $1 \notin \phi_S(S)$.
  The homomorphisms $\phi_g$ and $\phi_S$ are called \emph{$H$-discriminating homomorphisms} for $g$ and $S$, respectively.
\end{defn}

For the remainder of this chapter, $\Gamma$ will denote a non-Abelian, torsion-free hyperbolic group.

\begin{defn}[$\Gamma$-limit group \cite{Sela2009}] \index{$\Gamma$-limit group}
  \label{Defn--Gamma limit group}
  We say that a group $G$ is a \emph{$\Gamma$-limit group} if $G$ is finitely generated and fully residually $\Gamma$.
\end{defn}

A trivial example of a $\Gamma$-limit group is $\Gamma$ itself.
For a more complicated example, it is well-known that fundamental groups of closed, orientable hyperbolic surfaces are $F_2$-limit groups, where $F_2$ denotes the free group of rank two.

We may produce new $\Gamma$-limit groups from existing limit groups through a construction called an \emph{extension of a centralizer}.
Extensions of centralizers will provide the basis for our analysis of the residual properties of limit groups.

Let $G$ be a group, and given $g \in G$, let $C_G(u) = \{g \in G : u^g = u\}$ denote the centralizer of $u$ in $G$.

\begin{defn}[Extension of a centralizer \cite{Myasnikov1996}] \index{extension of a centralizer}
  \label{Defn--Extension of a centralizer}
  Suppose that for some $u \in G$, the centralizer $C=C_G(u)$ is Abelian and that $\phi: C \rightarrow A$ is injective for some Abelian group $A$. We call the amalgamated product
  \begin{displaymath}
    G(u,A) := \displaystyle{G *_{C = \phi(C)} A}
  \end{displaymath}
  the \emph{extension of the centralizer $C$ by $A$ with respect to $\phi$}.
  We will call the extension \emph{direct} if $A = \phi(C) \times B$ for some subgroup $B \leq A$.
  A direct extension is \emph{free of rank $n$} if $B \cong \mathbb{Z}^n$.
\end{defn}

Having given the most general definition, we will now assume that all extensions of centralizers are free and of finite rank.
We will omit reference to the homomorphism $\phi$ when it is clear from context.

The following proposition is well-known and will serve as the starting point for our investigation of the residual properties of $\Gamma$-limit groups.

\begin{prop}
  The extension of centralizer $G(u,A)$ is a $G$-limit group.
\end{prop}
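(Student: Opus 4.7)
The plan is to construct, for each vector of integer parameters $\mathbf{k} = (k_1, \ldots, k_n) \in \mathbb{Z}^n$, a test homomorphism $\Phi_\mathbf{k}: G(u,A) \to G$, and then to show that for any prescribed finite subset $S \subseteq G(u,A) - 1$ a sufficiently generic choice of $\mathbf{k}$ yields a $\Phi_\mathbf{k}$ that is $G$-discriminating for $S$. This mirrors the classical argument of Myasnikov--Remeslennikov for extensions of cyclic centralizers in free groups. Finite generation of $G(u,A)$ is immediate: a finite generating set for $G$ together with the free basis of $B \cong \mathbb{Z}^n$ generates $G(u,A)$.

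To build $\Phi_\mathbf{k}$, recall we have assumed $A = C \times B$ with $B \cong \mathbb{Z}^n$, and fix a free basis $b_1, \ldots, b_n$ of $B$. Define $\psi_\mathbf{k}: A \to C$ by $\psi_\mathbf{k}|_C = \id_C$ and $\psi_\mathbf{k}(b_i) = u^{k_i}$; this is a well-defined homomorphism since $C$ is abelian and the image lies in the abelian subgroup $C$. Because $\psi_\mathbf{k}$ agrees with the inclusion on the amalgamated subgroup $C$, the universal property of the amalgamated product $G *_C A$ extends $\id_G$ and $\psi_\mathbf{k}$ to the desired homomorphism $\Phi_\mathbf{k}: G(u,A) \to G$.

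Given a finite set $S \subseteq G(u,A) - 1$, I would then put each $w \in S$ into the standard normal form $w = x_0 y_1 x_1 y_2 \cdots y_\ell x_\ell$, with $y_i \in A \setminus C$, $x_i \in G \setminus C$ at intermediate positions, and the factors alternating between $G$ and $A$. Writing each $y_i = c_i b_1^{e_{i1}} \cdots b_n^{e_{in}}$ with $c_i \in C$ and exponent vector $(e_{i1},\ldots,e_{in}) \neq 0$, one computes
\begin{equation*}
  \Phi_\mathbf{k}(w) = x_0 \, c_1 u^{m_1(\mathbf{k})} \, x_1 \, c_2 u^{m_2(\mathbf{k})} \, x_2 \cdots c_\ell u^{m_\ell(\mathbf{k})} \, x_\ell,
\end{equation*}
where $m_i(\mathbf{k}) = \sum_j e_{ij} k_j$. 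Each $c_i$ lies in $C = C_G(u)$, hence commutes with $u$, so absorbing each $c_i$ into its neighboring $G$-factor rewrites the image as an alternating product $g_0 u^{m_1} g_1 u^{m_2} \cdots u^{m_\ell} g_\ell$ in $G$, and the normal form condition $x_i \notin C$ guarantees each intermediate $g_i = x_i c_{i+1}$ lies in $G \setminus C$, in particular outside $\langle u \rangle$.

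At this point I would invoke the big powers property for $G$ recalled in the introduction: for the finite collection of intermediate $G$-factors arising from $S$, there is a threshold $N$ such that any alternating product as above with every $|m_i| > N$ is nontrivial in $G$. The main obstacle is the final choice of $\mathbf{k}$: each $m_i(\mathbf{k})$ is a nonzero integer linear form in $\mathbf{k}$, and the set of $\mathbf{k}$ at which any of the finitely many forms arising from $S$ fails to exceed $N$ in absolute value lies in a finite union of thin hyperplane neighborhoods. Any $\mathbf{k}$ with sufficiently large coordinates lying off these ``bad'' regions (which is easy to arrange in $\mathbb{Z}^n$ by standard genericity) yields a $\Phi_\mathbf{k}$ that discriminates $S$, establishing that $G(u,A)$ is fully residually $G$.
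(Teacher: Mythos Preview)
The paper does not prove this proposition; it is stated as ``well-known'' background and no argument is given. Your outline supplies the standard Myasnikov--Remeslennikov argument and is essentially correct.

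Two small points deserve comment. First, you should explicitly dispose of the case $\ell = 0$ (i.e., $w \in G - 1$), where $\Phi_\mathbf{k}(w) = w \neq 1$ simply because $\Phi_\mathbf{k}|_G = \id_G$; no big-powers reasoning is needed there. Second, your appeal to the big powers property tacitly assumes that $G$ enjoys it. The proposition as literally stated carries no such hypothesis, but in the paper's setting $G$ is always $\Gamma$ or an iterated extension of centralizers over $\Gamma$, hence relatively hyperbolic with $u$ a hyperbolic element, and the big powers property is available (as the introduction notes, citing Kharlampovich--Myasnikov and Osin). So your proof is really a proof under the standing assumptions of the paper, which is the intended reading.

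It is worth observing that your retractions $\Phi_\mathbf{k}$ are exactly the qualitative precursors of the maps $\Theta_{n,R}^{p}$ that the paper constructs later for its quantitative estimates: there one takes $\mathbf{k} = \big(p, p(2R+1), \ldots, p(2R+1)^{n-1}\big)$ and controls the threshold $N$ explicitly via Lemma~3.7. So your approach is fully aligned with the paper's methods, just without the bookkeeping.
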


\begin{prop}[{\cite[Corollary 3]{Myasnikov1996}}]
  \label{Myasnikov-Remeslennikov--Prop--Maximal Abelian subgroups}
  A maximal Abelian subgroup of $G(u,A)$ is either conjugate to a subgroup of $G$, conjugate to $A$, or cyclic.
\end{prop}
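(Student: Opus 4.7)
I would argue via Bass--Serre theory using the splitting $G(u,A) = G \ast_C A$, so that $G(u,A)$ acts on its Bass--Serre tree $T$ with vertex stabilizers conjugate to $G$ or $A$ and edge stabilizers conjugate to $C$. Note at the outset that $C = C_G(u)$ is automatically \emph{maximal} Abelian in $G$: any Abelian subgroup containing $C \ni u$ must commute with $u$, hence lie in $C_G(u) = C$. Let $M$ be a maximal Abelian subgroup of $G(u,A)$; I would split into cases according to how $M$ acts on $T$.

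If $M$ has a fixed vertex $v$, then $M \leq \mathrm{Stab}(v)$. When $\mathrm{Stab}(v) = hGh^{-1}$, the conjugate $h^{-1}Mh$ is a subgroup of $G$, giving the first conclusion. When $\mathrm{Stab}(v) = hAh^{-1}$, the Abelianness of $A$ together with maximality of $M$ force $M = hAh^{-1}$, giving the second.

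If $M$ has no fixed vertex, I claim $M$ is infinite cyclic. Serre's property that pairwise intersecting subtrees of a tree have a common point rules out $M$ consisting entirely of elliptic elements with no common fixed point, so $M$ contains a hyperbolic element $m$ with axis $L \subset T$. Abelianness of $M$ forces every element of $M$ to preserve $L$, and an Abelian subgroup of $\mathrm{Isom}(L)$ (which is infinite dihedral) containing a nontrivial translation must be infinite cyclic; so the translation-length homomorphism yields a short exact sequence $1 \to K \to M \to \mathbb{Z} \to 1$, with $K$ the pointwise stabilizer of $L$ in $M$. At each $A$-type vertex of $L$ the two incident edges on $L$ have identical stabilizers, because $A$ is Abelian and conjugation by $A$ preserves $C$ setwise; chaining along $L$, the kernel $K$ is contained in a single conjugate $hCh^{-1}$. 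If $k \in K$ were nontrivial, a standard amalgamated-product centralizer computation (using that $C$ is maximal Abelian and self-centralizing in $G$) would give $C_{G(u,A)}(k) = hAh^{-1}$, an elliptic subgroup; but $m$ centralizes $k$, forcing $m \in hAh^{-1}$ and contradicting that $m$ is hyperbolic. Hence $K = 1$ and $M \cong \mathbb{Z}$.

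The main obstacle is this last centralizer calculation. Establishing $C_{G(u,A)}(k) = hAh^{-1}$, rather than a larger amalgamated product which could contain hyperbolic elements, requires $C_G(k) = C$ for every nontrivial $k \in C$ — i.e., self-centralization of $C$ at each of its nontrivial elements. For the groups $G$ to which the proposition will be applied (iterated extensions of centralizers over a torsion-free hyperbolic $\Gamma$, as introduced in Definition \ref{Defn--Extension of a centralizer}) this follows from the CSA property inherited along the iteration, which ensures that every maximal Abelian subgroup is malnormal. This is the sole step in which structural properties of $G$ beyond the defining hypotheses enter the argument; everything else is pure Bass--Serre bookkeeping.
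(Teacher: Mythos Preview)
The paper gives no proof of this proposition; it is quoted from \cite{Myasnikov1996} without argument, so there is nothing in the paper itself to compare your attempt against.

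Your Bass--Serre approach is the standard one and is essentially correct. You have also correctly isolated the one hypothesis needed beyond the bare definition: the centralizer computation $C_{G(u,A)}(k)=hAh^{-1}$ requires $C$ to be malnormal in $G$, which in the paper's setting is supplied by the CSA property (Proposition~\ref{Myasnikov-Remeslennikov--Prop--CSA Properties}), and indeed the cited source \cite{Myasnikov1996} works in the CSA category throughout. One minor technical point worth tightening: your appeal to the Helly property of trees to exclude an all-elliptic $M$ with no global fixed point is only immediate when $M$ is finitely generated; an arbitrary Abelian group of elliptic isometries could in principle fix an end rather than a vertex. But this residual case is dispatched by the very same centralizer computation you set up---any nontrivial $m\in M$ fixes some edge, hence lies in some $hCh^{-1}$, hence $M\subseteq C_{G(u,A)}(m)=hAh^{-1}$ fixes a vertex after all---so the gap closes with no new ideas.
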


\begin{defn}[Iterated extension of centralizers] \index{iterated extension of centralizers}
  \label{Defn--Iterated extension of centralizers}
  Let $G$ be a group. An \emph{iterated extension of centralizers over $G$} is a group $H$ for which there exists a finite series
  \begin{displaymath}
    G= G_0 \leq G_1 \leq \dots \leq G_k = H
  \end{displaymath}
  such that for $i=0, \dots, k-1$, each $G_{i+1}$ is an extension of a centralizer of $G_i$.
\end{defn}

Since each $G_{i+1}$ is fully residually $G_i$, we immediately obtain the following:

\begin{prop}
  An iterated extension of centralizers over $G$ is fully residually $G$.
\end{prop}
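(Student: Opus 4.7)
The plan is a straightforward induction on the length $k$ of the subgroup series witnessing that $H$ is an iterated extension of centralizers over $G$, leveraging the fact that being \emph{fully residually} is transitive in a suitable sense. The hint already given in the excerpt -- that each $G_{i+1}$ is fully residually $G_i$ via the preceding proposition on single extensions of centralizers -- reduces everything to chaining discriminating homomorphisms.

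First I would establish the transitivity lemma: if $K$ is fully residually $L$ and $L$ is fully residually $G$, then $K$ is fully residually $G$. Given a finite set $S \subseteq K - 1$, apply the hypothesis on $K$ to obtain a homomorphism $\phi \colon K \to L$ with $1 \notin \phi(S)$; then $\phi(S)$ is a finite subset of $L - 1$, so applying the hypothesis on $L$ yields a homomorphism $\psi \colon L \to G$ with $1 \notin \psi(\phi(S))$. The composition $\psi \circ \phi \colon K \to G$ discriminates $S$.

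Next I would induct on $k$. For the base case $k=0$, we have $H = G$, and the identity map discriminates every finite subset of $G - 1$. For the inductive step, assume every iterated extension of centralizers of length $k-1$ over $G$ is fully residually $G$, and let $G = G_0 \leq G_1 \leq \cdots \leq G_k = H$ be the series. By the inductive hypothesis applied to the series $G_0 \leq \cdots \leq G_{k-1}$, the group $G_{k-1}$ is fully residually $G$. The proposition on single extensions of centralizers (stated just before Definition \ref{Defn--Iterated extension of centralizers}) tells us that $G_k = H$, being an extension of a centralizer of $G_{k-1}$, is fully residually $G_{k-1}$. Applying the transitivity lemma with $K = H$ and $L = G_{k-1}$ completes the induction.

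There is no real obstacle here -- the content sits entirely in the single-step proposition that an extension of a centralizer $G'(u,A)$ is a $G'$-limit group, which is quoted. Everything else is bookkeeping: one small composition argument and an induction on the number of steps in the series defining $H$.
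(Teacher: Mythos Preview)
Your proof is correct and matches the paper's approach exactly: the paper simply asserts the proposition follows ``immediately'' from the fact that each $G_{i+1}$ is fully residually $G_i$, and your write-up just makes explicit the transitivity-of-full-residuality lemma and the induction on $k$ that this entails.
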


The following theorem of Kharlampovich and Myasnikov will allow us to approach the residual properties of arbitrary $\Gamma$-limit groups by considering iterated extensions of centralizers.

\begin{prop}[{\cite[Theorems D, E]{Kharlampovich2009}}]
  \label{Kharlampovich-Myasnikov--Prop--Embedding theorem for limit groups}
  Every $\Gamma$-limit group embeds into some iterated extension of centralizers over $\Gamma$.
\end{prop}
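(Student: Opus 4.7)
The plan is to proceed by induction on the height of a strict Makanin–Razborov resolution of a $\Gamma$-limit group. I would first invoke the structural result that every $\Gamma$-limit group $G$ admits a finite resolution $G = G_0 \twoheadrightarrow G_1 \twoheadrightarrow \cdots \twoheadrightarrow G_h = \Gamma$ in which each $G_i$ carries a cyclic abelian JSJ decomposition over $G_{i+1}$ whose vertex groups are of three types: rigid vertices (which embed into $G_{i+1}$), abelian vertices (free abelian groups attached along a maximal cyclic subgroup of their neighbors), and quadratically hanging (QH) vertices, which are surface-type. The existence of such a resolution for $\Gamma$-limit groups over a torsion-free hyperbolic $\Gamma$ would be taken as the structural input.

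For the inductive step, I would assume that $G_{i+1}$ already embeds into some iterated extension of centralizers $\tilde{G}_{i+1}$ over $\Gamma$ and build an iterated extension $\tilde{G}_i$ of centralizers over $\tilde{G}_{i+1}$ containing $G_i$. I would process the vertices of the JSJ decomposition of $G_i$ in an order prescribed by a spanning tree of the underlying graph of groups, handling one vertex group at a time. For an abelian vertex $A$ attached to its neighbors along a maximal cyclic subgroup $\langle u \rangle$, the step is tautological: adjoining $A$ is, by Definition \ref{Defn--Extension of a centralizer}, a free extension of the centralizer $C_{\tilde{G}_{i+1}}(u)$, and Proposition \ref{Myasnikov-Remeslennikov--Prop--Maximal Abelian subgroups} guarantees inductively that this centralizer remains cyclic along the tower. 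Rigid vertices and edge groups are handled by the inductive hypothesis, so consistency of the gluing reduces to verifying that each edge element is already in (or commutes with the image of) the ambient centralizer used at that step.

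The main obstacle is the QH vertex groups, which are surface groups and cannot be directly realized as centralizer extensions. My strategy is to establish an independent lemma: a surface group of Euler characteristic $-m$, together with its boundary subgroups, embeds into an iterated extension of centralizers of an appropriate free subgroup of $\tilde{G}_{i+1}$, with the embedding restricting to a prescribed assignment on the boundary. To prove this lemma, I would write the surface relation $[a_1,b_1]\cdots[a_g,b_g] = c_1 \cdots c_k$ as a triangular system of quadratic equations and resolve each equation, one at a time, by a single extension of a cyclic centralizer, supplying the commuting powers needed to realize each commutator. Faithfulness at each stage would be verified using the big-powers property (the motivation for Lemma \ref{Solie--Lemma--Padding for more general words}) to separate the images of distinct syllables.

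The base case $G_h = \Gamma$ is immediate, and chaining the inductive embeddings produces $G \hookrightarrow \tilde{G}_0$ with $\tilde{G}_0$ an iterated extension of centralizers over $\Gamma$. The most delicate point I anticipate is the boundary-compatibility of the QH embedding: the surface-group resolution inside $\tilde{G}_{i+1}$ must restrict to the embeddings already chosen for the boundary-parallel cyclic subgroups where the QH vertex is glued to its neighbors. I would address this by fixing these boundary data at the outset and performing the quadratic resolution relatively, treating the boundary images as parameters rather than unknowns, so that the successive centralizer extensions never disturb the already-embedded pieces of $G_i$.
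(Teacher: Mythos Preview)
The paper does not prove this proposition at all: it is stated in the background section with the citation \cite[Theorems D, E]{Kharlampovich2009} and used as a black box. There is therefore no ``paper's own proof'' to compare your proposal against; the authors rely entirely on Kharlampovich--Myasnikov for this embedding theorem.

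Your outline is in the spirit of the Kharlampovich--Myasnikov argument (Makanin--Razborov resolutions, JSJ decompositions, treating rigid, abelian, and QH vertices separately), so as a reconstruction of the cited result it is on the right track. A few cautions, though. First, the existence of a strict resolution with the precise properties you assume is itself a substantial theorem over a hyperbolic $\Gamma$, not something you can simply ``invoke''; in the Kharlampovich--Myasnikov framework it is the output of their elimination process, and in Sela's it is the conclusion of the shortening argument. Second, your handling of QH vertices is too optimistic: writing the surface relation as a triangular system and resolving one commutator at a time does not by itself give an \emph{embedding} of the surface group, only a homomorphism; faithfulness requires a careful normal-form argument in the amalgamated product at each stage, and the big-powers property alone is not enough to certify injectivity on the whole surface group. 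Third, you cite Lemma~\ref{Solie--Lemma--Padding for more general words} as motivation, but note that this lemma is proved \emph{after} the proposition in question and for a different purpose (quantifying the discriminating complexity), so it cannot be used here without circularity.
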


Recall that a subgroup $H \leq G$ is \emph{malnormal} if $H \cap H^g = 1$ for all $g \in G-H$.

\begin{defn}[CSA group \cite{Myasnikov1996}] \index{CSA group}
  \label{Defn--CSA}
  A group $G$ is called a \emph{CSA-group} if every maximal Abelian subgroup of $G$ is malnormal. $G$ is called a \emph{CSA*-group} if it is a CSA-group and has no elements of order 2.
\end{defn}

We summarize some of the important properties of CSA- and CSA*-groups.

\begin{prop}[\cite{Myasnikov1996}]
  \label{Myasnikov-Remeslennikov--Prop--CSA Properties}
  \mbox{}
  \begin{enumerate}
    \item
      Any torsion-free hyperbolic group is a CSA*-group.
    \item
      The class of CSA*-groups is closed under iterated extensions of centralizers.
    \item
      Let $G$ be a CSA-group and let $A \leq G$ be a maximal Abelian subgroup.
      Then there is $u \in G$ for which $A = C_G(u)$.
    \item
      Let $G$ be a CSA-group. For any maximal Abelian subgroup $A$, $N_G(A) = A$.
    \item
      Let $G$ be a CSA-group. Then commutativity is a transitive relation on the set $G-1$.
  \end{enumerate}
\end{prop}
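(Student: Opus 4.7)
The plan is to treat the five items separately, starting with the algebraic consequences of malnormality in (3)--(5), which are essentially formal, and then addressing the structural items (1) and (2).

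For items (3)--(5), the CSA hypothesis does almost all of the work. For (3), given a maximal Abelian subgroup $A \leq G$ and $u \in A - 1$, I note that $A \leq C_G(u)$ automatically; for any $g \in C_G(u)$ the element $u$ lies in $A \cap gAg^{-1}$, so malnormality forces $g \in A$, giving $A = C_G(u)$. For (4), any $g \in N_G(A)$ satisfies $gAg^{-1} = A$, so $A \cap gAg^{-1} = A$ is nontrivial and malnormality forces $g \in A$. For (5), if $a,b,c \in G - 1$ with $[a,b] = [b,c] = 1$, I enlarge $\{b\}$ to a maximal Abelian subgroup $A$; by (3), $C_G(b) = A$, so $a,c \in A$, hence $[a,c] = 1$.

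For (1), the key ingredient is that in a torsion-free hyperbolic group the centralizer of any nontrivial element is infinite cyclic, so every maximal Abelian subgroup has the form $A = \langle a \rangle$. To verify malnormality, I would suppose $A \cap A^g \neq 1$, so $a^n = (g^{-1}ag)^m$ for some $m,n \neq 0$. Since these elements commute and sit in an infinite cyclic centralizer, a common-root argument shows $g^{-1}ag$ and $a$ themselves commute, so $g \in C_\Gamma(a) = A$. Torsion-freeness then precludes any element of order $2$, so $\Gamma$ is in fact CSA*.

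For (2), induction reduces the claim to showing that a single extension of a centralizer $G(u,A)$ of a CSA*-group $G$ is again CSA*. Torsion-freeness is preserved under amalgamation over a torsion-free Abelian group. For malnormality, I would invoke Proposition \ref{Myasnikov-Remeslennikov--Prop--Maximal Abelian subgroups} to reduce to three cases: every maximal Abelian subgroup of $G(u,A)$ is conjugate to a maximal Abelian subgroup of $G$, conjugate to $A$, or infinite cyclic. In each case I would check malnormality via the normal form theorem for amalgamated products, using that the amalgamated subgroup $C = C_G(u)$ is malnormal in $G$ by hypothesis and that $A$ is Abelian and self-centralizing in $G(u,A)$; a conjugator $g$ realizing a nontrivial intersection $M \cap M^g$ of normal-form length $\geq 1$ forces a collapse in the normal form that contradicts the CSA property of $G$ or the structure of $A$.

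The main obstacle is item (2): the case analysis on maximal Abelian subgroups of an amalgam is the only place where genuine work is required, since the remaining items are either formal consequences of malnormality or reduce to known structural facts about centralizers in torsion-free hyperbolic groups.
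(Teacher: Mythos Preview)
The paper does not prove this proposition at all: it is stated with a citation to \cite{Myasnikov1996} and used as a black box. So there is no ``paper's proof'' to compare against; your proposal is a genuine proof sketch where the paper offers none.

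Your arguments for (3)--(5) are the standard ones and are correct. For (3), note that your proof actually shows the stronger statement that \emph{every} nontrivial $u \in A$ satisfies $A = C_G(u)$, which is what you then use in (5).

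For (1), the outline is right but the ``common-root argument'' deserves one more sentence. From $a^n = (g^{-1}ag)^m$ you get that $a$ and $g^{-1}ag$ both lie in the infinite cyclic group $C_\Gamma(a^n)$, hence commute, hence $g^{-1}ag \in C_\Gamma(a) = \langle a \rangle$. To conclude $g \in \langle a \rangle$ you still need to rule out $g^{-1}ag = a^{-1}$; this is where torsion-freeness enters again (e.g.\ $g^2$ would centralize $a$, so $g^2 = a^k$, and then $\langle g, a\rangle$ is a torsion-free quotient of an infinite dihedral group, forcing it to be cyclic). Alternatively one invokes stable translation length. Either way this is routine, but as written the step is slightly elided.

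For (2) your plan is exactly the one carried out in \cite{Myasnikov1996}: reduce to a single extension, classify maximal Abelian subgroups via Proposition~\ref{Myasnikov-Remeslennikov--Prop--Maximal Abelian subgroups}, and verify malnormality case by case using normal forms in the amalgam and the malnormality of $C_G(u)$ in $G$. You are right that this is where the real work lies; the normal-form case analysis is not long but does require care, and your sketch correctly identifies the ingredients without spelling them out.
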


\subsection{Relative hyperbolicity}

The following discussion is taken from Osin \cite{Osin2006} with some minor modifications to notation inspired by Hruska \cite{Hruska2010}.

By a pair $(G, \mathbb{P})$ we denote a group $G$ with a distinguished set of subgroups $\mathbb{P} = \{P_\lambda\}_{\lambda \in \Lambda}$.
A subgroup $H \leq G$ is called \emph{parabolic} if it is conjugate into some $P \in \mathbb{P}$, and \emph{hyperbolic} otherwise.
We call the conjugates of the elements of $\mathbb{P}$ \emph{maximal parabolic subgroups}.

\begin{defn}[Relative generating set] \index{relative generating set}
  \label{Defn--Relative generating set}
  Let $\mathcal{P} = \displaystyle{\bigcup_{\lambda \in \Lambda} (P_\lambda-\{1\})}$. We say that $X \subseteq G$ is a \emph{relative generating set for $(G, \mathbb{P})$} if $G$ is generated by $X \cup \mathcal{P}$. If $X$ is finite, we call it a \emph{finite relative generating set}.
\end{defn}

\begin{defn}[Relative presentation] \index{relative presentation}
  \label{Defn--Relative presentation}
  We may consider $G$ as a quotient of the group
  \begin{displaymath}
    F := \displaystyle{( \ast_{\lambda \in \Lambda} P_\lambda )} * F(X),
  \end{displaymath}
  where $F(X)$ is the free group with basis $X$.
  Note that the group $F$ is generated by $\xup$.

  For each $\lambda \in \Lambda$, let $S_\lambda$ denote all the words in $(P_\lambda - 1)^*$ which represent the identity in $P_\lambda$.
  Further denote
  \begin{displaymath}
    \mathcal{S} := \displaystyle{\bigcup_{\lambda \in \Lambda} S_\lambda}.
  \end{displaymath}

  Let $\mathcal{R} \subseteq (\xup)^*$ be such that the normal closure of $\mathcal{R}$ generates the kernel of the homomorphism $F \rightarrow G$.
  We say that $(G, \mathbb{P})$ has the \emph{relative presentation}
  \begin{equation}
    \label{Eqn--Relative presentation}
    \langle X, \mathcal{P} \: | \: \mathcal{R}, \mathcal{S} \rangle.
  \end{equation}
  If $X$ and $\mathcal{R}$ are finite, then we say that the relative presentation \eqref{Eqn--Relative presentation} is \emph{finite}.
  If $(G, \mathbb{P})$ has a finite relative presentation, we say that $(G,\mathbb{P})$ is \emph{finitely relatively presented}.
\end{defn}

Suppose that $(G,\mathbb{P})$ has a relative presentation as in \eqref{Eqn--Relative presentation}.
If $W \in (\xup)^*$ represents the identity in $G$, then there is an expression
\begin{equation}
  \label{Eqn--Product of conjugates}
  W =_F \displaystyle{\prod_{i=1}^k R_i^{f_i}}
\end{equation}
with equality in the group $F$ and such that $R_i \in \mathcal{R}$ and $f_i \in F$ for each $i$.

\begin{defn}[Relative isoperimetric function] \index{relative isoperimetric function}
  \label{Defn--Relative isoperimetric function}
  Let $\theta: \mathbb{N} \rightarrow \mathbb{N}$.
  We say that $\theta$ is a \emph{relative isoperimetric function} for $(G, \mathbb{P})$ if there exists a finite relative presentation with $X$ and $\mathcal{R}$ as above such that for any $W \in (\xup)^*$ with $|W|_\xup \leq n$, there exists an expression of the form \eqref{Eqn--Product of conjugates} such that $k \leq \theta(n)$.
\end{defn}

\begin{defn}[Relative Dehn function] \index{relative Dehn function}
  \label{Defn--Relative Dehn function}
  We call the smallest relative isoperimetric function for a relative presentation the \emph{relative Dehn function} of that relative presentation.
  If a relative presentation has no finite relative isoperimetric function, then we say that the relative Dehn function for that relative presentation is not well-defined.
\end{defn}

\begin{defn}[Relatively hyperbolic group] \index{relatively hyperbolic group}
  \label{Defn--Relatively hyperbolic group}
  We say that $(G, \mathbb{P})$ is a \emph{relatively hyperbolic group} if $(G, \mathbb{P})$ has a finite relative presentation with a well-defined, linear relative Dehn function.
\end{defn}

We will now fix a non-Abelian, torsion-free hyperbolic group $\Gamma$.
Our goal is next to show that an iterated extension of centralizers over $\Gamma$ is hyperbolic relative to its maximal non-cyclic Abelian subgroups.
We begin by noting the following results which may both be found in \cite{Dahmani2003}.

\begin{prop}[\cite{Dahmani2003}]
  \label{Dahmani--Prop--Hyperbolic cyclic subgroups can be considered parabolic}
  Let $(G, \mathbb{P})$ be a torsion-free relatively hyperbolic group. Let $U$ be a cyclic hyperbolic subgroup such that $N_G(U)=U$.
  Then $(G, \mathbb{P}\cup\{U\})$ is also a torsion-free relatively hyperbolic group.
\end{prop}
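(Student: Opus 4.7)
The plan is to verify directly that $(G, \mathbb{P} \cup \{U\})$ satisfies Osin's definition of a relatively hyperbolic group: a finite relative presentation with linear relative Dehn function.

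A finite relative presentation is immediate. Starting from a finite relative presentation $\langle X, \mathcal{P} \mid \mathcal{R}, \mathcal{S}\rangle$ of $(G, \mathbb{P})$, the same $X$ and $\mathcal{R}$, with $\mathcal{S}$ enlarged by the (finitely encodable) trivial words over $U-1$, presents $(G, \mathbb{P} \cup \{U\})$. Next, I would record the almost-malnormality of $U$ in $G$. Torsion-freeness forces any two infinite cyclic subgroups with nontrivial intersection to be commensurable, so $N_G(U) = U$ gives $U \cap U^g = 1$ for all $g \notin U$. For any $P \in \mathbb{P}$ and $g \in G$, $U \cap P^g = 1$ as well: otherwise a nontrivial power of the generator of $U$ would lie in $P^g$, and by almost-malnormality of $P^g$ (together with torsion-freeness of $G$) the generator itself would lie in $P^g$, making $U$ parabolic and contradicting the hypothesis.

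The heart of the argument is the linear isoperimetric inequality. Given $W \in (X \cup \mathcal{P} \cup (U-1))^*$ of relative length at most $n$ representing $1 \in G$, factor $W = v_0 u_1 v_1 \cdots u_k v_k$ with each $u_i \in U-1$ a single letter and each $v_j \in (X \cup \mathcal{P})^*$, and replace each $u_i$ by a word $w_i \in (X \cup \mathcal{P})^*$ representing the same element of $U$. This produces a word $W' \in (X \cup \mathcal{P})^*$ still representing $1$. The key estimate is $\sum_i |w_i|_{X \cup \mathcal{P}} = O(n)$. Once it is in hand, the linear relative Dehn function of $(G, \mathbb{P})$ gives a linear bound on the number of $\mathcal{R}$-conjugates needed to reduce $W'$ to the identity; converting each replacement $u_i \leftrightarrow w_i$ back into a derivation for $(G, \mathbb{P} \cup \{U\})$ costs a bounded number of relators per letter of $w_i$, contributing $O(n)$ in total.

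The main obstacle is the bound $\sum_i |w_i|_{X \cup \mathcal{P}} = O(n)$: it says that \emph{unconing} each $U$-edge of the loop labelled by $W$ in $\Cay(G, X \cup \mathcal{P} \cup (U-1))$ to an $(X \cup \mathcal{P})$-path in $\Cay(G, X \cup \mathcal{P})$ increases the total length only linearly. This is a form of Farb's bounded coset penetration property applied to the enlarged parabolic family, and its proof relies precisely on the two-endedness and quasi-convexity of $U$ inside the hyperbolic coned-off Cayley graph $\Cay(G, X \cup \mathcal{P})$ together with the almost-malnormality conditions established above. I would import this estimate as a black box via either Farb's or Bowditch's characterization of relative hyperbolicity, applied to the $G$-action on $\Cay(G, X \cup \mathcal{P})$ after equivariantly coning off the $U$-cosets.
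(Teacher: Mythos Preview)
The paper does not give a proof of this proposition at all: it is stated with the citation \cite{Dahmani2003} and used as a black box. So there is no ``paper's own proof'' to compare against; the author simply imports the result from Dahmani.

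Your sketch is a reasonable outline of how one might re-derive the result within Osin's framework, but note that the step you flag as the heart of the matter---the estimate $\sum_i |w_i|_{X\cup\mathcal{P}} = O(n)$, i.e.\ bounded coset penetration for the enlarged peripheral structure---is essentially the entire content of the theorem, and you end up importing it as a black box from Farb or Bowditch anyway. In that sense your argument does not really prove more than the paper's bare citation does; it just relocates the citation. If you want a self-contained proof, that estimate is exactly what needs to be established, and it is nontrivial: one must use that $U$ is undistorted and quasiconvex in $\Cay(G,X\cup\mathcal{P})$ (Osin's Proposition~4.19) together with the almost-malnormality to bound how long a relative geodesic can track a coset of $U$.

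One small wrinkle in your malnormality paragraph: the implication ``$U\cap U^g\neq 1$ and $N_G(U)=U$ force $g\in U$'' is not purely a consequence of torsion-freeness and commensurability. You need the additional fact, specific to hyperbolic elements in relatively hyperbolic groups, that the maximal elementary subgroup $E(u)$ containing $u$ is the unique maximal virtually cyclic subgroup containing any nontrivial power of $u$ (Osin, \S4 of \cite{Osin2006}); in the torsion-free case this makes $E(u)$ the \emph{unique} maximal cyclic subgroup containing $u$, so $U\cap U^g\neq 1$ forces $U$ and $U^g$ to both lie in $E(u)$ and hence to coincide. Without invoking this, two commensurable cyclic subgroups need not be equal or conjugate by an element of the normalizer.

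For comparison, Dahmani's original argument in \cite{Dahmani2003} does not go through Osin's isoperimetric definition: it uses the Bowditch (or Yaman) characterization via geometrically finite convergence actions on the boundary, where adding $U$ to the peripheral structure corresponds to declaring the fixed points of $U$ on $\partial(G,\mathbb{P})$ to be a new bounded parabolic point.
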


\begin{prop}[\cite{Dahmani2003}]
  \label{Dahmani--Prop--Combination Theorem for Relatively Hyperbolic Groups}
  Let $(G_1, \mathbb{P}_1)$ and $(G_2, \mathbb{P}_2)$ be relatively hyperbolic groups.
  Let $P \in \mathbb{P}_1$, and suppose that $P$ is isomorphic to a parabolic subgroup of $(G_2, \mathbb{P}_2)$.
  Let $G = G_1 *_P G_2$.
  Then $(G, (\mathbb{P}_1-\{P\})\cup\mathbb{P}_2))$ is relatively hyperbolic.
\end{prop}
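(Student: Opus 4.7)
The plan is to verify directly that the amalgamated product $G = G_1 *_P G_2$ admits a finite relative presentation with parabolic collection $\mathbb{P} := (\mathbb{P}_1 - \{P\}) \cup \mathbb{P}_2$ whose relative Dehn function is linear; by Definition \ref{Defn--Relatively hyperbolic group} this suffices to establish relative hyperbolicity. Note that $P$ remains legitimately parabolic in $G$ because, by hypothesis, $P$ is isomorphic to a parabolic subgroup of $(G_2,\mathbb{P}_2)$, so every conjugate of $P$ in $G$ is conjugate into some element of $\mathbb{P}_2$.

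First I would assemble the presentation. Starting from finite relative presentations $\langle X_i, \mathcal{P}_i \mid \mathcal{R}_i, \mathcal{S}_i \rangle$ for $(G_i, \mathbb{P}_i)$, take $X := X_1 \cup X_2$ and $\mathcal{R} := \mathcal{R}_1 \cup \mathcal{R}_2$, augmented by finitely many ``identification'' relators equating the two embedded copies of a fixed finite generating set for $P$. The normal form theorem for amalgamated products guarantees that $\langle X, \mathcal{P} \mid \mathcal{R}, \mathcal{S} \rangle$ is a relative presentation of $(G, \mathbb{P})$, finite in the sense of Definition \ref{Defn--Relative presentation}.

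Next, given a word $W \in (X \cup \mathcal{P})^*$ of length $n$ with $\overline{W} = 1$ in $G$, decompose $W$ into maximal syllables $W = W_1 W_2 \cdots W_\ell$ alternating between $G_1$ and $G_2$. The Bass--Serre tree of the splitting $G = G_1 *_P G_2$ associates to $W$ a closed combinatorial loop; the normal form theorem then forces the existence of an innermost ``pinch,'' i.e.\ a syllable $W_j$ which represents an element of the amalgamating subgroup $P$. I would fill $W_j$ using the linear relative Dehn function of the factor containing it, at cost $O(|W_j|)$, and replace it by a $P$-word of controlled length so that the two adjacent syllables can merge into a single syllable in the other factor. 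Iterating this pinching procedure eventually collapses $W$ to a word in a single factor, which is then filled via one last application of that factor's linear relative Dehn function.

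The main obstacle, and the technical heart of Dahmani's argument, is controlling the accumulation of area across the pinches: a na\"{i}ve substitution can lengthen the word at each step, producing a polynomial blow-up. The essential observation is that each letter of the original $W$ is consumed by at most one pinch, and each pinch introduces only a bounded-length $P$-substitute whose length is controlled by the factor's linear relative Dehn function applied to the syllable it replaces. Therefore the total area accumulates linearly in $n$, yielding a linear relative isoperimetric inequality and hence the relative hyperbolicity of $(G, \mathbb{P})$.
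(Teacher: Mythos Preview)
The paper does not prove this proposition; it is quoted from \cite{Dahmani2003} and used as a black box, so there is no in-paper argument to compare against. It is worth noting, however, that Dahmani's own proof does \emph{not} proceed via Osin's relative Dehn function. His combination theorem is established in the dynamical/geometric setting (convergence actions on compacta, equivalently Bowditch's fine-graph formulation), by gluing the Bowditch boundaries of the factors along the limit set of the amalgamated parabolic. Your description of ``the technical heart of Dahmani's argument'' as pinch-area bookkeeping is therefore a mischaracterisation of the source.

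Your Dehn-function strategy is a legitimate alternative, but the sketch has a real soft spot. After pinching a $G_1$-syllable $W_j$ representing some $p\in P$, you must replace it by a word in the \emph{new} relative alphabet, from which $P$-letters have been removed. You assert the substitute has bounded length; that is true because $P$ is conjugate into some $Q\in\mathbb{P}_2$ by a fixed $g\in G_2$, so $p=g^{-1}qg$ with $q$ a single peripheral letter---but this needs to be said, and it depends on $P$ being a \emph{full} peripheral on the $G_1$ side while merely parabolic on the $G_2$ side. More seriously, the finite relator set $\mathcal{R}_1$ for $(G_1,\mathbb{P}_1)$ is written using $P$-letters, which no longer belong to the alphabet of $(G,\mathbb{P})$. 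You need to argue that rewriting each such letter via its bounded-length $G_2$-expression still yields a finite relative presentation, and that filling a $G_1$-syllable in the \emph{rewritten} presentation costs area linearly comparable to filling it in the original $(G_1,\mathbb{P}_1)$-presentation. Without this, the step ``fill $W_j$ using the linear relative Dehn function of the factor'' is not justified in the presentation you actually built. Once that bridge is in place, your accumulation argument (each original letter is charged to exactly one pinch, plus a bounded additive cost per pinch) does give a linear bound.
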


\begin{cor}
  \label{Solie--Cor--Gamma limit groups are relatively hyperbolic}
  An iterated extension of centralizers over a torsion-free hyperbolic group $\Gamma$ is hyperbolic relative a set of representatives of conjugacy classes of maximal non-cyclic Abelian subgroups.
\end{cor}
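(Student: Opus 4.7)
The plan is to induct on the length $k$ of the centralizer series $\Gamma = G_0 \leq G_1 \leq \dots \leq G_k$. For the base case $G_0 = \Gamma$, the group is hyperbolic and so trivially relatively hyperbolic with empty peripheral family; since $\Gamma$ is torsion-free hyperbolic, every Abelian subgroup is cyclic, so the claim holds vacuously.

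For the inductive step I would assume that $(G_i, \mathbb{P}_i)$ is relatively hyperbolic, with $\mathbb{P}_i$ a set of representatives of conjugacy classes of maximal non-cyclic Abelian subgroups of $G_i$, and write $G_{i+1} = G_i *_C A$ where $C = C_{G_i}(u)$ and $A \cong C \times \mathbb{Z}^n$. The group $A$ is Abelian and hence (trivially) hyperbolic relative to $\{A\}$, which provides the target parabolic on the other side of the amalgam. I would then split on whether $C$ is cyclic. If $C$ is non-cyclic, then the CSA property of $G_i$ (inherited from $\Gamma$ via Proposition \ref{Myasnikov-Remeslennikov--Prop--CSA Properties}) makes $C$ maximal Abelian, so up to conjugacy $C$ coincides with some $P \in \mathbb{P}_i$, and an immediate application of Proposition \ref{Dahmani--Prop--Combination Theorem for Relatively Hyperbolic Groups} produces the relatively hyperbolic pair $(G_{i+1}, (\mathbb{P}_i - \{P\}) \cup \{A\})$.

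If instead $C$ is cyclic, I first need to promote $C$ to parabolic status in $G_i$ using Proposition \ref{Dahmani--Prop--Hyperbolic cyclic subgroups can be considered parabolic}. This requires verifying that $C$ is hyperbolic in $(G_i, \mathbb{P}_i)$ and that $N_{G_i}(C) = C$. The self-normalizing property is immediate from the CSA structure of $G_i$ (part (4) of Proposition \ref{Myasnikov-Remeslennikov--Prop--CSA Properties}); hyperbolicity follows by contradiction, since if a conjugate of $C$ were contained in a non-cyclic Abelian $P \in \mathbb{P}_i$, then $P$ would centralize a generator of $C$, contradicting that $C$ is its own centralizer in $G_i$. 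Hence $(G_i, \mathbb{P}_i \cup \{C\})$ is relatively hyperbolic, and the combination theorem applied to the amalgam along $C$ then gives $(G_{i+1}, \mathbb{P}_i \cup \{A\})$.

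The final step is to check that the peripheral family produced in either case is a set of representatives of conjugacy classes of maximal non-cyclic Abelian subgroups of $G_{i+1}$. Here Proposition \ref{Myasnikov-Remeslennikov--Prop--Maximal Abelian subgroups} does most of the work, classifying each maximal Abelian subgroup of $G_{i+1}$ as conjugate into $G_i$, conjugate to $A$, or cyclic; it remains to argue via standard normal-form reasoning in the amalgam that every maximal non-cyclic Abelian subgroup of $G_i$ not conjugate to $C$ stays maximal in $G_{i+1}$, while conjugates of $C$ are properly absorbed into conjugates of $A$. I expect the main obstacle to be the cyclic case, where the hypotheses of Dahmani's parabolic extension must be verified uniformly along the tower — this is where one really leans on the CSA structure being carried through the induction.
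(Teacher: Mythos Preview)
Your proposal is correct and follows essentially the same inductive strategy as the paper: base case trivially hyperbolic, inductive step via Dahmani's combination theorem after adjoining the edge group to the peripheral structure, and verification of the resulting parabolics via Proposition~\ref{Myasnikov-Remeslennikov--Prop--Maximal Abelian subgroups}. The only difference is that the paper handles just the cyclic-centralizer case, dispensing with the non-cyclic case by a ``without loss of generality'' remark, whereas you treat both cases explicitly; your version is in fact more careful on this point.
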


\begin{proof}
  We induct on $k$, the number of steps in the iterated extension. If $k=0$, $G_k = \Gamma$ is hyperbolic and we are done.

  Suppose that $(G_k, \mathbb{P}_k)$ is relatively hyperbolic, where $\mathbb{P}_k$ is a set of representatives of conjugacy classes of maximal non-cyclic Abelian subgroups of $G_k$.
  Without loss of generality, we may assume that $G_{k+1}$ is constructed by extending the centralizer $C(u)=C_{G_{k}}(u)$ of a hyperbolic element $u \in G_k$ by a rank $n$ free Abelian group $A$, so that
  \begin{displaymath}
    G_{k+1} = G_k *_{C(u)} A.
  \end{displaymath}

  Since $u$ is hyperbolic in the CSA-group $(G_k, \mathbb{P}_k)$, the centralizer $C(u)$ is maximal Abelian and $N_{G_k}(C(u)) = C(u)$ by Proposition \ref{Myasnikov-Remeslennikov--Prop--CSA Properties}.
  Moreover, $C(u)$ is cyclic; otherwise, $u$ would be contained in a maximal non-cyclic Abelian subgroup of $(G_k, \mathbb{P}_k)$, contradicting that $u$ is hyperbolic.
  Therefore, by Proposition \ref{Dahmani--Prop--Hyperbolic cyclic subgroups can be considered parabolic},  $(G_k, \mathbb{P}_k \cup \{C(u)\})$ is relatively hyperbolic.
  The free Abelian group $A$ may be viewed as the relatively hyperbolic group $(A, \{A\})$, so $C(u) \leq A$ is parabolic.
  By Proposition \ref{Dahmani--Prop--Combination Theorem for Relatively Hyperbolic Groups}, $(G_{k+1}, \mathbb{P}_k \cup \{A\})$ is therefore a relatively hyperbolic group.
  Finally, Proposition \ref{Myasnikov-Remeslennikov--Prop--Maximal Abelian subgroups} states that every maximal non-cyclic Abelian subgroup of $G_{k+1}$ is conjugate to some member of $\mathbb{P}_k \cup \{A\}$, so $G_{k+1}$ is indeed hyperbolic relative to its maximal non-cyclic Abelian subgroups.
\end{proof}

\subsection{Relative hyperbolic geometry}

Fix a relatively hyperbolic group $(G, \mathbb{P})$ with finite relative generating set $X$.
We call $\Cay(G, \xup)$ the \emph{relative Cayley graph}.

Recall that a metric space $(X, d_X)$ is \emph{$\delta$-hyperbolic}, or simply \emph{hyperbolic}, if it satisfies the \emph{thin triangles condition}: for any geodesic triangle with sides $\alpha, \beta, \gamma$, every point of $\alpha$ is $\delta$-close in the metric $d_X$ to some point of $\beta \cup \gamma$.

\begin{prop}[\cite{Osin2006}]
  Let $(G,\mathbb{P})$ be a relatively hyperbolic group.
  Then for any finite relative generating set $X$, the relative Cayley graph $\Cay(G,\xup)$ is hyperbolic.
\end{prop}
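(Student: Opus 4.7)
The plan is to derive the hyperbolicity of $\Cay(G,\xup)$ directly from the linear relative isoperimetric inequality, in the spirit of the classical theorem that a linear Dehn function implies word-hyperbolicity. I would fix a finite relative presentation $\langle X, \mathcal{P} \mid \mathcal{R}, \mathcal{S}\rangle$ realizing the relative Dehn function with linear constant $C$, and set $M := \max_{R \in \mathcal{R}}|R|_\xup$. It suffices to produce a constant $\delta = \delta(C, M)$ such that every geodesic triangle in $\Cay(G, \xup)$ is $\delta$-thin.

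Given such a triangle with sides $\alpha, \beta, \gamma$ of total length $n$, I would read its boundary as a word $W \in (\xup)^*$ of relative length $n$ representing $1$ in $G$, and invoke the linear relative isoperimetric inequality to produce an equality
\begin{equation*}
  W =_F \prod_{i=1}^{k} R_i^{f_i} \cdot \prod_{j=1}^{\ell} S_j^{h_j},
\end{equation*}
with $k \leq Cn$, each $R_i \in \mathcal{R}$, and each $S_j \in \mathcal{S}$. This decomposition yields a van Kampen diagram $D$ with $k$ interior \emph{essential} cells of perimeter at most $M$, together with $\ell$ additional \emph{parabolic} cells whose boundary labels are words in $(P_\lambda - 1)^*$ for various $\lambda$.

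The key geometric observation is that the parabolic cells cost nothing in the relative Cayley graph: each such cell has all of its boundary vertices in a single coset of some $P_\lambda$, and any two vertices of that coset are joined by a single $\mathcal{P}$-edge in $\Cay(G,\xup)$. Star-triangulating each parabolic cell from a boundary basepoint along these $\mathcal{P}$-edges turns every such cell into a fan of triangles, each of perimeter at most $3$ and each labeled by a length-$3$ element of $\mathcal{S}$. The resulting refinement $D'$ is a disk diagram for $W$ in which every $2$-cell has perimeter at most $\max(M, 3)$, and the parabolic fans are confined to bounded-diameter regions of $\Cay(G,\xup)$.

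Finally, I would invoke the classical combinatorial criterion (as in Bridson--Haefliger III.H.2) that a simply connected disk diagram with uniformly bounded-perimeter cells and a linear area bound on boundary loops has $\delta$-hyperbolic $1$-skeleton. The main obstacle is that only the $k \leq Cn$ essential cells enjoy an \emph{a priori} linear bound, while the fan triangulations of the parabolic cells may inflate the total cell count. This is resolved by a combinatorial Gauss--Bonnet / curvature argument in which the parabolic fans, having diameter at most $1$ in $\Cay(G,\xup)$, contribute no net negative curvature and can be collapsed onto single vertices of a reduced diagram. Once this reduction is carried out, the thin-triangle condition falls out with slenderness constant depending only on $C$ and $M$, and the hyperbolicity of $\Cay(G,\xup)$ follows.
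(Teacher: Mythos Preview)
The paper does not supply its own proof of this proposition: it is stated with a bare citation to Osin \cite{Osin2006} and then used as a black box. So there is no argument in the paper to compare your proposal against.

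As for the proposal itself, the overall strategy is the right one and is essentially what Osin does: a linear relative Dehn function forces a linear isoperimetric inequality in $\Cay(G,\xup)$, and hyperbolicity follows from the standard equivalence. Your observation that every $\mathcal{S}$-cell, though of unbounded combinatorial perimeter, has diameter at most $1$ in the relative Cayley graph is exactly the point that makes the relative theory go through.

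Where your sketch falls short is the last paragraph. You correctly identify the obstacle --- the fan triangulation of the parabolic cells may produce far more than $Cn$ triangles, so the naive cell count is not linear --- and then dispose of it by invoking an unspecified ``combinatorial Gauss--Bonnet / curvature argument'' under which the fans ``contribute no net negative curvature and can be collapsed.'' That sentence is doing all the work and none of it is justified. In practice one does \emph{not} count the triangulated $\mathcal{S}$-cells at all; rather, one observes that in $\Cay(G,\xup)$ each $\mathcal{S}$-cell already bounds a region of diameter $1$, so the relevant notion of area (e.g., the number of $\mathcal{R}$-cells in a minimal diagram, or equivalently the coarse area of a loop) is linear without any triangulation. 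Rewriting your argument so that the isoperimetric function is measured by $\mathcal{R}$-cells alone, and then appealing to the linear-isoperimetric-implies-hyperbolic theorem for that notion of area, would close the gap cleanly.
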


We have two distinct metrics on $\Cay(G,\xup)$.
The \emph{relative metric} is denoted $d_\xup$, and for $u,v \in \Cay(G, \xup)$, we define $d_\xup(u,v)$ to be the least number of edges in any path in $\Cay(G,\xup)$ having $u$ and $v$ as endpoints.
The \emph{absolute metric} is denoted $d_X$, and for $u,v \in \Cay(G,\xup)$, we define $d_X(u,v)$ to be the least number of edges in any $X$-labeled path in $\Cay(G,\xup)$ having $u$ and $v$ as endpoints.
Note that while $\Cay(G,\xup)$ is hyperbolic with respect to the relative metric, it will generally not be hyperbolic with respect to the absolute metric.

A \emph{relative geodesic} is an isometry $p: [0,L] \rightarrow (\Cay(G,\xup),d_\xup)$, where $[0,L]$ is a closed interval of real numbers.
We say that the \emph{endpoints} of $p$ are $p(0)$ and $p(L)$. Since every point $\Cay(G,\xup)$ is a distance at most 1 from some vertex, we will assume that $L$ is an integer and that $p$ maps integers to vertices.
For $u, v \in \Cay(G, \xup)$, we denote by $[u,v]_\xup$ a relative geodesic with endpoints $u$ and $v$.

Similarly, an \emph{absolute geodesic} is an isometry $p: [0,L] \rightarrow (\Cay(G,\xup),d_X)$.
We denote an absolute geodesic having $u$ and $v$ as endpoints by $[u,v]_X$.

A \emph{relative (absolute) broken geodesic} is a finite concatenation of relative (absolute) geodesics.
For a finite collection $\{a_1, \dots, a_k\}$ of points in $\Cay(G, \xup)$, we will denote by $[a_1, a_2, \dots, a_k]_\xup$ a broken relative geodesic which is the union of relative geodesics $\ds{\bigcup_{i=1}^{k-1} [a_i,a_{i+1}]_\xup}$.
Likewise, $[a_1, a_2, \dots, a_k]_X$ denotes the analogous broken absolute geodesic.

The \emph{length} of a path $\alpha$ in $\Cay(G,\xup)$, denoted $\len(\alpha)$, is the number of edges in the path.
Note that $\len([a,b]_\xup) = d_\xup(a,b)$ and $\len([a,b]_X) = d_X(a,b)$, for instance.

\begin{defn}[Fellow traveling] \index{fellow traveling}
  \label{Defn--Fellow traveling}
  Let $p, q: [0,L] \rightarrow (\Cay(G,\xup),d_\xup)$ be relative geodesics.
  We say that $p$ and $q$ are \emph{relative (absolute) $k$-fellow travelers} if $d_\xup(p(i),q(i)) \leq k$ (resp. $d_X(p(i),q(i)) \leq k$) for every integer $i$ in $[0,L]$.
  We say that $p$ and $q$ \emph{relatively (absolutely) $k$-fellow travel for a length of $L'$} if $p|_{[0,L']}$ and $q|_{[0,L']}$ are relative (absolute) $k$-fellow travelers.
\end{defn}

\begin{remark}
  Our notion of $k$-fellow traveling is often referred to in the literature as \emph{synchronous $k$-fellow traveling}, to distinguish it from \emph{asynchronouse $k$-fellow traveling}, which does not respect the parameterization of the geodesics.
  We will not require the notion of asynchronous $k$-fellow traveling here.
\end{remark}

\begin{defn}[Relatively quasiconvex] \index{relatively quasiconvex}
  \label{Defn--relatively quasiconvex}
  A subgroup $H$ of $(G, \mathbb{P})$ is called \emph{relatively quasiconvex} if there exists a constant $\epsilon > 0$ such that the following holds.
  Let $g, h \in H$ and let $[g,h]_\xup$ be an arbitrary relative geodesic in $\Cay(G, \xup)$.
  Then for every vertex $v \in [g,h]_\xup$, there exists a vertex $u \in H$ such that
  \begin{equation*}
    d_X(v,u) \leq \epsilon.
  \end{equation*}
\end{defn}

\begin{defn}[Strongly relatively quasiconvex] \index{strongly relatively quasiconvex}
  \label{Defn--strongly relatively quasiconvex}
  A relatively quasiconvex subgroup $H$ of $(G, \mathbb{P})$ is called \emph{strongly relatively quasiconvex} if the intersection $H \cap P^g$ is finite for any $g \in G$ and $P \in \mathbb{P}$.
\end{defn}

Osin notes in Proposition 4.10 of \cite{Osin2006} that the relative and strong relative quasiconvexity properties are invariant with respect to choice of finite generating set for $G$.

\begin{prop}[{\cite[4.19]{Osin2006}}]
  \label{Osin--Prop--Centralizers of hyperbolics are strongly relatively quasiconvex}
  Let $(G, \mathbb{P})$ be a relatively hyperbolic group, and let $u \in G$ be a hyperbolic element.
  Then the centralizer $C_G(u)$ is a strongly relatively quasiconvex subgroup of $G$.
\end{prop}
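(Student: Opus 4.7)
The strategy is to reduce the statement about $C_G(u)$ to the analogous statement about the cyclic subgroup $\langle u \rangle$. The reduction rests on the structural fact that in a relatively hyperbolic group the centralizer of a hyperbolic element of infinite order is virtually cyclic: specifically, $\langle u \rangle$ has finite index in $C_G(u)$. Once this is established, strong relative quasiconvexity of $C_G(u)$ follows from that of $\langle u \rangle$ by the general principle that strong relative quasiconvexity is preserved under finite-index overgroups.

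I would first verify strong relative quasiconvexity of $\langle u \rangle$. Because $u$ is hyperbolic, it acts as a loxodromic isometry on the $\delta$-hyperbolic relative Cayley graph $(\Cay(G,\xup), d_\xup)$; the translation length is positive, so the orbit map $n \mapsto u^n$ is a quasi-isometric embedding of $\mathbb{Z}$. Stability of quasi-geodesics in $\delta$-hyperbolic spaces places any relative geodesic $[u^m, u^n]_\xup$ within a bounded $d_\xup$-neighborhood of the orbit. The stronger statement that every vertex of such a geodesic lies within bounded $d_X$-distance of $\langle u \rangle$ follows from Osin's analysis of transition points between the geodesic and the orbit, together with the fact that consecutive orbit vertices $u^k, u^{k+1}$ satisfy $d_X(u^k, u^{k+1}) = |u|_X$. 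For the strong condition, if $\langle u \rangle \cap P^g$ were infinite, it would contain a nontrivial power $u^k$; since translation length is multiplicative, $u^k$ is itself hyperbolic, contradicting $u^k \in P^g$.

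Next I would invoke Osin's theory of elementary subgroups: a hyperbolic element $u$ of infinite order is contained in a unique maximal elementary subgroup $E(u)$, which is virtually cyclic with $\langle u \rangle$ of finite index, and $C_G(u) \leq E(u)$. Thus $[C_G(u):\langle u\rangle]<\infty$. Strong relative quasiconvexity then lifts to $C_G(u)$: the coset decomposition $C_G(u) = \bigsqcup_{i=1}^{m} \langle u \rangle g_i$ places $C_G(u)$ within a bounded $d_X$-neighborhood of $\langle u \rangle$, giving relative quasiconvexity; and $C_G(u) \cap P^g$ contains $\langle u \rangle \cap P^g$ as a finite-index subgroup (after further intersecting with $\langle u \rangle$), forcing the intersection to be finite.

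The main obstacle is the finite-index claim $[C_G(u):\langle u\rangle] < \infty$. This is the nontrivial content of the proposition and relies on Osin's translation-length analysis of loxodromic isometries on $(\Cay(G,\xup), d_\xup)$, the construction of the elementary closure $E(u)$, and the verification that $E(u)$ is virtually cyclic. Everything else reduces to routine unpacking of the definition of relative quasiconvexity together with the standard ``big powers'' dynamics of hyperbolic elements in a relatively hyperbolic group.
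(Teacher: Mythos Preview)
The paper does not supply its own proof of this proposition: it is quoted verbatim as a result of Osin, with the citation \cite[4.19]{Osin2006}, and no argument is given in the text. So there is no ``paper's own proof'' to compare your proposal against.

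For what it is worth, your outline is broadly in the spirit of Osin's original argument: one first establishes that the cyclic subgroup $\langle u \rangle$ is strongly relatively quasiconvex (quasi-geodesic orbit plus the transition-point analysis to upgrade $d_\xup$-closeness to $d_X$-closeness), then uses that $\langle u \rangle$ has finite index in the elementary closure $E(u) \supseteq C_G(u)$ to promote the conclusion. The only place to be careful is the step you flag yourself: passing from a bounded $d_\xup$-neighborhood to a bounded $d_X$-neighborhood is precisely where the relative-hyperbolic machinery (bounded coset penetration, Osin's Lemma~3.1 on components) is needed, and it does not follow from ordinary $\delta$-hyperbolic stability alone. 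Your last paragraph correctly identifies this as the substantive content.
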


Let $\lambda > 0$ and $c \geq 0$. \index{quasi-isometric embedding}
Recall that a map of metric spaces $f: (X, d_X) \rightarrow (Y, d_Y)$ is a \emph{$(\lambda,c)$-quasi-isometric embedding} if for all $a, b \in X$, we have
\begin{equation*}
  \dfrac{1}{\lambda}d_X(a,b) - c \leq d_Y\big(f(a),f(b)\big) \leq \lambda d_X(a,b) + c.
\end{equation*}

\begin{prop}[\cite{Osin2006}]
  Every strongly relatively quasiconvex subgroup of $(G, \mathbb{P})$ is quasi-isometrically embedded in $\Cay(G,\mathbb{P})$.
\end{prop}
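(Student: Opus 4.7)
The goal is to find constants $\lambda \geq 1$ and $c \geq 0$ such that
\begin{equation*}
\tfrac{1}{\lambda}|h|_Y - c \leq |h|_X \leq \lambda |h|_Y + c \qquad \text{for all } h \in H,
\end{equation*}
where $Y$ is a finite generating set for $H$ and $|h|_X$ denotes the absolute length in $\Cay(G, X \cup \mathcal{P})$. The preliminary step is finite generation of $H$: by a shadowing argument along relative geodesics from $1$ to an element of $H$, any $h \in H$ can be written as a product of ``shadow jumps,'' each of which lies either in the finite ball $H \cap B_{2\epsilon+1}(G,X)$ or in a conjugacy-intersection $H \cap gPg^{-1}$, finite by the strong hypothesis. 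A finite set $Y$ collecting such representatives generates $H$.

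The upper bound $|h|_X \leq \lambda |h|_Y$ is immediate with $\lambda = \max_{y \in Y}|y|_X$, by concatenating $X$-expressions of the $Y$-letters of a $Y$-geodesic word for $h$. For the lower bound, given $h \in H$ I would take a relative geodesic $\gamma = [1,h]_\xup$ with vertices $v_0 = 1, \ldots, v_k = h$, so $k = d_\xup(1,h) \leq |h|_X$, and shadows $h_i \in H$ with $d_X(h_i, v_i) \leq \epsilon$ supplied by relative quasiconvexity, with $h_0 = 1$ and $h_k = h$. Telescoping $h = \prod_{i=0}^{k-1}(h_i^{-1}h_{i+1})$ reduces the task to a uniform bound $|h_i^{-1}h_{i+1}|_Y \leq M$, from which $|h|_Y \leq Mk \leq M|h|_X$ follows. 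For an $X$-labeled edge $(v_i, v_{i+1})$ the shadow jump $h_i^{-1}h_{i+1}$ lies in the finite set $H \cap B_{2\epsilon+1}(G,X)$, so such jumps have uniformly bounded $Y$-length.

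The main obstacle is the parabolic case: a $\mathcal{P}$-labeled edge $v_{i+1} = v_i p$ gives $h_i^{-1}h_{i+1} = q_i p q_{i+1}^{-1}$ with $|q_i|_X,|q_{i+1}|_X \leq \epsilon$ but $|p|_X$ unrestricted, so no direct $X$-ball argument controls it. Strong relative quasiconvexity enters here decisively: for each choice of $q_i \in B_\epsilon(G,X)$ the intersection $q_i P_\lambda q_i^{-1} \cap H$ is finite, and using that $\gamma$ is a relative geodesic---so each parabolic edge penetrates a distinct coset without back-tracking---one shows that the set of all shadow jumps arising from parabolic edges is the union, over the finitely many triples $(q_i, q_{i+1}, P_\lambda)$, of finitely many of these finite intersections, hence itself a globally finite subset of $H$. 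Enlarging $Y$ to contain these elements from the outset, the maximum $Y$-length over this finite set gives the desired uniform constant $M$. Packaging ``finite intersection with conjugates of parabolics'' into a ``uniform $Y$-length bound on parabolic shadow jumps'' is the technical heart of the argument, and it is precisely what fails for subgroups that are relatively quasiconvex but not strongly so.
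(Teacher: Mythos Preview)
The paper does not prove this proposition; it is quoted with citation to Osin and used as a black box. So there is no in-paper argument to compare against, and I can only comment on your proposal on its own merits.

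Your outline is the standard one and is sound, with two points to tighten. First, a small metric mismatch: as used in the paper (see the proof of Lemma~\ref{Solie--Lemma--Coset Self FT, E_0} and Proposition~\ref{Osin--Prop--U is qie, lambda_u, c_u}), the embedding is with respect to the \emph{relative} metric $d_\xup$, not $d_X$. In fact your argument already gives this: your $k$ equals $d_\xup(1,h)$, so you prove $|h|_Y \leq M\,d_\xup(1,h)$ before weakening to $M|h|_X$. State the relative version explicitly.

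Second, the parabolic step contains a genuine slip. The shadow jump $h_i^{-1}h_{i+1}=q_i p\,q_{i+1}^{-1}$ does \emph{not} lie in the subgroup $q_iP_\lambda q_i^{-1}$ unless $q_{i+1}^{-1}q_i\in P_\lambda$, so it is not in ``these finite intersections'' as you claim. The correct observation is that it lies in the \emph{coset} $H\cap q_iP_\lambda q_{i+1}^{-1}$; any two elements of this set differ by an element of $H\cap q_iP_\lambda q_i^{-1}$, which is finite by strong relative quasiconvexity, so the coset itself is finite (or empty). Ranging over the finitely many triples $(q_i,q_{i+1},P_\lambda)$ with $q_i,q_{i+1}\in B_\epsilon(G,X)$ then gives the global finite set you want. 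Your appeal to ``distinct cosets without back-tracking'' plays no role and should be dropped. Note also that this step uses finiteness of $B_\epsilon(G,X)$ and of the peripheral family $\mathbb{P}$; both hold in the paper's setting but are worth making explicit.
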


\begin{prop}[\cite{Osin2006}]
  Let $u$ be a hyperbolic element of $(G, \mathbb{P})$.
  Then $C_G(u)$ is cyclic.
\end{prop}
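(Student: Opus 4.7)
The plan is to argue in five short steps, essentially following the strategy sketched in the commented-out paragraph that precedes the statement.

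First, I would invoke Proposition \ref{Osin--Prop--Centralizers of hyperbolics are strongly relatively quasiconvex} to conclude that $C_G(u)$ is a strongly relatively quasiconvex subgroup of $(G, \mathbb{P})$. Second, I would use the preceding quasi-isometric embedding proposition together with the hyperbolicity of $\Cay(G, \xup)$: a strongly relatively quasiconvex subgroup is finitely generated (this is part of Osin's structural theorems for such subgroups) and, equipped with its own word metric, quasi-isometrically embeds into the hyperbolic space $\Cay(G,\xup)$. Since being Gromov hyperbolic is preserved under quasi-isometric embedding of a geodesic space that is itself a Cayley graph (one checks the thin-triangles condition using the embedding and the hyperbolicity of the ambient space), $C_G(u)$ is a hyperbolic group in the usual sense.

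Third, I would observe that $u \in C_G(u)$ lies in the center of $C_G(u)$ by definition of centralizer, and that $u$ has infinite order. In the running setting of this paper $G$ is torsion-free (the context is iterated extensions of centralizers over a torsion-free hyperbolic $\Gamma$, which are themselves torsion-free by Proposition \ref{Myasnikov-Remeslennikov--Prop--CSA Properties}), so every nontrivial element, hyperbolic or otherwise, has infinite order. Hence the center $Z(C_G(u))$ is infinite.

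Fourth, I would apply the classical fact that a (word) hyperbolic group with infinite center is virtually cyclic; this is a standard consequence of the fact that in a hyperbolic group, the centralizer of an infinite-order element is virtually cyclic, applied to any infinite-order central element. Finally, a torsion-free virtually cyclic group is infinite cyclic, so $C_G(u) \cong \mathbb{Z}$. The only step that requires any real care is the second one: confirming that strong relative quasiconvexity passes from a property of the embedding into $\Cay(G,\xup)$ to actual Gromov hyperbolicity of $C_G(u)$ as an abstract finitely generated group. All the other ingredients are immediate invocations of previously stated results or standard hyperbolic-group facts.
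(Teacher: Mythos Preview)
The paper does not give a proof of this proposition in the compiled text; it is simply cited from \cite{Osin2006}. Your argument is correct and reproduces essentially verbatim the commented-out proof sketch that appears earlier in the source (between the Dahmani propositions and Corollary~\ref{Solie--Cor--Gamma limit groups are relatively hyperbolic}), so there is nothing substantive to compare. One small remark: as you note, the bare statement concerns an arbitrary relatively hyperbolic pair $(G,\mathbb{P})$, for which Osin's result yields only that $C_G(u)$ is \emph{virtually} cyclic; the upgrade to ``cyclic'' requires the ambient torsion-freeness that both you and the commented-out sketch import from the paper's standing hypotheses.
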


\begin{prop}[\cite{Osin2006}]
  \label{Osin--Prop--U is qie, lambda_u, c_u}
  For any hyperbolic $u \in (G, \mathbb{P})$ generating its own centralizer, there are constants $\lambda_u >0, c_u \geq 0$ such that
  \begin{equation}
    \dfrac{1}{\lambda_u}|n| - c_u \leq d_\xup(1,u^n) \leq \lambda_u |n| + c_u
  \end{equation}
  for all $n \in \mathbb{Z}$.
\end{prop}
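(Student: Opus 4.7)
My plan is to derive this proposition as a direct consequence of the two preceding results. Since $u$ generates its own centralizer, $C_G(u) = \langle u \rangle$. I would first invoke Proposition \ref{Osin--Prop--Centralizers of hyperbolics are strongly relatively quasiconvex} to conclude that $\langle u \rangle$ is a strongly relatively quasiconvex subgroup of $(G, \mathbb{P})$. Applying the subsequent proposition that every strongly relatively quasiconvex subgroup is quasi-isometrically embedded, I then obtain constants $\lambda \geq 1$ and $c \geq 0$ witnessing a quasi-isometric embedding of $\langle u \rangle$, equipped with its intrinsic word metric, into $(\Cay(G,\xup), d_\xup)$.

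Identifying $\langle u \rangle \cong \mathbb{Z}$ with its natural generating set $\{u, u^{-1}\}$, we have $|u^n|_{\{u\}} = |n|$, so the quasi-isometric embedding inequality specializes at the pair $(1, u^n)$ to
$$\frac{1}{\lambda}|n| - c \leq d_\xup(1, u^n) \leq \lambda|n| + c$$
for all $n \in \mathbb{Z}$. Taking $\lambda_u = \lambda$ and $c_u = c$ then yields exactly the stated inequality.

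It is worth noting that the upper bound is essentially free: the triangle inequality immediately gives $d_\xup(1, u^n) \leq |n| \cdot d_\xup(1, u)$, so any $\lambda_u \geq d_\xup(1, u)$ suffices on that side with $c_u = 0$. All of the substance lies in the lower bound, which asserts that $u^n$ cannot be expressed too efficiently in the relative alphabet $X \cup \mathcal{P}$, despite the fact that entire parabolic subgroups have been collapsed into single letters. Since the two supporting propositions are available as black boxes, no real obstacle remains to overcome here; the argument is essentially a translation of the quasi-isometric embedding statement for $\langle u \rangle$ into the specific functional form the proposition claims. The genuine work was done in the cited result that centralizers of hyperbolic elements are strongly relatively quasiconvex, and in the geometric fact that strong relative quasiconvexity implies a quasi-isometric embedding into the relative Cayley graph.
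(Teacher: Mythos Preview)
Your argument is correct. The paper does not actually supply a proof of this proposition; it simply cites it from \cite{Osin2006}, as with the two preceding propositions. Your derivation---combining Proposition~\ref{Osin--Prop--Centralizers of hyperbolics are strongly relatively quasiconvex} with the statement that strongly relatively quasiconvex subgroups are quasi-isometrically embedded, and then specializing the resulting quasi-isometry inequality to the pair $(1,u^n)$ in $\langle u\rangle\cong\mathbb{Z}$---is exactly the natural deduction the paper's ordering of results invites, and is presumably how Osin's original argument proceeds as well.
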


\section{Main Results}

\subsection{Relative hyperbolic geometry}

We once again fix a relatively hyperbolic group $(G, \mathbb{P})$ with finite relative generating set $X$ such that the relative Cayley graph $\Cay(G,\xup)$ is $\delta$-hyperbolic.

\begin{lem}
  \label{Solie--Lemma--Double Coset FT, B_0}
  Let $u \in G$ be a hyperbolic element generating its own centralizer $U=C_G(u)$.
  There is a function $B_0: \mathbb{N} \rightarrow \mathbb{N}$ depending only on $(G, \mathbb{P})$, $X$, and $u$ such that the following holds.

  Let $g \in G-U$.
  Let $p,q \in U$ and $s,t \in gU$.
  For any $p',q' \in [p,q]_\xup$ and $s',t' \in [s,t]_\xup$ such that $[p',q']_\xup$ and $[s',t']_\xup$ are absolute $k$-fellow travelers, then
  \begin{equation*}
    d_\xup(p',q'),\: d_\xup(s',t') \leq B_0(k).
  \end{equation*}
\end{lem}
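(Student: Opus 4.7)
The plan is a pigeonhole argument that uses strong relative quasiconvexity of the cosets $U$ and $gU$ to convert prolonged absolute fellow-traveling into a finiteness statement about double-coset representatives. By Proposition~\ref{Osin--Prop--Centralizers of hyperbolics are strongly relatively quasiconvex}, the cyclic centralizer $U$ is strongly relatively quasiconvex, and the same holds for its coset $gU$ with a uniform quasiconvexity constant $\epsilon$ depending only on $(G,\mathbb{P},X,u)$. Writing $L := d_\xup(p',q')$ and letting $a_i$ (resp.\ $b_i$) denote the vertex at relative distance $i$ from $p'$ on $[p',q']_\xup$ (resp.\ from $s'$ on $[s',t']_\xup$), I would choose $u_i \in U$ and $u_i' \in U$ with $d_X(a_i,u_i), d_X(b_i,g u_i') \leq \epsilon$ for each integer $i \in \{0,\ldots,L\}$.

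Next, the absolute $k$-fellow-traveling hypothesis combined with the triangle inequality gives $d_X(u_i, g u_i') \leq 2\epsilon + k =: A$, so the double-coset representative $w_i := u_i^{-1} g u_i'$ lies in the finite ball $B_A(G,X)$, of cardinality $N$ depending only on $(G,\mathbb{P},X,k)$. Proposition~\ref{Osin--Prop--U is qie, lambda_u, c_u} further ensures that $u_i = u_j$ forces $d_\xup(a_i,a_j) \leq 2\epsilon$, whence $|i-j| \leq 2\epsilon$; thus each element of $U$ appears as $u_i$ for at most $2\epsilon + 1$ indices. Pigeonhole then yields, once $L$ is sufficiently large, indices $i \neq j$ with $w_i = w_j$ and $u_i \neq u_j$, and a short manipulation gives a nontrivial element $\alpha := u_j u_i^{-1} \in U \cap g U g^{-1}$.

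The main obstacle will be bounding $|U \cap g U g^{-1}|$ uniformly in $g \in G - U$. In the generic case $g \notin E(u)$, where $E(u)$ denotes the (virtually cyclic) elementary closure of $\langle u \rangle$, this intersection is trivial and we obtain $L \leq N(2\epsilon + 1) - 1$ immediately. The harder case is $g \in E(u) - U$: conjugation by $g$ must then act on $U \cong \mathbb{Z}$ by inversion, and I would handle this case separately by a direct geometric calculation showing that pairs of corresponding vertices $u^i$ and $u^{-i} g$ along the two cosets drift apart in the absolute metric at a linear rate controlled by the quasi-isometric embedding constants $\lambda_u, c_u$. Since only finitely many classes $[g] \in E(u)/U$ need be considered, this yields a bound on the absolute $k$-fellow traveling length in terms of $k$, $\lambda_u$, $c_u$, and $\epsilon$ alone. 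Combining both cases produces the desired function $B_0(k)$ depending solely on $(G,\mathbb{P},X,u)$.
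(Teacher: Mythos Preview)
Your approach is essentially the paper's: both arguments use relative quasiconvexity of $U$ (hence of the coset $gU$) together with a pigeonhole count to manufacture an algebraic coincidence contradicting $g\notin U$. The paper pigeonholes on the $X$-\emph{labels} of the short broken paths $[u^{m_i},x_i,y_i,gu^{n_i}]_X$ (of length at most $k+2\epsilon$), whereas you pigeonhole on the \emph{elements} $w_i=u_i^{-1}gu_i'$ lying in the ball $B_{k+2\epsilon}(G,X)$; these are equivalent and give the same bound up to constants.

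Where you diverge is the endgame, and there you are working harder than necessary. In the paper's setting $G$ is a CSA-group (this hypothesis is used implicitly in the proof via Proposition~\ref{Myasnikov-Remeslennikov--Prop--CSA Properties}). Since $u$ generates its own centralizer, $U=C_G(u)$ is maximal abelian and hence \emph{malnormal}: $U\cap gUg^{-1}=1$ for every $g\in G-U$. Your case~2 ($g\in E(u)\setminus U$) is therefore vacuous, and the argument terminates at the end of case~1 without any ``direct geometric calculation.'' The paper reaches the same contradiction by a slightly different route: from two identically labeled paths it extracts a relation $w^{-1}u^aw=u^b$ with $w\in gU\cdot U^{-1}$, invokes Osin's result that $a=\pm b$ in a relatively hyperbolic group, and then uses commutative-transitivity to force $w\in U$. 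Your malnormality shortcut is, if anything, the cleaner way to finish. (One small note: the bound ``$u_i=u_j \Rightarrow |i-j|\le 2\epsilon$'' follows from the triangle inequality and $d_\xup\le d_X$, not from Proposition~\ref{Osin--Prop--U is qie, lambda_u, c_u}.)
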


\begin{figure}
  \begin{center}
    \includegraphics{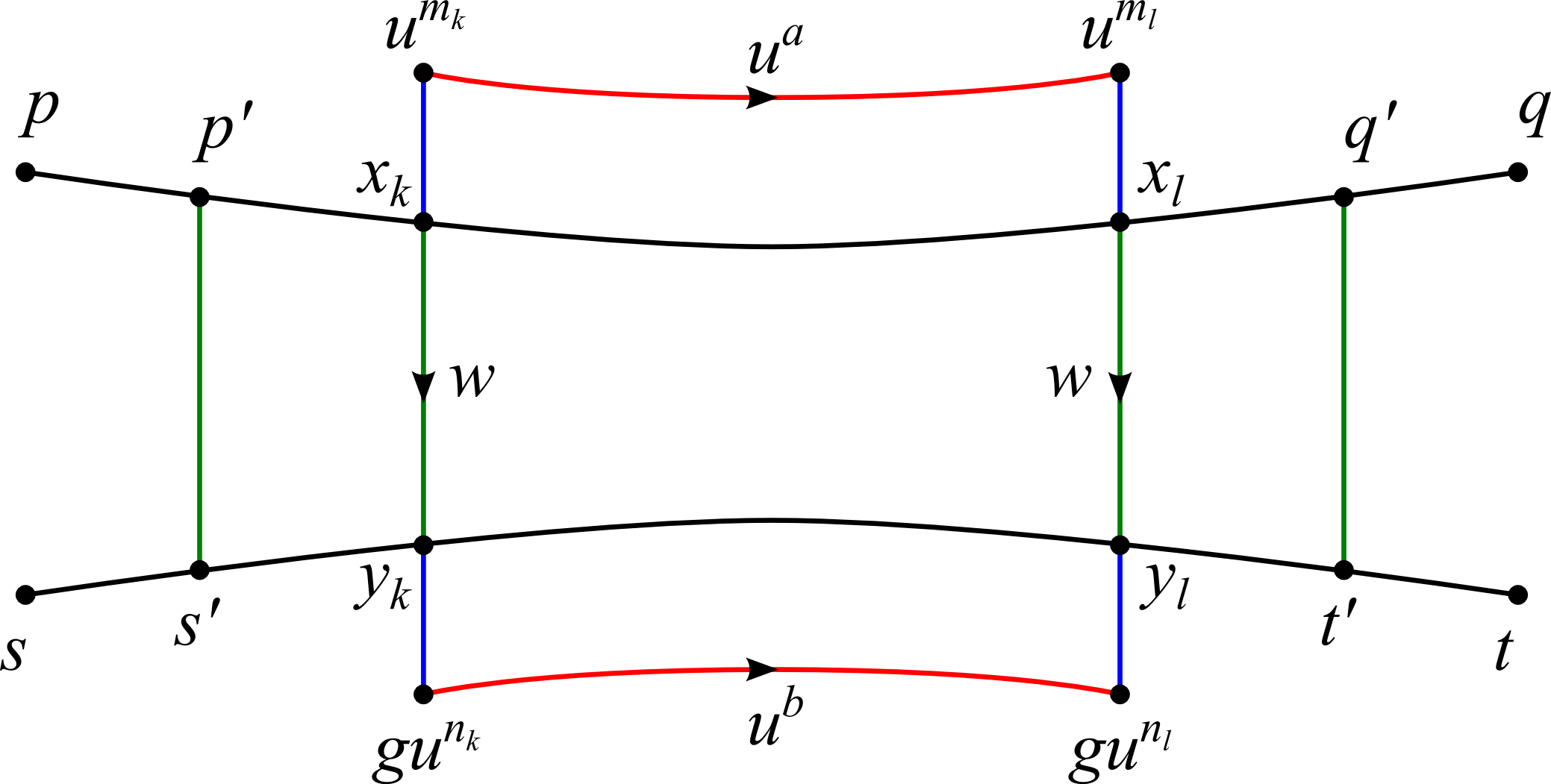}
    \caption{Producing the relation $w^{-1}u^a w = u^b$ in the proof of Lemma \ref{Solie--Lemma--Double Coset FT, B_0}}
    \label{Figure--DoubleCosetFT}
  \end{center}
\end{figure}

\begin{proof}
  Set $B_0(k) = (2\epsilon+1)(2|X|)^{k+2\epsilon}$, and suppose that for some nonnegative integer $k$, there exist $p,p',q,q',s,s',t,$ and $t'$ which satisfy the hypotheses but such that $d_\xup(p',q') > B_0(k)$.

  We may find $(2|X|)^{k+2\epsilon}$ vertices, denoted $x_i$, on $[p',q']_\xup$ such that if $i \neq j$ then $d_\xup(x_i,x_j) > 2\epsilon$.
  To each $x_i$ we may associate a $u^{m_i} \in U$ such that $d_X(x_i, y_i) \leq \epsilon$, since $U$ is relatively quasiconvex.
  Note that if $i \neq j$, then $m_i \neq m_j$; otherwise, we would have $d_\xup(x_i,x_j) \leq d_X(x_i,x_j) \leq 2\epsilon$, contradicting the choice of the $x_i$.

  Since $[p',q']_\xup$ and $[s',t']_\xup$ are absolute $k$-fellow travelers, for each $x_i$ there is a vertex $y_i \in [s',t']_\xup$ such that $d_X(x_i,y_i) \leq k$.
  Since $U$ is $\epsilon$-quasiconvex, for each $y_i$ there is $gu^{n_i} \in gU$ such that $d_X(y_i, gu^{n_i}) \leq \epsilon$.

  To each point $x_i$, we associate the broken absolute geodesic $[u^{m_i},x_i,y_i,gu^{n_i}]_X$.
  The length of such a path is at most $k+2\epsilon$, and there are $(2|X|)^{k+2\epsilon}$ such distinct paths, since no two of these paths have the same endpoint $u^{m_i}$.

  However, there are strictly fewer than $(2|X|)^{k+2\epsilon}$ distinct path labels for paths of length at most $k+2\epsilon$.
  Therefore, there are indices $k,l$ such that $[u^{m_k},x_k,y_k,gu^{n_k}]_X$ and $[u^{m_l},x_l,y_l,gu^{n_l}]_X$ have the same label, $w$.
  As the endpoints of these $w$-labeled paths differ by elements of $U$, we obtain a relation of the form $w^{-1} u^a w = u^b$ for some integers $a, b$.

  Since $G$ is relatively hyperbolic, we must have that $a = \pm b$ \cite[Corollary 4.21]{Osin2006}.
  Therefore, $w^2$ commutes with $u^a$.
  Since $G$ is a CSA-group and is therefore commutative-transitive (Proposition \ref{Myasnikov-Remeslennikov--Prop--CSA Properties}), $w$ commutes with $u$ and hence must be a power of $u$.
  This contradicts that $U$ and $gU$ are distinct cosets of $U$.
\end{proof}

\begin{lem}
  \label{Solie--Lemma--Coset Self FT, E_0}
  Let $u \in G$ be a hyperbolic element generating a maximal cyclic subgroup $U$.
  There is a function $E_0: \mathbb{N} \rightarrow \mathbb{N}$ depending only on $(G, \mathbb{P})$, $X$, and $u$ such that the following holds.

  For all $m, n \in \mathbb{Z}$ with $m < 0 < n$, the relative geodesics $[1,u^m]_\xup$ and $[1,u^n]_\xup$ relatively $k$-fellow travel for a length of at most $E_0(k)$.
\end{lem}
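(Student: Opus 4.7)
The plan is to bound the fellow-traveling length $L$ by a Gromov-product estimate in the $\delta$-hyperbolic space $(\Cay(G,\xup), d_\xup)$, applied to the relative geodesic triangle with vertices $1$, $u^m$, $u^n$.

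First, since $u$ is hyperbolic, $C_G(u)$ is cyclic; because $U = \langle u \rangle$ is maximal cyclic, it must coincide with $C_G(u)$. Proposition \ref{Osin--Prop--Centralizers of hyperbolics are strongly relatively quasiconvex} then gives strong relative quasiconvexity of $U$ with some constant $\epsilon$, and Proposition \ref{Osin--Prop--U is qie, lambda_u, c_u} provides constants $\lambda_u, c_u$ for the quasi-isometric embedding $n \mapsto u^n$.

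The key step is to locate a point on $[u^m, u^n]_\xup$ close (in $d_\xup$) to $1$. For each vertex $v_i$ on this geodesic, $\epsilon$-quasiconvexity of $U$ yields an integer $c_i$ with $d_X(v_i, u^{c_i}) \leq \epsilon$. Consecutive vertices satisfy $d_\xup(v_i, v_{i+1}) = 1$, so the triangle inequality gives $d_\xup(u^{c_i}, u^{c_{i+1}}) \leq 2\epsilon + 1$, and the quasi-isometric embedding then forces $|c_{i+1} - c_i| \leq \lambda_u(2\epsilon + 1 + c_u) =: B$. Since the sequence $(c_i)$ starts at $m < 0$ and ends at $n > 0$ with jumps bounded by $B$, a discrete intermediate value argument produces a vertex $\pi$ on $[u^m, u^n]_\xup$ with $|c(\pi)| \leq B$, whence
\begin{equation*}
  d_\xup(1, \pi) \leq d_\xup(1, u^{c(\pi)}) + d_\xup(u^{c(\pi)}, \pi) \leq (\lambda_u B + c_u) + \epsilon =: C.
\end{equation*}

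Since $\pi$ lies on a relative geodesic from $u^m$ to $u^n$, splitting at $\pi$ and twice applying the triangle inequality yields $d_\xup(u^m, u^n) \geq d_\xup(1, u^m) + d_\xup(1, u^n) - 2 d_\xup(1, \pi)$, which rearranges to the Gromov-product bound $(u^m \cdot u^n)_1 \leq d_\xup(1, \pi) \leq C$. On the other hand, two applications of the four-point $\delta$-hyperbolicity condition, inserting the intermediate points $p(L), q(L)$ where $p = [1,u^m]_\xup$ and $q = [1,u^n]_\xup$, give the complementary estimate $(u^m \cdot u^n)_1 \geq L - k/2 - 2\delta$ whenever $p$ and $q$ relatively $k$-fellow travel for length $L$. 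Combining, $L \leq C + k/2 + 2\delta$, and we may set $E_0(k) := C + k/2 + 2\delta$.

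The main obstacle is the step producing $\pi$: one must translate the sign change in $c_i$ (an intuitively clear geometric fact about the geodesic tracking $U$ from a negative to a positive power of $u$) into a rigorous bound on $d_\xup(1,\pi)$. This requires carefully combining relative quasiconvexity (to project onto $U$) with the quasi-isometric embedding (to bound the jumps of the projection), then invoking a discrete intermediate value argument. The remaining hyperbolic-geometry input — that fellow traveling of initial segments forces a large Gromov product at the shared basepoint — is standard.
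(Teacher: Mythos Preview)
Your argument is correct. The paper's own proof is a two-sentence sketch: it argues by contradiction that if the $k$-fellow-traveling length were unbounded, then the relative quasiconvexity and quasi-isometric embedding of $U$ would force arbitrarily large powers of $u$ to have relative length bounded by a constant, contradicting the quasi-isometric embedding. No further details are given.

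Your proof takes a different and more explicit route. Rather than arguing by contradiction, you work with the third side $[u^m,u^n]_\xup$ of the relative geodesic triangle, use $\epsilon$-quasiconvexity to project its vertices onto $U$, and run a discrete intermediate-value argument on the resulting exponent sequence (which passes from $m<0$ to $n>0$ with jumps bounded via the quasi-isometric embedding) to locate a vertex $\pi$ uniformly close to $1$. This yields a uniform bound on the Gromov product $\langle u^m \,|\, u^n \rangle_1$, which then bounds the fellow-travel length by the standard four-point inequality in the $\delta$-hyperbolic relative Cayley graph. Both arguments rest on the same two ingredients (quasiconvexity of $U$ and the quasi-isometric embedding from Proposition~\ref{Osin--Prop--U is qie, lambda_u, c_u}), but your version makes the mechanism explicit and produces a concrete formula $E_0(k)=C+k/2+2\delta$ rather than mere existence. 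One minor remark: the four-point inequality you use in the final step is equivalent to the thin-triangles condition only up to a multiplicative change in $\delta$, so your constants may differ from those obtained directly from thin triangles; this is harmless for the statement.
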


\begin{proof}
  If not, since $U$ is relatively quasiconvex and therefore quasi-isometrically embedded in $\Cay(G, \xup)$, there would have to be arbitrarily large powers of $u$ which have relative length bounded above by a constant.
  However, this contradicts that $U$ is quasi-isometrically embedded.
\end{proof}

Let $S$ be some set of elements of $(G, \mathbb{P})$.
We say that $g \in S$ is an \emph{$\xup$-shortest element of $S$} if $|g|_\xup \leq |h|_\xup$ for every $h \in S$.

\begin{lem}
  \label{Solie--Lemma--Coset FT, C_0}
  Let $u \in G$ generate a cyclic hyperbolic subgroup $U$.
  There is a function $C_0: \mathbb{N} \rightarrow \mathbb{N}$ depending only on $(G, \mathbb{P})$, $X$, and $u$ such that the following holds.

  Let $h$ be an $\xup$-shortest element of $hU$.
  Then for any integer $n$, the geodesics $[h,1]_\xup$ and $[h,hu^n]_\xup$ absolutely $k$-fellow travel for no longer than $C_0(k)$.
\end{lem}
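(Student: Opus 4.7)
The plan is to set $C_0(k) = k + \epsilon$, where $\epsilon$ is a relative quasiconvexity constant for $U$, and derive the bound by a short length comparison against the $\xup$-shortness hypothesis on $h$. I first need to know that $U$ is relatively quasiconvex: since $u$ is a hyperbolic element of the torsion-free relatively hyperbolic group $(G,\mathbb{P})$, its centralizer $C_G(u)$ is cyclic and strongly relatively quasiconvex by Proposition~\ref{Osin--Prop--Centralizers of hyperbolics are strongly relatively quasiconvex}; since $U$ sits as a finite-index subgroup of $C_G(u)$, it inherits relative quasiconvexity with a constant $\epsilon$ depending only on $(G,\mathbb{P})$, $X$, and $u$.

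Assume the two relative geodesics absolutely $k$-fellow travel for a length $L$, and let $p\in[h,1]_\xup$, $q\in[h,hu^n]_\xup$ be the vertices at relative distance $L$ from $h$, so that $d_X(p,q) \leq k$. The key step is to translate everything by $h^{-1}$: the image $h^{-1}\cdot[h,hu^n]_\xup = [1,u^n]_\xup$ is a relative geodesic with both endpoints in $U$, so relative quasiconvexity produces some $u^m\in U$ with $d_X(h^{-1}q,u^m)\leq\epsilon$. Translating back, $d_X(q,hu^m)\leq\epsilon$, and the triangle inequality then yields
\begin{equation*}
  d_\xup(p,hu^m) \;\leq\; d_X(p,q) + d_X(q,hu^m) \;\leq\; k+\epsilon.
\end{equation*}

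The argument concludes by comparing relative lengths. Since $p$ lies at relative distance $L$ from $h$ on a relative geodesic terminating at $1$, we have $|p|_\xup = |h|_\xup - L$, so the triangle inequality gives $|hu^m|_\xup \leq |h|_\xup - L + k + \epsilon$. But the $\xup$-shortness of $h$ in its coset $hU$ forces $|h|_\xup \leq |hu^m|_\xup$, and therefore $L \leq k+\epsilon$, establishing the desired bound.

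I do not anticipate a serious obstacle: the lemma is essentially a direct consequence of the relative quasiconvexity of $U$ paired with the minimality of $h$ in $hU$, and it shares the ``translate by $h^{-1}$ to put an endpoint into $U$'' move with Lemma~\ref{Solie--Lemma--Double Coset FT, B_0}. The one detail worth verifying carefully is that the quasiconvexity constant $\epsilon$ depends only on $(G,\mathbb{P})$, $X$, and $u$ and not on $n$ or the coset $hU$; this is immediate because $\epsilon$ is an intrinsic property of the subgroup $U\leq G$.
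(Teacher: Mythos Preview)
Your proposal is correct and follows essentially the same approach as the paper: both set $C_0(k)=k+\epsilon$, use relative quasiconvexity of $U$ to find a point of $hU$ within $d_X$-distance $\epsilon$ of $q$, and then compare relative lengths against the minimality of $h$ in $hU$. The only cosmetic differences are that you phrase the argument as a direct inequality rather than a contradiction and make the left-translation by $h^{-1}$ explicit, whereas the paper applies quasiconvexity to the coset $hU$ without spelling out the translation.
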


\begin{figure}
  \begin{center}
    \includegraphics{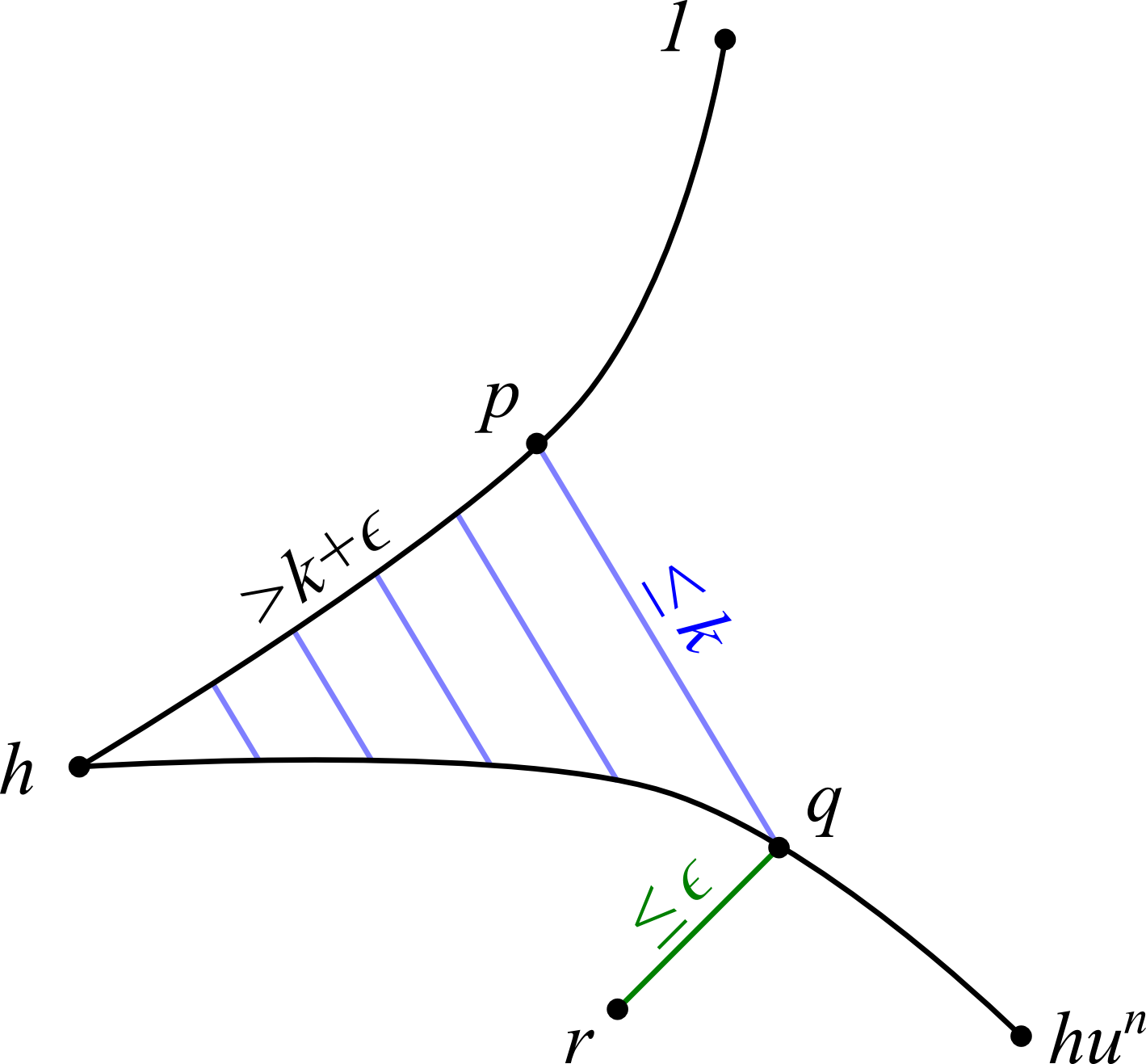}
  \end{center}
  \caption{Finding a shorter coset representative in Lemma \ref{Solie--Lemma--Coset FT, C_0}}
  \label{Figure--C_0 Lemma}
\end{figure}

\begin{proof}
  Suppose that for fixed $k$ and $n$, $[h,1]_\xup$ and $[h,hu^n]_\xup$ absolutely $k$-fellow travel for longer than $k+\epsilon$.
  Then there is a vertex $p \in [h,1]_\xup$ with $d_\xup(h,p) > k+\epsilon$ and such that there exists $w \in [h, hu^n]_\xup$ with $d_X(p,q) \leq k$.
  Since $U$ is relatively quasiconvex with constant $\epsilon$, there is a vertex $r \in hU$ with $d_X(q,r) \leq \epsilon$.
  Then $[1,p,q,r]_\xup$ is a broken relative geodesic of length at most $d_\xup(1,p)+k+\epsilon < d_\xup(1,h)$, contradicting that $h$ is amongst the $d_\xup$-shortest elements of $hU$. (See Figure \ref{Figure--C_0 Lemma}.)
\end{proof}

\begin{remark}
  The analogous statement holds for elements $h$ which are $\xup$-shortest in the coset $Uh$.
  Moreover, also note that if $h$ is $\xup$-shortest in $UhU$, then $h$ is $\xup$-shortest in both $Uh$ and $hU$.
\end{remark}

\begin{prop}[\cite{Osin2005}]
  \label{Osin--Prop--Relative geodesic triangles have absolute resolutions}
  Let $(G, \mathbb{P})$ be relatively hyperbolic with finite relative generating set $X$.
  There exist constants $\rho, \sigma > 0$ having the following property.

  Let $\Delta$ be a triangle with vertices $x, y, z$ whose sides $[x,y]_\xup, [y,z]_\xup,[x,z]_\xup$ are relative geodesics in $\Cay(G, \xup)$.
  Suppose that $u$ and $v$ are vertices on $[x,y]_\xup$ and $[x,z]_\xup$ respectively such that
  \begin{equation*}
    d_\xup(x,u) = d_\xup(x,v)
  \end{equation*}
  and
  \begin{equation*}
    d_\xup(u,y)+d_\xup(v,z) \geq d_\xup(y,z)+\sigma.
  \end{equation*}
  Then
  \begin{displaymath}
    d_X(u,v) \leq \rho.
  \end{displaymath}
\end{prop}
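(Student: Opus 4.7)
The plan is to combine $\delta$-hyperbolicity of $\Cay(G,\xup)$ (which gives the relative-metric bound) with Osin's bounded coset penetration property (which promotes relative closeness to absolute closeness). First, I would rewrite the hypothesis as a Gromov-product condition: since $[x,y]_\xup$ and $[x,z]_\xup$ are relative geodesics,
\begin{equation*}
d_\xup(u,y) = d_\xup(x,y) - d_\xup(x,u), \qquad d_\xup(v,z) = d_\xup(x,z) - d_\xup(x,v),
\end{equation*}
so the assumption $d_\xup(u,y)+d_\xup(v,z)\geq d_\xup(y,z)+\sigma$ rearranges to
\begin{equation*}
d_\xup(x,u) = d_\xup(x,v) \leq (y\mid z)_x - \sigma/2,
\end{equation*}
where $(y\mid z)_x$ is the Gromov product of $y,z$ based at $x$.

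Next, I would invoke the standard $\delta$-thin triangle inequality in the hyperbolic space $\Cay(G,\xup)$: when two points lie on geodesics issuing from a common vertex at equal parameter value strictly less than $(y\mid z)_x - \delta$, they are at distance at most $2\delta$. Choosing $\sigma \geq 2\delta$ therefore delivers the relative bound $d_\xup(u,v) \leq 2\delta$.

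The main obstacle is the last step: upgrading $d_\xup(u,v) \leq 2\delta$ to an absolute bound $d_X(u,v) \leq \rho$. A single edge of $\Cay(G,\xup)$ labelled by an element of $\mathcal{P}$ can represent an arbitrarily long $X$-path, so relative closeness does not automatically control absolute length. To get around this, I would pick any relative geodesic $\pi$ from $u$ to $v$ of length $\leq 2\delta$ and decompose it into at most $2\delta$ consecutive pieces consisting of $X$-edges and $P_\lambda$-components (maximal subpaths with labels in a single parabolic $P_\lambda$). Each $P_\lambda$-component lies in a definite left coset $gP_\lambda$, and the goal is to bound its absolute length uniformly. The key input is Osin's bounded coset penetration property: if such a component had very long absolute length, then applied to the relative geodesic quadrilateral with sides contained in $[x,y]_\xup \cup \pi \cup [x,z]_\xup$, the same coset $gP_\lambda$ would have to appear as a long component of $[x,y]_\xup$ or $[x,z]_\xup$ near positions matching the endpoints of the component in $\pi$. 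One then uses the entry/exit vertices of this parabolic to reroute the triangle and produce a shorter relative geodesic between corresponding points, contradicting either the geodesic hypothesis on the sides or the equality $d_\xup(x,u)=d_\xup(x,v)$.

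Summing the resulting uniform absolute-length bound over at most $2\delta$ parabolic components, together with the at most $2\delta$ singleton $X$-edges in $\pi$, yields the absolute bound $\rho$ depending only on $\delta$ and the BCP constants of the finite relative presentation. The hard part is Step 3: controlling how much absolute length the short relative path $\pi$ can hide inside parabolic cosets, which is precisely what BCP is designed to rule out once one has the geodesic structure on the two sides of the triangle.
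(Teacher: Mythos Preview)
The paper does not prove this proposition at all: it is quoted directly from Osin \cite{Osin2005} and used as a black box, so there is no argument in the paper to compare your proposal against.

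Your sketch is a reasonable outline of how one might reprove Osin's result. Steps 1 and 2 (rewriting the hypothesis as a Gromov-product bound and using $\delta$-thinness to get $d_{\xup}(u,v)\leq 2\delta$) are correct and routine. Step 3, however, is where the actual content lies, and your description is more of a heuristic than an argument. BCP is stated for pairs of relative quasi-geodesics with common endpoints, not for quadrilaterals, so you would need to specify precisely to which pair of paths you apply it; the ``rerouting to produce a shorter geodesic'' contradiction you allude to is not spelled out, and it is not clear that a long parabolic component of $\pi$ must be connected to a component of one of the sides without further work (the sides $[x,y]_{\xup}$ and $[x,z]_{\xup}$ share only the vertex $x$, so one has to argue via a bigon or a careful comparison with the third side $[y,z]_{\xup}$). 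Osin's own proof in \cite{Osin2005} handles this by a direct analysis of phase vertices and backtracking in relative geodesic polygons rather than by the quadrilateral argument you suggest. If you want a self-contained proof you should consult that reference; for the purposes of this paper the statement is simply imported.
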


\begin{figure}
  \begin{center}
    \includegraphics{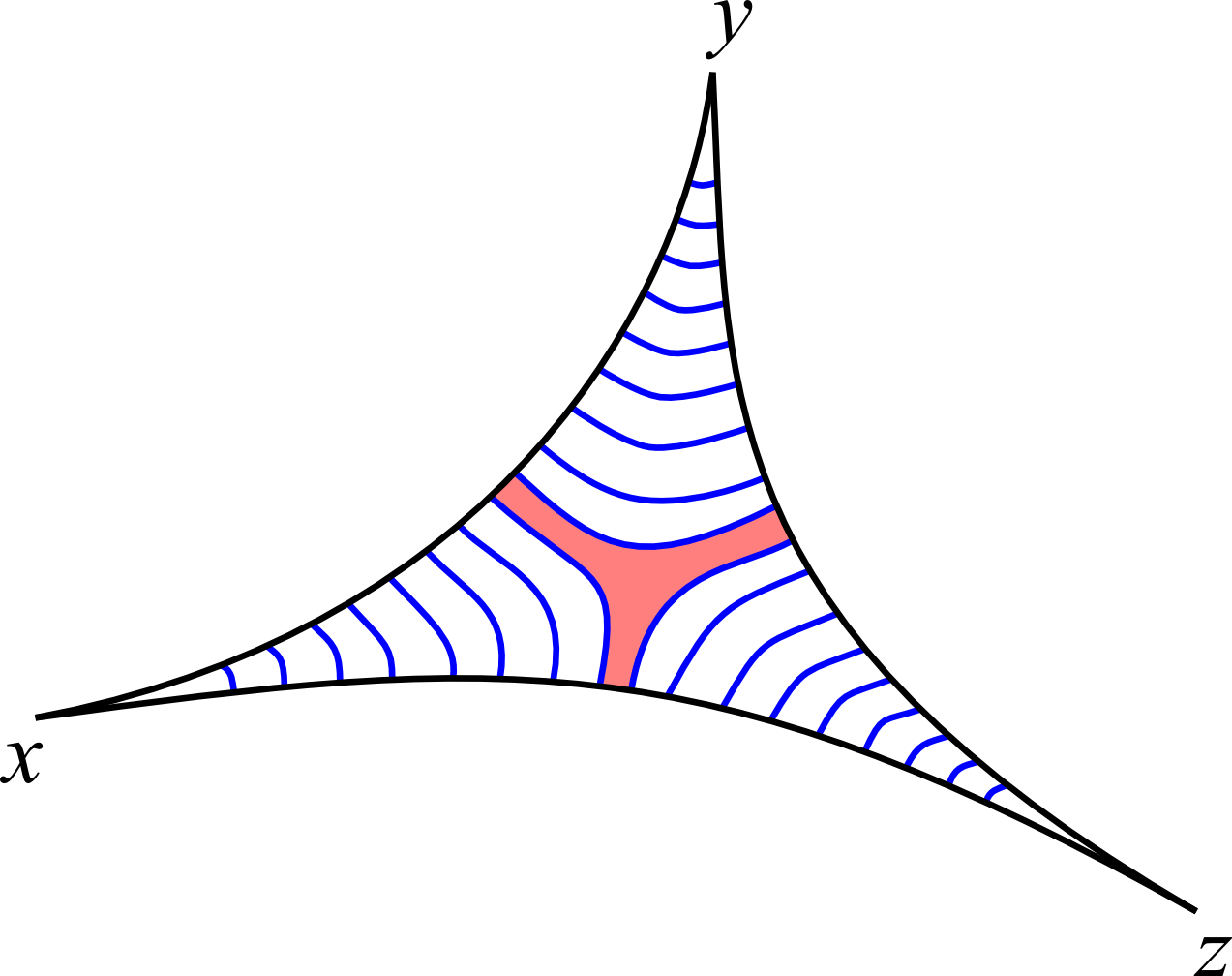}
  \end{center}
  \caption[A relative geodesic triangle.]{A relative geodesic triangle. The shaded lines join pairs of points on the triangle which are $\rho$-close in the absolute metric. The shaded area represents the region where the absolute $\rho$-fellow traveling property may fail.}
\end{figure}

Recall that if $x, y,$ and $z$ are vertices in $\Cay(G, \xup)$, then the \emph{Gromov inner product} is defined as
\begin{equation*}
  \langle y|z \rangle_x := \dfrac{1}{2}(d_\xup(x,y) + d_\xup(x,z) - d_\xup(y,z)).
\end{equation*}

\begin{cor}
  \label{Solie--Corollary--Relative geodesic triangles absolutely FT}
  Let $\rho, \sigma, x, y, z$ be as in Proposition \ref{Osin--Prop--Relative geodesic triangles have absolute resolutions}.
  Then adjacent sides $[x,y]_\xup$ and $[x,z]_\xup$ absolutely $\rho$-fellow travel for length at least $\langle y|z \rangle_x - \sigma/2$.
\end{cor}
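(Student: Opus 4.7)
The plan is to apply Proposition \ref{Osin--Prop--Relative geodesic triangles have absolute resolutions} pointwise. Given a parameter $t \geq 0$, let $u = u(t)$ be the vertex on $[x,y]_\xup$ with $d_\xup(x,u) = t$ and let $v = v(t)$ be the vertex on $[x,z]_\xup$ with $d_\xup(x,v) = t$. To invoke the proposition we need the second hypothesis,
\begin{equation*}
  d_\xup(u,y) + d_\xup(v,z) \geq d_\xup(y,z) + \sigma,
\end{equation*}
so the main task is to translate this into an explicit bound on $t$.

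Since $u$ lies on the relative geodesic $[x,y]_\xup$, we have $d_\xup(u,y) = d_\xup(x,y) - t$, and similarly $d_\xup(v,z) = d_\xup(x,z) - t$. Substituting, the required inequality becomes
\begin{equation*}
  d_\xup(x,y) + d_\xup(x,z) - 2t \geq d_\xup(y,z) + \sigma,
\end{equation*}
which rearranges, using the definition of the Gromov inner product, to $t \leq \langle y|z\rangle_x - \sigma/2$. Thus for every integer $t$ in the interval $[0, \langle y|z\rangle_x - \sigma/2]$, Proposition \ref{Osin--Prop--Relative geodesic triangles have absolute resolutions} applies and gives $d_X(u(t),v(t)) \leq \rho$, which is precisely the assertion that the two sides absolutely $\rho$-fellow travel for length at least $\langle y|z\rangle_x - \sigma/2$.

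There is no substantive obstacle; the corollary is essentially an algebraic repackaging of the proposition in terms of the Gromov inner product. The only mild wrinkle is to note that $\langle y|z\rangle_x \leq \min\{d_\xup(x,y), d_\xup(x,z)\}$ by the triangle inequality, so whenever the claimed fellow-traveling length is nonnegative the points $u(t)$ and $v(t)$ actually lie on the respective geodesic segments; if $\langle y|z\rangle_x - \sigma/2 < 0$ the statement is vacuous.
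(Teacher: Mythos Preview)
Your proof is correct and follows essentially the same approach as the paper: both introduce a common-distance parameter ($t$ for you, $\ell$ in the paper), use that the points lie on geodesics to rewrite $d_\xup(u,y)+d_\xup(v,z)$ as $d_\xup(x,y)+d_\xup(x,z)-2t$, and then rearrange the second hypothesis of Proposition~\ref{Osin--Prop--Relative geodesic triangles have absolute resolutions} into the bound $t \leq \langle y|z\rangle_x - \sigma/2$. Your added remark handling the vacuous case and verifying that $u(t),v(t)$ actually lie on the segments is a small bonus the paper omits.
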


\begin{proof}
  Let $u \in [x,y]_\xup$ and $v \in [x,z]_\xup$ be such that $d_\xup(x,u)=d_\xup(x,v)=\ell$ and $d_\xup(u,y)+d_\xup(v,z) \geq d_\xup(y,z)+\sigma.$
  We then have
  \begin{displaymath}
    d_\xup(u,y)+d_\xup(v,z) = d_\xup(x,y)+d_\xup(x,z)-2\ell.
  \end{displaymath}
  Further,
  \begin{align*}
    d_\xup(x,y)+d_\xup(x,z) - 2\ell & \geq d_\xup(y,z) + \sigma \\
    d_\xup(x,y)+d_\xup(x,z)-d_\xup(y,z) - 2\ell & \geq \sigma\\
    2 \langle y|z \rangle_x - 2\ell & \geq \sigma \\
    \langle y|z \rangle_x - \sigma/2 & \geq \ell.
  \end{align*}

  Therefore, if $\ell \leq \langle y|z \rangle_x - \sigma/2$, then $u$ and $v$ satisfy the hypotheses of Proposition \ref{Osin--Prop--Relative geodesic triangles have absolute resolutions} and are therefore $\rho$-close in the absolute metric.
\end{proof}

For a given relative geodesic triangle with vertices $x, y, z$, the \emph{center} of the side $[x,y]_\xup$ is the point $c \in [x,y]_\xup$ such that $d_\xup(x,c) = \langle y|z \rangle_x$ and $d_\xup(y,x) = \langle x|z \rangle_y$.

\begin{lem}
  \label{Solie--Lemma--U Lemma, F_0}
  Let $u \in G$ generate a maximal cyclic hyperbolic subgroup $U$.
  Let $g \in G$, and let $h \in G$ be a $\xup$-shortest element of $UgU$.
  There is a constant $F_0$ depending only on $(G,\mathbb{P})$, $X$, and $u$ such that the following holds.

  Suppose that we have $m$ and $n$ such that $g = u^m h u^n$.
  Then $[1,u^m]_\xup$ and $[u^mh,u^mhu^n]_\xup$ each absolutely $2\rho$-fellow travel $[1,u^mhu^n]_\xup$ from their respective shared endpoints for all but at most $F_0$ of their length.
\end{lem}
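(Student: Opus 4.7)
The plan is to deduce both fellow-traveling assertions from two applications of Corollary \ref{Solie--Corollary--Relative geodesic triangles absolutely FT} chained through an intermediate geodesic, after bounding certain cancellation quantities by a constant depending only on $(G, \mathbb{P})$, $X$, and $u$. For the first assertion, the chain combines the Corollary applied to the triangle $(1, u^m, u^m h)$---which yields absolute $\rho$-fellow traveling of $[1, u^m]_\xup$ and $[1, u^m h]_\xup$ from $1$ for length at least $|u^m|_\xup - J_1/2 - \sigma/2$, where $J_1 := |u^m|_\xup + |h|_\xup - |u^m h|_\xup$---with the Corollary applied to the triangle $(1, u^m h, g)$, which yields absolute $\rho$-fellow traveling of $[1, u^m h]_\xup$ and $[1, g]_\xup$ from $1$ for length at least $|u^m h|_\xup - K_{\mathrm{right}}/2 - \sigma/2$, where $K_{\mathrm{right}} := |u^m h|_\xup + |u^n|_\xup - |g|_\xup$. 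Since each geodesic is uniquely parameterized by $d_\xup$-distance from $1$, the triangle inequality in the absolute metric then upgrades these two $\rho$-estimates to absolute $2\rho$-fellow traveling of $[1, u^m]_\xup$ with $[1, g]_\xup$ from $1$, of length equal to the minimum of the two, which will be at least $|u^m|_\xup - F_0$ once $J_1$ and $K_{\mathrm{right}}$ are bounded. The second assertion is symmetric: it chains through $[u^m, g]_\xup$ starting at $g$, via the triangles $(u^m, u^m h, g)$ and $(1, u^m, g)$, and involves the analogous quantities $J_2 := |h|_\xup + |u^n|_\xup - |h u^n|_\xup$ and $K_{\mathrm{left}} := |u^m|_\xup + |h u^n|_\xup - |g|_\xup$.

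To bound $J_1$ (and symmetrically $J_2$), I apply Lemma \ref{Solie--Lemma--Coset FT, C_0} together with the analogous statement for $Uh$-shortest elements noted in the Remark following it. Since $h$ is $\xup$-shortest in $UhU$, it is also shortest in both $hU$ and $Uh$. The $Uh$-shortness of $h$ is equivalent to $h^{-1}$ being $h^{-1}U$-shortest; applying Lemma C_0 to $h^{-1}$ and translating by $u^m h$ shows that $[u^m, u^m h]_\xup$ and $[u^m, 1]_\xup$ absolutely $\rho$-fellow travel from $u^m$ for at most $C_0(\rho)$. Meanwhile, the Corollary applied to the triangle $(1, u^m, u^m h)$ at the vertex $u^m$ gives the complementary lower bound, namely absolute $\rho$-fellow traveling of at least $\langle 1 | u^m h \rangle_{u^m} - \sigma/2 = J_1/2 - \sigma/2$. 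Combining these two inequalities yields $J_1 \leq 2 C_0(\rho) + \sigma$; the same argument applied at the vertex $u^m h$ in the triangle $(u^m, u^m h, g)$, using the $hU$-shortness of $h$, yields $J_2 \leq 2 C_0(\rho) + \sigma$.

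The main obstacle is bounding $K_{\mathrm{left}}$ and $K_{\mathrm{right}}$, equivalently the total cancellation $J := |u^m|_\xup + |h|_\xup + |u^n|_\xup - |g|_\xup$; note that $K_{\mathrm{left}} = J - J_2$ and $K_{\mathrm{right}} = J - J_1$. With the bounds from the previous step, the broken relative geodesic $[1, u^m, u^m h, g]_\xup$ has exactly three segments with Gromov-product backtracking at most $C_0(\rho) + \sigma/2$ at each of its two internal vertices $u^m$ and $u^m h$. A standard local-to-global stability argument for quasigeodesics in the $\delta$-hyperbolic graph $\Cay(G, \xup)$ then shows that such a bounded-backtracking broken geodesic with a uniformly bounded number of segments is a $(1, c)$-quasigeodesic, with $c$ depending only on $\delta$, $C_0(\rho)$, and $\sigma$; hence its length exceeds $|g|_\xup$ by at most $c$, giving $J \leq c$ and consequently $K_{\mathrm{left}}, K_{\mathrm{right}} \leq c$. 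Choosing $F_0$ large enough to absorb $c$, $C_0(\rho)$, $\sigma$, and the requisite factors of $2$ then completes both fellow-traveling claims. In lieu of invoking local-to-global stability, the same bound on $J$ can instead be derived by applying the thin-triangle condition directly to the triangles $(1, u^m h, g)$ and $(1, u^m, u^m h)$ and combining with the $J_1, J_2$ bounds, at the cost of some additional bookkeeping.
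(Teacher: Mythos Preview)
Your bounds on $J_1$ and $J_2$ via Lemma~\ref{Solie--Lemma--Coset FT, C_0} are correct, and chaining two triangles through the diagonal $[1,u^mh]_\xup$ is essentially what the paper does. The gap is in the step where you bound $J$ (equivalently $K_{\mathrm{left}}$ and $K_{\mathrm{right}}$). The assertion that a three-segment broken geodesic in a $\delta$-hyperbolic space with Gromov products at the two internal vertices bounded by $K$ is automatically a $(1,c(K,\delta))$-quasigeodesic is \emph{false} when the middle segment is short. In the upper half-plane, take $p_0=(0,e^L)$, $p_1=(0,1)$, $p_2=(\epsilon,1)$, $p_3=(\epsilon,e^L)$: both inner products $\langle p_0|p_2\rangle_{p_1}$ and $\langle p_1|p_3\rangle_{p_2}$ are uniformly bounded (of order $\epsilon$), yet $d(p_0,p_3)\to 0$ while the broken length is $2L+O(1)$, so $J\to\infty$. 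The four-point inequality confirms this: it yields either $J\le 4K+2\delta$ or $|h|\le 2K+\delta$, and in the second alternative you get no control on $J$ at all. Since nothing in the hypotheses forces $|h|_\xup$ to be large, your local-to-global argument (and the alternative ``direct thin-triangle'' argument you sketch) cannot close this case.

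What rules out this collapse in the actual setting is group-theoretic, not purely metric: the two long sides $[1,u^m]_\xup$ and $[u^mh,u^mhu^n]_\xup$ lie along distinct left cosets $U$ and $u^mhU$ of the quasiconvex cyclic subgroup $U$. The paper's proof decomposes $[1,u^m]_\xup$ so that, beyond the portion that $2\rho$-fellow travels $[1,g]_\xup$, there is a subsegment that $2\rho$-fellow travels $[u^m,u^mh]_\xup$ (bounded by $C_0(2\rho)$, as you also use), two gaps of length at most $\sigma$, and a subsegment that $2\rho$-fellow travels a piece of the \emph{opposite} $u$-side $[u^mh,g]_\xup$. It is Lemma~\ref{Solie--Lemma--Double Coset FT, B_0} that bounds this last subsegment by $B_0(2\rho)$, and this is the ingredient your argument is missing. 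Once you insert an appeal to $B_0$ to bound how long $[1,u^m]_\xup$ can track $[u^mh,g]_\xup$, your overall strategy goes through and yields $F_0=B_0(2\rho)+C_0(2\rho)+2\sigma$, matching the paper.
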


\begin{proof}
  Let $Q$ be the relative geodesic quadrilateral with sides $[1,u^m]_\xup$,  ${[u^m, u^mh]_\xup}$, $[u^mh,u^mhu^n]_\xup$, and $[1,u^mhu^n]_\xup$.


  By drawing a relative geodesic diagonal for $Q$, we obtain two relative geodesic triangles.
  As in Proposition \ref{Osin--Prop--Relative geodesic triangles have absolute resolutions}, every pair of sides in either of these triangles absolutely $\rho$-fellow travel from their common vertex for a length of at least their Gromov inner product minus $\sigma/2$.

  \begin{figure}
    \begin{center}
      \includegraphics{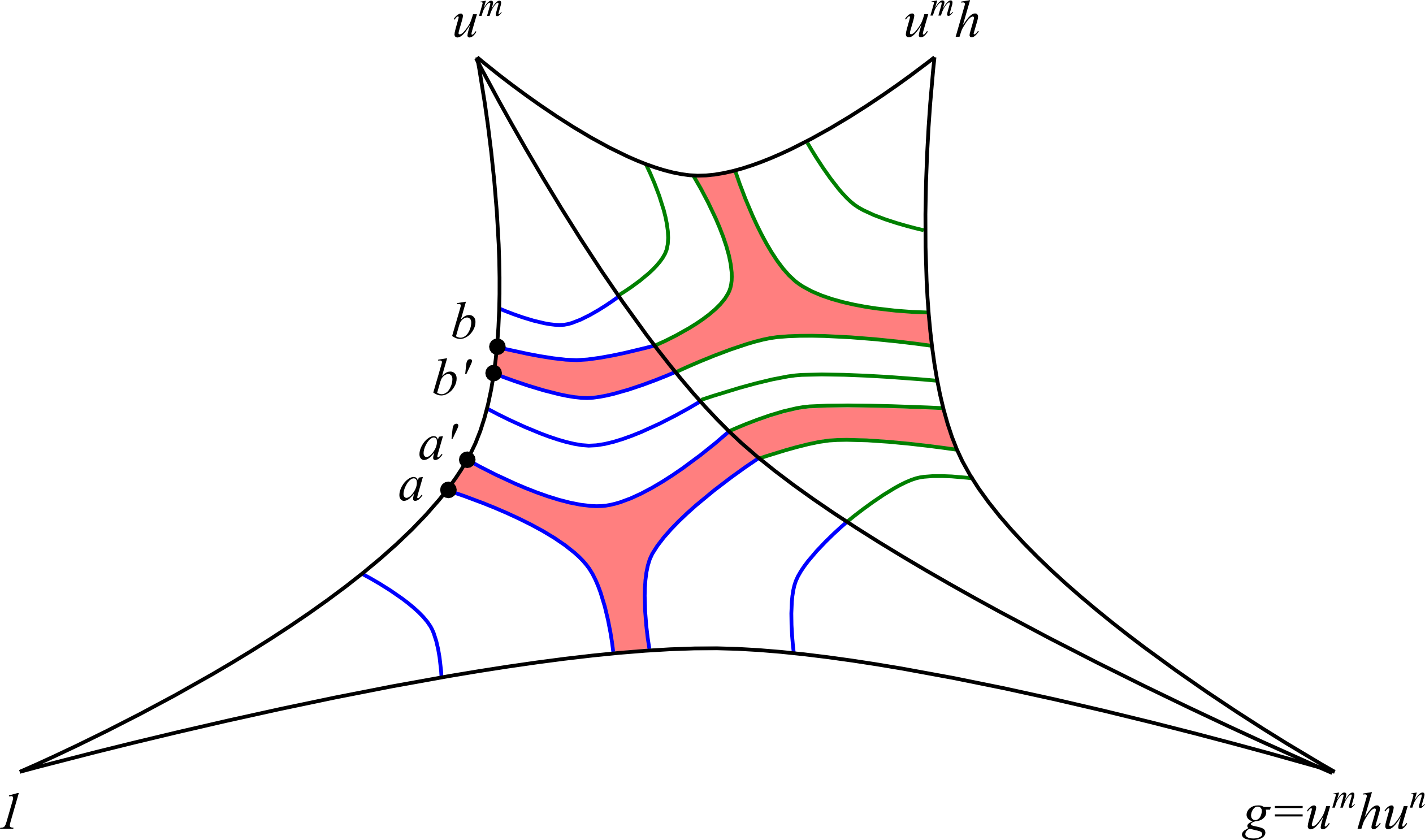}
    \end{center}
    \caption{A decomposition of $Q$ and one of its sides.}
    \label{Fig--Quadrilateral decomposition}
  \end{figure}

  We extend the fellow-traveling property of the sides of these triangles to the sides of $Q$. (See Figure \ref{Fig--Quadrilateral decomposition} for one configuration of such an extension; the shaded area represents the area near the centers of the triangles where absolute fellow traveling is not guaranteed.)
  We see that there exist vertices $a,a',b,b' \in [1,u^m]_\xup$ such that:
  \begin{enumerate}
    \item
      The subpath $[1,a]_\xup$ and some initial subpath of $[1,u^mhu^n]_\xup$ absolutely $2\rho$-fellow travel;
    \item
      The subpath $[u^m,b]_\xup$ and some initial subpath of $[u^m,u^mh]_\xup$ absolutely $2\rho$-fellow travel;
    \item
      The subpath $[a',b']_\xup$ absolutely $2\rho$-fellow travels some subpath of $[u^mhu^n,u^mh]_\xup$; and
    \item
      The relative lengths of the subpaths $[a,a']_\xup$ and $[b',b]_\xup$ do not exceed $\sigma$.
  \end{enumerate}

  We are interested in the total length of the subpath $[a,u^m]_\xup$, since, as noted, $[1,a]_\xup$ fellow travels with a subpath of $[1,u^mhu^n]_\xup$.
  Observation (2) above implies that the length of $[u^m,b]_\xup$ is at most $C_0(2\rho)$, by Lemma \ref{Solie--Lemma--Coset FT, C_0}.
  Observation (3) implies that the length of $[a',b']_\xup$ is at most $B_0(2\rho)$, by Lemma \ref{Solie--Lemma--Double Coset FT, B_0}.

  Consequently, we have that
  \begin{displaymath}
    \len([a,u^m]_\xup) \leq B_0(2\rho) + C_0(2\rho) + 2\sigma =: F_0.
  \end{displaymath}
\end{proof}

\begin{lem}
  \label{Solie--Cor--Passing to minimal double coset representatives}
  Let $u, g, h, m,$ and $n$ be as in Lemma \ref{Solie--Lemma--U Lemma, F_0}.
  Then we have
  \begin{displaymath}
    \len([1,u^m,u^mh,u^mhu^n]_\xup) \leq 3|g|_\xup + 2F_0.
  \end{displaymath}
\end{lem}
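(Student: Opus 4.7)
The plan is to bound each of the three relative-geodesic segments composing $[1, u^m, u^m h, u^m h u^n]_\xup$ separately and then sum the bounds. By left-invariance of the relative metric,
\begin{equation*}
\len([1, u^m, u^m h, u^m h u^n]_\xup) = |u^m|_\xup + |h|_\xup + |u^n|_\xup,
\end{equation*}
so the goal reduces to bounding each of these three quantities by $|g|_\xup$ plus a constant.

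First, I would handle $|u^m|_\xup$ using Lemma \ref{Solie--Lemma--U Lemma, F_0}. That lemma says $[1, u^m]_\xup$ absolutely $2\rho$-fellow travels $[1, u^m h u^n]_\xup = [1,g]_\xup$ from the shared endpoint $1$ for all but at most $F_0$ of its length. Because fellow-traveling in this paper is synchronous (see the Remark following Definition \ref{Defn--Fellow traveling}), the initial subpath of $[1,u^m]_\xup$ of length $|u^m|_\xup - F_0$ is paired parameter-for-parameter with an initial subpath of $[1,g]_\xup$ of equal length. Since that matched subpath of $[1,g]_\xup$ has length at most $\len([1,g]_\xup) = |g|_\xup$, we obtain $|u^m|_\xup - F_0 \leq |g|_\xup$, i.e. $|u^m|_\xup \leq |g|_\xup + F_0$. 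The identical argument applied to the other corner of the quadrilateral in Lemma \ref{Solie--Lemma--U Lemma, F_0} — namely $[u^m h, u^m h u^n]_\xup$ fellow-traveling $[1, g]_\xup$ from the shared endpoint $u^m h u^n$ — gives $|u^n|_\xup \leq |g|_\xup + F_0$.

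Next, for the middle segment $|h|_\xup$, the bound is immediate from the hypothesis: $h$ was chosen as an $\xup$-shortest element of $UgU$, and $g \in UgU$, so $|h|_\xup \leq |g|_\xup$. Summing the three bounds yields
\begin{equation*}
|u^m|_\xup + |h|_\xup + |u^n|_\xup \leq (|g|_\xup + F_0) + |g|_\xup + (|g|_\xup + F_0) = 3|g|_\xup + 2F_0,
\end{equation*}
which is the claim.

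There is essentially no obstacle here — the statement is a direct accounting consequence of Lemma \ref{Solie--Lemma--U Lemma, F_0} combined with the minimality of $h$ in $UgU$. The only subtle point worth emphasizing in the write-up is the use of synchronous fellow-traveling to conclude that the fellow-traveled initial subpath of $[1,g]_\xup$ has the same length as the fellow-traveled initial subpath of $[1,u^m]_\xup$; everything else is arithmetic.
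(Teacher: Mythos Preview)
Your proof is correct and follows essentially the same approach as the paper: bound $|u^m|_\xup$ and $|u^n|_\xup$ by $|g|_\xup + F_0$ via Lemma~\ref{Solie--Lemma--U Lemma, F_0}, bound $|h|_\xup$ by $|g|_\xup$ via minimality in $UgU$, and sum. Your explicit justification of the length comparison via synchronous fellow-traveling is a nice touch that the paper leaves implicit.
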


\begin{proof}
  The lengths of the subpaths $[1,u^m]_\xup$ and $[u^mh,u^mhu^n]_\xup$ are bounded above by $|g|_\xup+F_0$ by Lemma \ref{Solie--Lemma--U Lemma, F_0}.
  Since $h$ is a $\xup$-shortest representative of $UgU$, we have $|h|_\xup \leq |g|_\xup$, and so the length of $[u^m,u^mh]_\xup$ is at most $|g|_\xup$.
\end{proof}

Let $\mathbf{r} = (r_0, r_1, \dots, r_k)$ be a tuple of integers.
We define
\begin{equation*}
  \min(\mathbf{r}) := \min_i |r_i|.
\end{equation*}

\begin{lem}
  \label{Solie--Lemma--Padding for short multiwords}
  Let $(G, \mathbb{P})$ be a relatively hyperbolic group with finite generating set $X$, and let $U$ be a subgroup generated by a hyperbolic element $u \in G$.
  There exists a positive integer $N_0$ depending only on $(G, \mathbb{P})$, $X$, and $u$ such that the following holds.

  Let $\mathbf{h} = (h_1, h_2, \dots, h_k)$ be a tuple of elements of $X$ such that each $h_i$ is $\xup$-shortest in the double coset $U h_i U \neq U$, and let $\mathbf{r} = (r_0, r_1, \dots, r_k)$ be a tuple of integers.
  Define
  \begin{equation*}
    w_{\mathbf{h}}(\mathbf{r}) := u^{r_0} h_1 u^{r_1} h_2 u^{r_2} \cdots u^{r_{k-1}} h_k u^{r_k}.
  \end{equation*}
  Then $w_{\mathbf{h}}(\mathbf{r}) \neq 1$ in $G$ for all $\mathbf{r}$ such that $\min(\mathbf{r}) > N_0$.
\end{lem}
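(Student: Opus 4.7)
I argue by contradiction. Suppose $w := w_{\mathbf{h}}(\mathbf{r}) = 1$ for some $\mathbf{r}$ with $\min(\mathbf{r}) > N_0$ (to be chosen below). Set $a_0 := 1$, and inductively $a_{2i+1} := a_{2i} u^{r_i}$ for $i = 0, \ldots, k$ and $a_{2i} := a_{2i-1} h_i$ for $i = 1, \ldots, k$. The assumption $w = 1$ forces $a_{2k+1} = a_0 = 1$, so the broken relative geodesic $P := [a_0, a_1, \ldots, a_{2k+1}]_\xup$ is a closed loop based at $1$. Write $\ell_i := d_\xup(1, u^{r_i})$, so by Proposition~\ref{Osin--Prop--U is qie, lambda_u, c_u} we have $\ell_i \geq |r_i|/\lambda_u - c_u$; the total relative length of $P$ is $\ell := \sum_{i=0}^k \ell_i + k$, which grows linearly in $\min(\mathbf{r})$ (uniformly in $k$).

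The heart of the proof is to show that once $N_0$ is chosen large enough relative to $F_0, \rho, \sigma, \lambda_u, c_u$, the broken geodesic $P$ is a relative $(\lambda_0, c_0)$-quasi-geodesic with constants $\lambda_0, c_0$ depending only on $(G, \mathbb{P})$, $X$, and $u$, and \emph{independent} of $k$. Granting this, the closed-loop hypothesis forces $\ell \leq \lambda_0 \cdot d_\xup(a_0, a_{2k+1}) + c_0 = c_0$; but $\ell \geq (k+1)(\min(\mathbf{r})/\lambda_u - c_u) + k$ exceeds $c_0$ once $\min(\mathbf{r}) > \lambda_u(c_0 + c_u)$. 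Choosing $N_0$ above this threshold yields the contradiction.

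To establish the quasi-geodesic property, I apply Lemma~\ref{Solie--Lemma--U Lemma, F_0} to each four-vertex subpath $[a_{2i}, a_{2i+1}, a_{2i+2}, a_{2i+3}]_\xup$, which represents the double coset element $g_i := u^{r_i} h_{i+1} u^{r_{i+1}}$ (with $h_{i+1}$ an $\xup$-shortest representative of $U h_{i+1} U$ by hypothesis). The lemma yields that both $u$-segments $[a_{2i}, a_{2i+1}]_\xup$ and $[a_{2i+2}, a_{2i+3}]_\xup$ absolutely $2\rho$-fellow travel the direct geodesic $[a_{2i}, a_{2i+3}]_\xup$ from their shared endpoints, except for tails of length at most $F_0$. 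Applying the lemma also to the previous subpath $[a_{2i-2}, a_{2i-1}, a_{2i}, a_{2i+1}]_\xup$ shows that $[a_{2i}, a_{2i+1}]_\xup$ similarly fellow travels $[a_{2i-2}, a_{2i+1}]_\xup$ from endpoint $a_{2i+1}$ up to an $F_0$-tail. Choosing $N_0$ so that $\min(\mathbf{r}) > N_0$ implies $\ell_i > 2F_0 + 1$, every $u^{r_i}$-segment of $P$ then admits a nonempty middle portion that $2\rho$-fellow travels both adjacent direct geodesics simultaneously.

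\textbf{The main obstacle} is promoting this local fellow-traveling data into the uniform global quasi-geodesic bound. The plan is to show that any proposed relative geodesic ``shortcut'' between two points of $P$ is forced, by Corollary~\ref{Solie--Corollary--Relative geodesic triangles absolutely FT} applied in the $\delta$-hyperbolic space $\Cay(G, \xup)$, to absolutely $\rho$-fellow travel each middle portion of the $u^{r_i}$-segments it bypasses. Since successive $u^{r_i}$-segments of $P$ lie in pairwise distinct cosets of $U$ (any coincidence would contradict $Uh_iU \neq U$ via commutative transitivity in the CSA-group $G$, cf.~Proposition~\ref{Myasnikov-Remeslennikov--Prop--CSA Properties}), Lemma~\ref{Solie--Lemma--Double Coset FT, B_0} caps each such fellow-traveling interval by $B_0(3\rho)$. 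Aggregating over the bypassed segments bounds the length a shortcut can save by a constant multiple of the number of segments traversed, which yields the desired uniform quasi-geodesic constants $\lambda_0, c_0$. This chaining argument is the main technical work; once in hand, the length-vs.-endpoint-distance contradiction above completes the proof.
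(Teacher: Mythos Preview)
Your overall strategy---show the broken geodesic is a uniform quasi-geodesic and derive a length contradiction---matches the paper's, but your decomposition and key lemma differ, and the central step is left as a gap.

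The paper does \emph{not} break at the vertices $a_0,\ldots,a_{2k+1}$. Instead it splits each $u^{r_i}$-block at its \emph{midpoint}, so that each piece $\beta_i$ runs from the midpoint of one $u$-block through $h_i$ to the midpoint of the next. This absorbs the length-$1$ segments $[a_{2i-1},a_{2i}]$ into long pieces of comparable size. The angle at each junction $v_i$ is then controlled not by Lemma~\ref{Solie--Lemma--Double Coset FT, B_0} but by Lemma~\ref{Solie--Lemma--Coset Self FT, E_0}: the two $u$-halves meeting at $v_i$ carry exponents of opposite sign, so they (and hence, via Lemma~\ref{Solie--Lemma--U Lemma, F_0}, the adjacent $\beta_i,\beta_{i+1}$) cannot fellow travel for more than $E_0(4\rho+\delta)$. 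This bounds $\langle v_{i-1}\,|\,v_{i+1}\rangle_{v_i}$ uniformly, so each $\beta_i\cup\beta_{i+1}$ is a $(1,c)$-quasi-geodesic with $c$ independent of $i$ and $k$; since every $\beta_i$ is long, the local-to-global principle (Proposition~\ref{Neumann-Shapiro--Prop--Local quasigeodesics are quasigeodesics}) makes $\beta$ a global quasi-geodesic with uniform constants.

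Your sketch instead tries to bound directly how much a shortcut can save, invoking Lemma~\ref{Solie--Lemma--Double Coset FT, B_0}. The gap is the assertion that a geodesic $[p,q]$ between two points of $P$ is ``forced'' by Corollary~\ref{Solie--Corollary--Relative geodesic triangles absolutely FT} to $\rho$-fellow travel each intermediate middle portion. That corollary handles a single triangle; across many segments the thin-polygon constant degrades with the number of sides, so you cannot conclude uniform fellow-traveling without already knowing $P$ is a quasi-geodesic---which is precisely the goal. Moreover, Lemma~\ref{Solie--Lemma--Double Coset FT, B_0} compares two geodesics lying in \emph{cosets of $U$}; the shortcut $[p,q]$ is not such a geodesic, and if you intend to deduce that adjacent $u$-segments fellow-travel each other via the shortcut, you would also need their fellow-traveling intervals along $[p,q]$ to overlap, which is not argued. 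The paper's midpoint decomposition together with Lemma~\ref{Solie--Lemma--Coset Self FT, E_0}---a lemma your proposal never invokes---is exactly what circumvents this circularity and feeds cleanly into the local-to-global machinery.
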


\begin{proof}
  Let $\alpha$ be a path in $\Cay(G,\xup)$ labeled by
  \begin{displaymath}
    (u^{r_0} * h_1 * u^{\left\lfloor r_1/2 \right\rfloor}) * (u^{\left\lceil r_1/2 \right\rceil} * h_2 * u^{\left\lfloor r_2/2 \right\rfloor}) * \cdots * (u^{\left\lceil r_{k-1}/2 \right\rceil} * h_k * u^{r_k}),
  \end{displaymath}
  where $*$ denotes concatenation of words (as opposed to concatenation followed by free reduction) and $\lfloor \cdot \rfloor, \lceil \cdot \rceil$ are the usual floor and ceiling functions.
  Let $\alpha_1$ be the subpath labeled by $u^{r_0}*h_1*u^{\left\lfloor r_1/2 \right\rfloor}$ and $\alpha_k$ the subpath labeled by $u^{\left\lceil r_{k-1}/2 \right\rceil} * h_k * u^{r_k}$, and for each $i=2, \dots, k-1$, let $\alpha_i$ be the subpath of $\alpha$ labeled by $u^{\left\lceil r_{i-1}/2 \right\rceil}*h_i*u^{\left\lfloor r_i/2 \right\rfloor}$.
  The path $\alpha$ is then the concatenation of the $\alpha_i$.
  Further define the vertices $v_{i-1}$ and $v_i$ to be the endpoints of $\alpha_i$ for each $i$.
  Finally, for each $i$, define $\beta_i$ to be a relative geodesic $[v_{i-1}, v_i]_\xup$, and define $\beta$ to be the broken relative geodesic which is the concatenation of the $\beta_i$.
  (See Figure \ref{Fig--Alpha decomposition}.)

  \begin{figure}
    \begin{center}
      \includegraphics{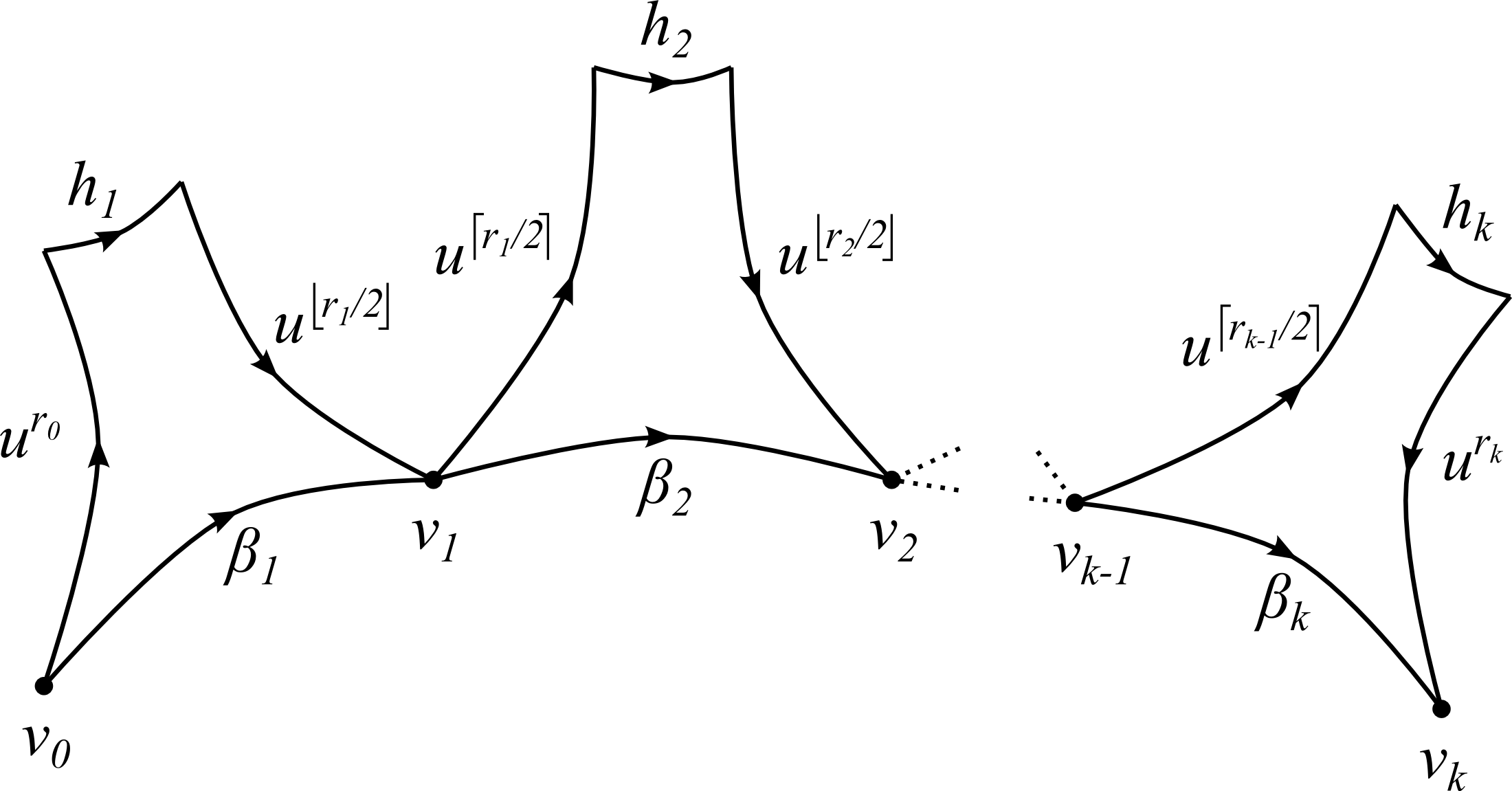}
    \end{center}
    \caption{The decomposition of $\alpha$.}
    \label{Fig--Alpha decomposition}
  \end{figure}

  \begin{lem}
    \label{Solie--Lemma--Length of beta_i}
    For each $i$ and $n$ we have
    \begin{equation}
      \dfrac{2}{\lambda_u} \lfloor \min(\mathbf{r})/2 \rfloor -2c_u - 2F_0 \leq \len(\beta_i).
    \end{equation}
  \end{lem}


  \begin{proof}
    This follows directly from Proposition \ref{Osin--Prop--U is qie, lambda_u, c_u} and Lemma \ref{Solie--Lemma--U Lemma, F_0}.
  \end{proof}

  \begin{prop}
    For all $\mathbf{r}$ with
    \begin{equation}
      \label{Eqn--Constraint on beta fellow traveling}
      \lfloor \min(\mathbf{r})/2 \rfloor > \lambda_u(E_0(4\rho+\delta) + F_0 + c_u)
    \end{equation}
    and $1 \leq i < k$, $\beta_i$ and $\beta_{i+1}$ relatively $\delta$-fellow travel for a length of at most $E_0(4\rho+\delta)$ from their common endpoint $v_i$.
  \end{prop}

  \begin{proof}
    Suppose there is an $\mathbf{r}$ satisfying \eqref{Eqn--Constraint on beta fellow traveling} and $i$ such that $\beta_i$ and $\beta_{i+1}$ relatively $\delta$-fellow travel for a length longer than $E_0(4\rho+\delta)$.
    By construction, there are relative geodesics $\gamma_{i-1}$ and $\gamma_i$ starting at $v_i$ labeled by $u^{-\lfloor r_i/2 \rfloor}$ and $u^{\lceil r_i/2 \rceil}$ respectively.
    These relative geodesics absolutely $2\rho$-fellow travel $\beta_i$ and $\beta_{i+1}$ for all but at most $F_0$ of their length.
    By choice of $\mathbf{r}$ and Corollary \ref{Osin--Prop--U is qie, lambda_u, c_u}, $\gamma_j$ and $\beta_j$ are absolute $2\rho$-fellow travelers for a length of at least $E_0(4\rho+\delta)$ for $j=i,i+1$.

    \begin{figure}
      \begin{center}
        \includegraphics{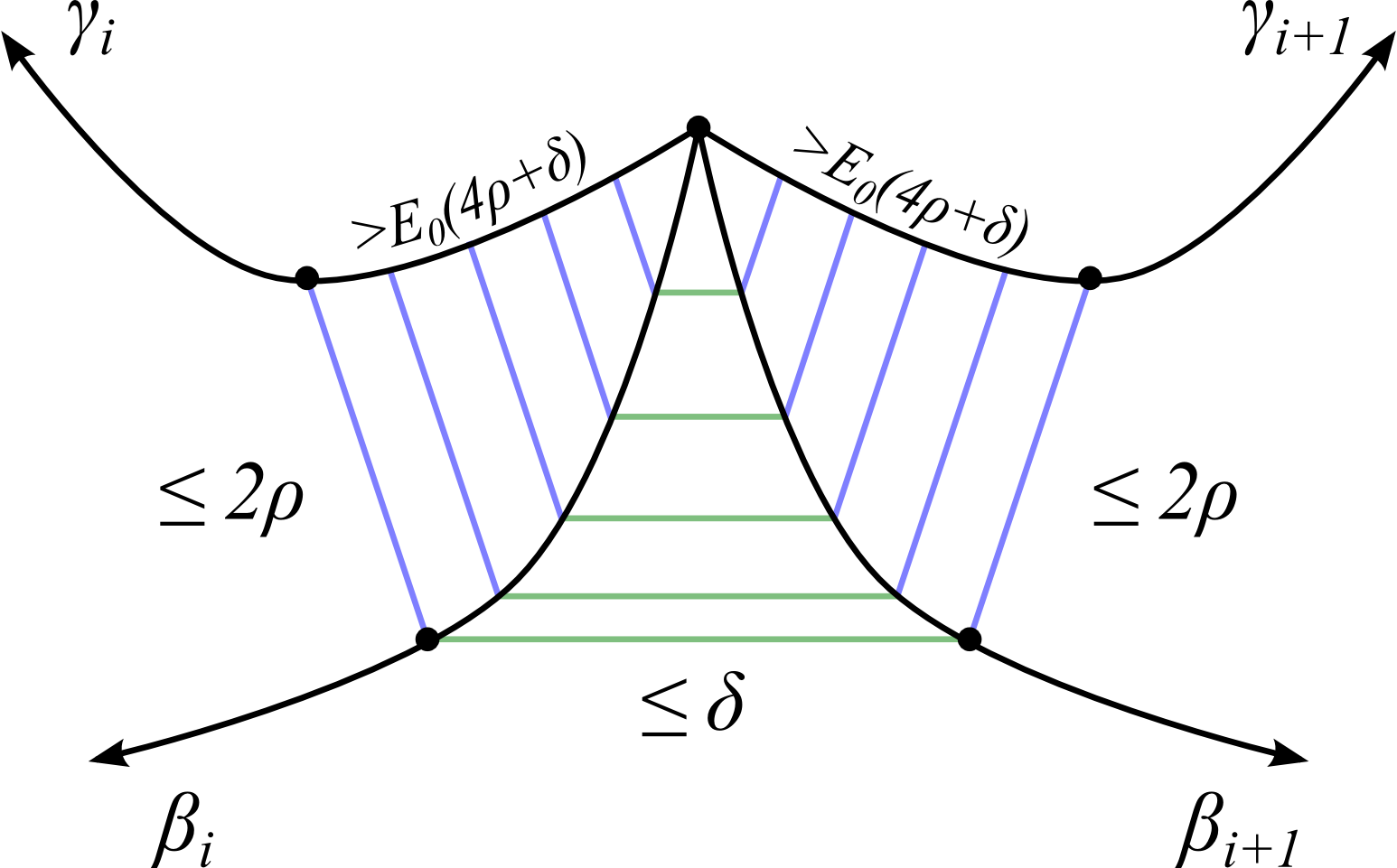}
      \end{center}
      \caption{$\beta_i$ and $\beta_{i+1}$ cannot fellow travel too far without causing $\gamma_i$ and $\gamma_{i+1}$ to fellow travel.}
      \label{Fig--Angle Lemma}
    \end{figure}

    However, if $\beta_i$ and $\beta_{i+1}$ are relative $\delta$-fellow travelers for longer than $E_0(4\rho+\delta)$, then $\gamma_i$ and $\gamma_{i+1}$ are relative $(4\rho+\delta)$-fellow travelers for longer than $E_0(4\rho+\delta)$, contradicting Lemma \ref{Solie--Lemma--Coset Self FT, E_0}. (See Figure \ref{Fig--Angle Lemma}.)
  \end{proof}

  Note that in a relative geodesic triangle, adjacent sides relatively $\delta$-fellow travel for a length of at least the Gromov inner product.
  This fellow traveling property allows us to show that the concatenation of relative geodesic segments is a quasi-geodesic with parameters depending on the Gromov inner product.

  \begin{prop}
    \label{Solie--Prop--Adjacent geodesics are quasigeodesics}
    Let $x,y,z \in \Cay(G,\xup)$.
    Then every subpath of the broken relative geodesic $[x,y,z]_\xup$ is a $(1, 2\langle x|z \rangle_y+2\delta)$-quasigeodesic.
  \end{prop}

  Proposition \ref{Solie--Prop--Adjacent geodesics are quasigeodesics} shows that for $\mathbf{r}$ satisfying \eqref{Eqn--Constraint on beta fellow traveling}, every adjacent pair of relative geodesics $\beta_i$ and $\beta_{i+1}$ is a relative $(1,2E_0(4\rho+\delta)+2\delta)$-quasigeodesic.

  \begin{prop}[{\cite[Lemma 4.8]{Neumann1995}}]
    \label{Neumann-Shapiro--Prop--Local quasigeodesics are quasigeodesics}
    Let $Y$ be a $\delta$-hyperbolic space.
    Given quasigeodesity constants $(\lambda, c)$, there exist $\kappa, \lambda',$ and $c'$ such that every $k$-local $(\lambda, c)$-quasigeodesic is a $(\lambda', c')$-quasigeodesic.
  \end{prop}

  \begin{prop}
    Let $\kappa,\lambda',c'$ be such that in $\Cay(G, \xup)$, every $\kappa$-local $(1,2E_0(4\rho+\delta)+2\delta)$-quasigeodesic is a $(\lambda',c')$-quasigeodesic.
    Let $\mathbf{r}$ satisfy \eqref{Eqn--Constraint on beta fellow traveling} and further assume that
    \begin{equation}
      \label{Eqn--beta longer than kappa}
      \lfloor \min(\mathbf{r})/2 \rfloor \geq \lambda_u \left(\dfrac{\kappa}{2} + c_u + F_0 \right).
    \end{equation}
    Then $\beta$ is a $(\lambda',c')$-quasigeodesic.
  \end{prop}

  \begin{proof}
    By Proposition \ref{Solie--Prop--Adjacent geodesics are quasigeodesics}, for every $i$, the broken geodesic $\beta_i \cup \beta_{i+1}$ is a $(1,2E_0(4\rho+\delta)+2\delta)$-quasigeodesic.
    The inequality \eqref{Eqn--beta longer than kappa} implies that the length of each $\beta_i$ is larger than $\kappa$.
    Every subpath of $\beta$ of length at most $\kappa$ is contained in $\beta_i \cup \beta_{i+1}$ for some $i$, and is therefore a relative $(1,2E_0(4\rho+\delta)+2\delta)$-quasigeodesic.
    The conclusion then follows from applying Proposition \label{Neumann-Shapiro--Prop--Local quasigeodesics are quasigeodesics}.
  \end{proof}

  Now let $\mathbf{r}$ be such that
  \begin{equation}
    \label{Eqn--Constraint quasigeodesic}
    \lfloor \min(\mathbf{r})/2 \rfloor > \lambda_u\left(\dfrac{c'}{2}+c_u+F_0\right).
  \end{equation}
  Then the length of each $\beta_i$ is at least $c'$, and so the length of $\beta$ is at least $c'$.
  The broken relative geodesic $\beta$, which is also a $(\lambda', c')$-quasigeodesic, therefore has necessarily distinct endpoints.
  Since $\alpha$ has the same endpoints as $\beta$ and is labeled by $w_{\mathbf{h}}(\mathbf{r})$, we have $w_{\mathbf{h}}(\mathbf{r}) \neq 1$ in $G$.

  Let $N_{-1}$ be an integer larger than the right hand side in the inequalities \eqref{Eqn--Constraint on beta fellow traveling}, \eqref{Eqn--beta longer than kappa}, and \eqref{Eqn--Constraint quasigeodesic}.
  Pick an integer $N_0$ such that $N_0 > 2N_{-1}+2$.
  Then for all $\mathbf(r)$ with $\min(\mathbf{r}) > N_0$, we have that $\lfloor \min(\mathbf{r})/2 \rfloor > N_{-1}$.
  Thus $N_0$ is the promised constant.
\end{proof}

\begin{lem}
  \label{Solie--Lemma--Padding for multiwords}
  Let $(G, \mathbb{P})$ be a relatively hyperbolic group with finite generating set $X$, and let $U$ be a subgroup generated by a hyperbolic element $u \in G$.
  There is a linear function $N_1: \mathbb{N} \rightarrow \mathbb{N}$ such that the following holds.

  Let $\mathbf{g}=(g_1, g_2, \dots, g_k)$ be a tuple of $X$-words such that $\ds{\sum_{i=1}^k |g_i|_X} \leq R$ and $g_i \in G-U$ for all $i$.
  For any tuple of integers $\mathbf{r} = (r_0, \dots, r_k)$, define
  \begin{equation}
    \label{Eqn--Padded general multiword}
    w_{\mathbf{g}}(\mathbf{r}) := u^{r_0} g_1 u^{r_1} g_2 u^{r_2} \cdots u^{r_{k-1}} g_k u^{r_k}.
  \end{equation}
  Then we have $w_{\mathbf{g}}(\mathbf{r}) \neq 1$ in $G$ for all $\mathbf{r}$ such that $\min(\mathbf{r}) > N_1(R)$.
\end{lem}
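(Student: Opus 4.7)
The plan is to reduce Lemma \ref{Solie--Lemma--Padding for multiwords} to the previously established Lemma \ref{Solie--Lemma--Padding for short multiwords} by replacing each factor $g_i$ with its canonical double-coset representative $h_i$ and absorbing the resulting powers of $u$ into the padding exponents. The hypothesis $g_i \in G - U$ guarantees $U g_i U \neq U$, so for each $i$ we may write $g_i = u^{a_i} h_i u^{b_i}$ where $h_i$ is an $\xup$-shortest element of $U g_i U$. Note $h_i \notin U$, so the tuple $\mathbf{h} = (h_1, \dots, h_k)$ satisfies the hypotheses of Lemma \ref{Solie--Lemma--Padding for short multiwords}.

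Substituting these decompositions into the definition of $w_{\mathbf{g}}(\mathbf{r})$ gives
\begin{equation*}
  w_{\mathbf{g}}(\mathbf{r}) = u^{r_0 + a_1} h_1 u^{b_1 + r_1 + a_2} h_2 \cdots h_k u^{b_k + r_k} = w_{\mathbf{h}}(\mathbf{r}'),
\end{equation*}
where $r'_0 = r_0 + a_1$, $r'_i = b_i + r_i + a_{i+1}$ for $1 \leq i \leq k-1$, and $r'_k = b_k + r_k$. It therefore suffices to arrange $\min(\mathbf{r}') > N_0$, with $N_0$ the constant supplied by Lemma \ref{Solie--Lemma--Padding for short multiwords}.

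The main technical step is to control $|a_i|$ and $|b_i|$ linearly in $|g_i|_X$. Applying Lemma \ref{Solie--Cor--Passing to minimal double coset representatives} to each factor gives
\begin{equation*}
  \len([1, u^{a_i}]_\xup),\: \len([u^{a_i}h_i, u^{a_i}h_i u^{b_i}]_\xup) \leq 3|g_i|_\xup + 2F_0 \leq 3|g_i|_X + 2F_0.
\end{equation*}
Since $U$ is quasi-isometrically embedded with parameters $(\lambda_u, c_u)$ from Proposition \ref{Osin--Prop--U is qie, lambda_u, c_u}, this yields
\begin{equation*}
  |a_i|,\: |b_i| \leq \lambda_u(3|g_i|_X + 2F_0 + c_u) \leq \lambda_u(3R + 2F_0 + c_u) =: M(R),
\end{equation*}
where $M$ is linear in $R$.

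With these bounds in hand, setting $N_1(R) := N_0 + 2M(R)$ ensures that whenever $\min(\mathbf{r}) > N_1(R)$, each entry $r'_j$ of $\mathbf{r}'$ satisfies $|r'_j| \geq |r_j| - |a_{j+1}| - |b_j| > N_0$ (with the convention that $a_{k+1} = b_0 = 0$ at the endpoints). Lemma \ref{Solie--Lemma--Padding for short multiwords} then forces $w_{\mathbf{h}}(\mathbf{r}') = w_{\mathbf{g}}(\mathbf{r}) \neq 1$ in $G$. The function $N_1$ so defined is linear in $R$, completing the proof. The only place where care is required is the linear dependence on $R$: because the bound on $|a_i| + |b_i|$ must accommodate the worst individual factor rather than the full sum, using $|g_i|_X \leq R$ is essential, but this is immediate from $\sum_i |g_i|_X \leq R$ and the positivity of word lengths.
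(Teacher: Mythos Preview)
Your proposal is correct and follows essentially the same approach as the paper: decompose each $g_i$ as $u^{a_i} h_i u^{b_i}$ with $h_i$ a shortest double-coset representative, bound $|a_i|,|b_i|$ linearly in $R$ via the quasi-isometric embedding of $U$, absorb these into the padding exponents, and apply Lemma~\ref{Solie--Lemma--Padding for short multiwords}. The only cosmetic difference is that the paper cites Lemma~\ref{Solie--Lemma--U Lemma, F_0} directly to obtain the slightly sharper bound $|u^{a_i}|_\xup \leq |g_i|_X + F_0$, whereas you route through Lemma~\ref{Solie--Cor--Passing to minimal double coset representatives} and pick up the constant $3|g_i|_X + 2F_0$; both yield a linear $N_1$, so the argument is the same.
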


\begin{proof}
  Consider a single $g_i$.
  We may write $g_i = u^{s_i} h_i u^{t_i}$ with $h_i$ a $\xup$-shortest element of $Ug_iU$.
  By Lemma \ref{Solie--Lemma--U Lemma, F_0}, we have
  \begin{equation*}
    |u^{s_i}|_\xup, |u^{t_i}|_\xup \leq |g_i|_X + F_0 \leq R + F_0.
  \end{equation*}
  Using the constants $\lambda_u$ and $c_u$ from Proposition \ref{Osin--Prop--U is qie, lambda_u, c_u}, define
  \begin{equation*}
    N_1(R) := N_0 + 2\lambda_u(R+F_0+c_u),
  \end{equation*}
  where $N_0$ is the constant from Theorem \ref{Solie--Lemma--Padding for short multiwords}.
  Note that $\lambda_u(R+F_0+c_u) > |s_i|, |t_i|$ for all $i$.

  Let $\mathbf{r} = (r_0, r_1, \dots, r_k)$ be a tuple of integers with $\min(\mathbf{r}) > N_1(R)$.
  Then we have
  \begin{align}
    \label{Eqn--word with padding}
    w_{\mathbf{g}}(\mathbf{r}) &= u^{r_0} g_1 u^{r_1} g_2 u^{r_2} \cdots u^{r_{k-1}} g_k u^{r_k} \notag \\
      &= u^{r_0} (u^{s_1} h_1 u^{t_1}) u^{r_1} (u^{s_2} h_2 u^{t_2}) u^{r_2} \cdots u^{r_{k-1}} (u^{s_k} h_k u^{t_k}) u^{r_k}) \notag \\
      &= (u^{r_0 + s_1}) h_1 (u^{t_1+r_1+s_2}) h_2 (u^{t_2+r_2+s_3}) \cdots (u^{t_{k-1}+r_{k-1}+s_k}) h_k (u^{t_k+r_k})
  \end{align}
  where every exponent of $u$ appearing in \eqref{Eqn--word with padding} has magnitude at least $N_0$.
  By Theorem \ref{Solie--Lemma--Padding for short multiwords}, $w_{\mathbf{g}}(\mathbf{r})$ is nontrivial in $G$.
\end{proof}

\begin{lem}
  \label{Solie--Lemma--Padding for more general words}
  Let $(G, \mathbb{P})$ be a relatively hyperbolic group with finite generating set $X$, and let $U$ be a subgroup generated by a hyperbolic element $u \in G$.
  There is a linear function $N_2: \mathbb{N} \rightarrow \mathbb{N}$ such that the following holds.

  Let $\mathbf{g}=(g_1, g_2, \dots, g_k)$ be a tuple of $X$-words, and let $g_0, g_{k+1}$ be $X$-words such that $\ds{\sum_{i=0}^{k+1} |g_i|_X} \leq R$ and $g_i \in G-U$ for all $i$.
  Let $\mathbf{r} = (r_0, \dots, r_k)$ be a tuple of integers and define
  \begin{equation*}
    w_{\mathbf{g}}(\mathbf{r}) = u^{r_0} g_1 u^{r_1} g_2 u^{r_2} \cdots u^{r_{k-1}} g_k u^{r_k}.
  \end{equation*}
  Then for all $\mathbf{r}$ such that $\min(\mathbf{r}) > N_2(R)$, the elements
    \begin{align*}
      &w_{\mathbf{g}}(\mathbf{r}),\\
      g_0 &w_{\mathbf{g}}(\mathbf{r}),\\
      &w_{\mathbf{g}}(\mathbf{r}) g_{k+1}, \text{ and }\\
      g_0 &w_{\mathbf{g}}(\mathbf{r}) g_{k+1}
    \end{align*}
  are all nontrivial in $G$.
\end{lem}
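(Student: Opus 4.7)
The plan is to reduce the four statements to Lemma \ref{Solie--Lemma--Padding for multiwords} by first upgrading that lemma to a linear lower bound on $|w_{\mathbf{g}}(\mathbf{r})|_\xup$, and then observing that a short flanking product by $g_0, g_{k+1}$ cannot trivialize an element whose relative length exceeds $R$.

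First I would revisit the proof of Lemma \ref{Solie--Lemma--Padding for multiwords} to extract the following strengthened conclusion: there is a linear function $M: \mathbb{N} \to \mathbb{N}$, depending only on $(G, \mathbb{P})$, $X$, and $u$, such that whenever $\mathbf{g} = (g_1, \dots, g_k)$ satisfies the hypotheses of Lemma \ref{Solie--Lemma--Padding for multiwords} with respect to $R$ and $\min(\mathbf{r}) > M(R)$, we have $|w_{\mathbf{g}}(\mathbf{r})|_\xup > R$. This is essentially the commented-out remark following that lemma: the broken relative geodesic $\beta$ constructed there has the same endpoints as $\alpha$ (which is labeled by $w_{\mathbf{g}}(\mathbf{r})$) and is a $(\lambda', c')$-quasigeodesic. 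By Lemma \ref{Solie--Lemma--Length of beta_i}, each $\beta_i$ has length at least $(2/\lambda_u)\lfloor \min(\mathbf{r})/2 \rfloor - 2c_u - 2F_0$, so $\len(\beta)$, and therefore $|w_{\mathbf{g}}(\mathbf{r})|_\xup \geq (\len(\beta) - c')/\lambda'$, grows linearly in $\min(\mathbf{r})$.

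Next I would set $N_2(R) := \max\{N_1(R), M(R)\}$, which is still linear in $R$. For any tuple $\mathbf{r}$ with $\min(\mathbf{r}) > N_2(R)$, Lemma \ref{Solie--Lemma--Padding for multiwords} guarantees that $w_{\mathbf{g}}(\mathbf{r}) \neq 1$ in $G$, and the strengthened estimate gives $|w_{\mathbf{g}}(\mathbf{r})|_\xup > R$. Suppose for contradiction that one of the three remaining elements $g_0 w_{\mathbf{g}}(\mathbf{r})$, $w_{\mathbf{g}}(\mathbf{r}) g_{k+1}$, or $g_0 w_{\mathbf{g}}(\mathbf{r}) g_{k+1}$ is trivial in $G$. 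Solving for $w_{\mathbf{g}}(\mathbf{r})$ shows that it equals one of $g_0^{-1}$, $g_{k+1}^{-1}$, or $g_0^{-1} g_{k+1}^{-1}$. Each of these admits an $X$-labeled expression of length at most $|g_0|_X + |g_{k+1}|_X \leq R$, and since every $X$-labeled path in $\Cay(G,\xup)$ is in particular a $\xup$-path, we conclude $|w_{\mathbf{g}}(\mathbf{r})|_\xup \leq R$, contradicting the strengthened length bound.

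The main obstacle is just the bookkeeping for the linear estimate $M(R)$: one must trace the constants $\lambda_u, c_u, F_0, \lambda', c'$ through the quasigeodesic argument in the proof of Lemma \ref{Solie--Lemma--Padding for multiwords} and combine them with Lemma \ref{Solie--Lemma--Length of beta_i}, but no additional geometric input is needed. Everything else is a formal consequence of the existing padding machinery together with the elementary observation that $|g|_\xup \leq |g|_X$.
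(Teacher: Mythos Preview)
Your proposal is correct and follows essentially the same approach as the paper: both arguments extract from the quasigeodesic $\beta$ in the proof of Lemma~\ref{Solie--Lemma--Padding for short multiwords} a linear-in-$R$ threshold beyond which $|w_{\mathbf{g}}(\mathbf{r})|_\xup$ exceeds $R$ (the paper uses $2R$, which is slightly more than needed), and then observe that flanking by $g_0$, $g_{k+1}$ with $|g_0|_X + |g_{k+1}|_X \leq R$ cannot kill such an element. The paper defines $N_2(R)$ as $N_1(R)$ plus an explicit linear term rather than your $\max\{N_1(R), M(R)\}$, but this is cosmetic.
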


\begin{proof}
  Note that if
  \begin{equation*}
    \min(\mathbf{r}) > 2\lambda_u\left(2\lambda' R+c_u+F_0+c'\right)+2,
  \end{equation*}
  then
  \begin{equation*}
    \lfloor \min(\mathbf{r})/2 \rfloor > \lambda_u \left(2\lambda'R+c_u+F_0+c'\right)
  \end{equation*}
  and therefore $|w_{\mathbf{g}}(\mathbf{r})|_\xup > 2R$.
  Define
  \begin{equation*}
    N_2(R) := N_1(R) + 2\lambda_u\left(2\lambda' R+c_u+F_0+c'\right)+2,
  \end{equation*}
  and note that since $N_1$ is linear in $R$, so is $N_2$.

  Then for all $\mathbf{r}$ with $\min(\mathbf{r}) > N_2(R)$, we have $|w_{\mathbf{g}}(\mathbf{r})|_\xup > 2R \geq |g_0|_\xup + |g_{k+1}|_\xup$, and so none of the promised words are trivial in $G$ by Lemma \ref{Solie--Lemma--Padding for multiwords}.
\end{proof} 

\subsection{Discriminating complexity}

Let $H$ be a finitely generated group, and let $G$ be a fully residually $H$ group.
Let $X$ and $Y$ be fixed finite generating sets for $G$ and $H$, respectively.

\begin{defn}[Complexity] \index{complexity of a homomorphism}
  \label{Defn--Complexity}
  Let $\phi: G \rightarrow H$.
  The \emph{complexity} of $\phi$ with respect to the finite generating sets $X$ and $Y$ is
  \begin{displaymath}
    |\phi|_X^Y := \displaystyle{\max_{x \in X} |\phi(x)|_Y}.
  \end{displaymath}
\end{defn}

The following lemma is straightforward to verify.

\begin{lem}
  \label{Solie--Lemma--Change of generating set}
  Let $\phi: G \rightarrow H$ and $\theta: H \rightarrow K$ and let $X$, $Y$, and $Z$ be finite generating sets for $G, H,$ and $K$, respectively.
  Then
  \begin{equation*}
    |\theta \circ \phi|_X^Z \leq |\phi|_X^Y \cdot |\theta|_Y^Z.
  \end{equation*}
\end{lem}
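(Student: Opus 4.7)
The plan is to unpack the definition of complexity, then use the fact that word length is subadditive under the group multiplication to push the estimate through a composition. Since there is really no obstacle here — this is a simple bookkeeping argument — the proof proposal is brief.

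First I would fix an arbitrary generator $x \in X$ and write $\phi(x)$ as a shortest $Y$-word, say $\phi(x) = y_1 y_2 \cdots y_n$ with each $y_i \in Y \cup Y^{-1}$ and $n = |\phi(x)|_Y \leq |\phi|_X^Y$. Applying $\theta$ and using that it is a homomorphism gives
\begin{equation*}
(\theta \circ \phi)(x) = \theta(y_1)\,\theta(y_2)\cdots \theta(y_n).
\end{equation*}
Each factor $\theta(y_i)$ is an element of $K$ whose $Z$-length is bounded by $|\theta|_Y^Z$; this uses the small observation that $|\theta(y^{-1})|_Z = |\theta(y)^{-1}|_Z = |\theta(y)|_Z$, so passing to $Y^{-1}$ costs nothing.

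By the triangle inequality for $Z$-length (that is, subadditivity of $|\cdot|_Z$ under products in $K$),
\begin{equation*}
|(\theta \circ \phi)(x)|_Z \;\leq\; \sum_{i=1}^{n} |\theta(y_i)|_Z \;\leq\; n \cdot |\theta|_Y^Z \;\leq\; |\phi|_X^Y \cdot |\theta|_Y^Z.
\end{equation*}
Taking the maximum over $x \in X$ on the left then yields $|\theta \circ \phi|_X^Z \leq |\phi|_X^Y \cdot |\theta|_Y^Z$, which is the claimed inequality. The only substantive ingredient is that complexity measures the $Y$-length of images of generators, and that the images of those generators under $\theta$ can themselves be expanded as $Z$-words bounded by $|\theta|_Y^Z$; there is no subtlety beyond this.
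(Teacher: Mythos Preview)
Your argument is correct and is precisely the straightforward verification the paper has in mind; the paper does not even write out a proof, simply remarking that the lemma is ``straightforward to verify.'' Your expansion of $\phi(x)$ as a geodesic $Y$-word, application of $\theta$ term by term, and appeal to subadditivity of $|\cdot|_Z$ is exactly the intended computation, and your observation that inverses cost nothing handles the only minor wrinkle.
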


\begin{remark}
  Using the above convention, if $X'$ and $Y'$ are alternate finite generating sets for $G$ and $H$, respectively, we have
  \begin{equation*}
    |\phi|_{X'}^{Y'} \leq |\id|_{X'}^X \cdot |\phi|_X^Y \cdot |\id|_Y^{Y'}.
  \end{equation*}
\end{remark}

Since $G$ is fully residually $H$, for every $R \in \mathbb{N}$, there is a homomorphism $\phi_R$ which $H$-discriminates the finite set $B_R(G,X)-1$.

\begin{defn}[Discriminating complexity] \index{discriminating complexity}
  \label{Defn--Discriminating complexity}
  Define a function $C_{G,X}^{H,Y}: \mathbb{N} \rightarrow \mathbb{N}$ via
  \begin{displaymath}
    C_{G,X}^{H,Y}(R) := \min \{|\phi|_X^Y \;:\; (\phi: G \rightarrow H) \text{ discriminates } (B_R(G,X)-1) \}.
  \end{displaymath}
  The function $C_{G,X}^{H,Y}$ so defined is called the \emph{$H$-discriminating complexity of $G$ with respect to finite generating sets $X$ and $Y$.}
\end{defn}

We will be interested in asymptotic classes of the discriminating complexity for a given group.
To this end, if $f,g : \mathbb{N} \rightarrow \mathbb{N}$, we say that $f$ is \emph{asymptotically dominated by} $g$, denoted $f \preceq g$, if there is a constant $K$ such that for all $n$,
\begin{equation*}
  f(R) \leq Kg(KR)+K.
\end{equation*}
We say that $f$ is \emph{asymptotically equivalent} to $g$, denoted $f \approx g$, if $f \preceq g$ and $g \preceq f$.

Lemma \ref{Solie--Lemma--Change of generating set} and the remark following it imply the following proposition.

\begin{prop}
  Let $G$ be a fully residually $H$ group.
  Let $X, X'$ be finite generating sets for $G$, and let $Y, Y'$ be finite generating sets for $H$.
  Then we have
  \begin{displaymath}
    C_{G,X}^{H,Y} \preceq C_{G,X'}^{H,Y'}.
  \end{displaymath}
\end{prop}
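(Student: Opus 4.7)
The plan is to show that a single discriminating homomorphism works in both regimes and to control how its complexity transforms when we change generating sets. All the machinery needed is already in the excerpt: Lemma \ref{Solie--Lemma--Change of generating set} and the remark following it tell us how $|\phi|_X^Y$ compares to $|\phi|_{X'}^{Y'}$, and the definition of $\preceq$ requires only that we find constants relating the two complexity functions by a bounded linear rescaling of both input and output.

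Set $K_1 := |\id|_X^{X'}$ and $L_2 := |\id|_{Y'}^Y$; these are finite because $X, X', Y, Y'$ are finite generating sets. First I would verify the elementary ball inclusion
\begin{equation*}
  B_R(G, X) \subseteq B_{K_1 R}(G, X'),
\end{equation*}
which follows by writing each $x \in X$ as a word of length at most $K_1$ in $X'^{\pm 1}$ and concatenating at most $R$ such words. Fix $R \in \mathbb{N}$ and choose a homomorphism $\phi: G \rightarrow H$ that realizes $C_{G,X'}^{H,Y'}(K_1 R)$, so that $\phi$ discriminates $B_{K_1 R}(G, X') - 1$ and $|\phi|_{X'}^{Y'} = C_{G,X'}^{H,Y'}(K_1 R)$. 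The inclusion above implies that $\phi$ also discriminates the smaller set $B_R(G, X) - 1$.

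Next I would apply the remark following Lemma \ref{Solie--Lemma--Change of generating set} to bound the complexity of this same $\phi$ in the original generating sets:
\begin{equation*}
  |\phi|_X^Y \leq |\id|_X^{X'} \cdot |\phi|_{X'}^{Y'} \cdot |\id|_{Y'}^Y = K_1 L_2 \cdot C_{G,X'}^{H,Y'}(K_1 R).
\end{equation*}
Taking the minimum over discriminating homomorphisms on the left yields
\begin{equation*}
  C_{G,X}^{H,Y}(R) \leq K_1 L_2 \cdot C_{G,X'}^{H,Y'}(K_1 R).
\end{equation*}
Setting $K := K_1 L_2 + K_1$, this becomes $C_{G,X}^{H,Y}(R) \leq K \cdot C_{G,X'}^{H,Y'}(K R) + K$, which is exactly the definition of $C_{G,X}^{H,Y} \preceq C_{G,X'}^{H,Y'}$.

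There is no real obstacle in this argument; it is a bookkeeping exercise with the constants from Lemma \ref{Solie--Lemma--Change of generating set}. The only point that deserves a moment of thought is that a homomorphism discriminating a larger ball automatically discriminates every smaller one, so we are free to use $C_{G,X'}^{H,Y'}(K_1 R)$ to bound $C_{G,X}^{H,Y}(R)$ rather than needing the two arguments to match. Swapping the roles of $(X,Y)$ and $(X',Y')$ then gives the reverse domination for free, so the discriminating complexity is in fact an invariant of $G$ (relative to $H$) up to asymptotic equivalence.
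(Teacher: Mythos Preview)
Your proof is correct and is exactly the bookkeeping the paper has in mind: the paper does not spell out a proof but simply states that the proposition follows from Lemma~\ref{Solie--Lemma--Change of generating set} and the subsequent remark, and your argument is the natural unpacking of that implication. The only step you leave implicit is that $C_{G,X'}^{H,Y'}$ is nondecreasing (needed to pass from argument $K_1 R$ to $KR$), but you effectively note this in your final paragraph when observing that a homomorphism discriminating a larger ball discriminates every smaller one.
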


As a result of the above proposition, the asymptotic class of the $H$-discriminating complexity of $G$ is invariant with respect to choice of finite generating set for both $G$ and $H$.
Therefore, we will omit reference to these generating sets and simply indicate (the asymptotic class of) the $H$-discriminating complexity of $G$ by $C_G^H$.

In order to study $H$-discriminating complexity, we will find it useful to establish some notation for sequences of homomorphisms which discriminate larger and larger balls in a given group.

\begin{defn}[Discriminating sequence] \index{discriminating sequence}
  \label{Defn--Discriminating sequence}
  Let $\Phi = (\phi_R: G \rightarrow H)_{R \in \mathbb{N}}$ be a sequence of homomorphisms.
  If for each $R \in \mathbb{N}$, the set $B_R(G,X)-1$ is $H$-discriminated by $\phi_R$, we say that $\Phi$ is a \emph{$H$-discriminating sequence} with respect to the finite generating set $X$.
\end{defn}

  It is straightforward to see that a finitely generated group $G$ is fully residually $H$ if and only if $G$ admits an $H$-discriminating sequence with respect to some (every) finite generating set.

  We also make the following observation. Let $X$ and $X'$ be finite generating sets for $G$ and let $\Phi$ be an $H$-discriminating sequence for $G$ with respect to $X$.
  By passing to an arithmetic subsequence of $\Phi$, we may obtain an $H$-discriminating sequence with respect to $X'$, and the complexity of this subsequence is equivalent to that of $\Phi$.

\begin{defn}[Complexity function] \index{complexity function}
  \label{Defn--Complexity function}
  Given an $H$-discriminating sequence $\Phi$, we construct the \emph{$H$-discriminating complexity function associated to $\Phi$,} the function $C_\Phi: \mathbb{N} \rightarrow \mathbb{N}$ defined via:
  \begin{displaymath}
    C_\Phi(R) := |\phi_R|_X^Y.
  \end{displaymath}
\end{defn}

We briefly note that complexity functions of discriminating sequences provide an obvious upper bound for discriminating complexity.

\begin{prop}
  Let $G$ and $H$ be finitely generated groups and let $G$ be fully residually $H$.
  Let $\Phi = (\phi_R)_{R \in \mathbb{N}}$ be an $H$-discriminating sequence for $G$.
  Then $C_G^H \preceq C_\Phi$.
\end{prop}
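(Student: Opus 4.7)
The plan is essentially to unwind the definitions: the content of the statement is that any particular $H$-discriminating sequence provides an upper bound on the minimum-complexity discriminator, as captured by the inequality defining $\preceq$.

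First, I would fix the finite generating sets $X$ for $G$ and $Y$ for $H$ that are used implicitly in the definitions of $\Phi$ and of $C_\Phi$. For each $R \in \mathbb{N}$, the very definition of an $H$-discriminating sequence (Definition \ref{Defn--Discriminating sequence}) guarantees that $\phi_R$ discriminates the set $B_R(G,X) - 1$. Consequently, $\phi_R$ lies in the (nonempty) set
\[
\{\phi : G \to H \text{ that discriminates } B_R(G,X) - 1\}
\]
over which the minimum in the definition of $C_{G,X}^{H,Y}(R)$ is taken. This immediately yields the pointwise inequality
\[
C_{G,X}^{H,Y}(R) \;\leq\; |\phi_R|_X^Y \;=\; C_\Phi(R)
\]
for every $R \in \mathbb{N}$, which in particular gives $C_{G,X}^{H,Y} \preceq C_\Phi$ with asymptotic constant $K = 1$.

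Finally, to produce the statement as written, I would invoke the preceding proposition asserting that the asymptotic class of the $H$-discriminating complexity of $G$ does not depend on the choice of finite generating sets for $G$ and $H$. Since the left-hand side $C_G^H$ is that asymptotic class, and since we have shown $C_{G,X}^{H,Y} \preceq C_\Phi$ for the particular generating sets associated with $\Phi$, we conclude $C_G^H \preceq C_\Phi$ as desired.

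There is no genuine obstacle here; the entire argument is a one-line comparison with the defining minimum, followed by the previous invariance-of-generating-set proposition to pass to the asymptotic class $C_G^H$. The only subtlety worth flagging to the reader is the need to use matching generating sets on both sides before invoking the asymptotic invariance, so that the pointwise inequality is literally valid before taking the $\preceq$-closure.
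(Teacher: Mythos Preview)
Your proposal is correct and matches the paper's intent exactly: the paper states this proposition without proof, merely noting beforehand that ``complexity functions of discriminating sequences provide an obvious upper bound for discriminating complexity,'' and your argument is precisely the one-line comparison with the defining minimum that this remark gestures at.
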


\subsubsection{Free Abelian groups}

We begin by investigating the $\mathbb{Z}$-discriminating complexity of a free Abelian group $\mathbb{Z}^n$.

\begin{prop}
  \label{Solie--Prop--Upper Bound on Discriminating for Free Abelian Groups}
  The $\mathbb{Z}$-discriminating complexity of $\mathbb{Z}^n$ is asymptotically dominated by a polynomial of degree $n-1$.
\end{prop}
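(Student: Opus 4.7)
The plan is to exhibit an explicit $\mathbb{Z}$-discriminating sequence $(\phi_R)_{R \in \mathbb{N}}$ for $\mathbb{Z}^n$ whose complexity is $O(R^{n-1})$, after which Proposition \ref{Solie--Lemma--Change of generating set}'s preceding corollary gives the asymptotic bound on $C_{\mathbb{Z}^n}^{\mathbb{Z}}$. Fix the standard basis $X = \{e_1,\dots,e_n\}$ for $\mathbb{Z}^n$ and the generator $Y=\{1\}$ for $\mathbb{Z}$. Note that $|v|_X = \sum_i |v_i|$ for $v = \sum v_i e_i$, so every element of $B_R(\mathbb{Z}^n,X)$ has coordinates satisfying $|v_i| \le R$ for each $i$.

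For each $R \geq 1$ set $M_R := R+1$ and define $\phi_R : \mathbb{Z}^n \to \mathbb{Z}$ by the ``place-value'' rule $\phi_R(e_i) := M_R^{i-1}$, so that
\[
  \phi_R(v) \;=\; \sum_{i=1}^n v_i M_R^{i-1}.
\]
The complexity is immediate: $|\phi_R|_X^Y = \max_i M_R^{i-1} = (R+1)^{n-1}$, which is a polynomial of degree $n-1$ in $R$. To verify that $\phi_R$ discriminates $B_R - 1$, pick any nonzero $v \in B_R$ and let $k$ be the maximal index with $v_k \neq 0$. Since $|v_i| \leq R = M_R - 1$ for $i < k$, the geometric series estimate gives
\[
  \left| \sum_{i=1}^{k-1} v_i M_R^{i-1} \right|
  \;\leq\; (M_R - 1)\cdot \frac{M_R^{k-1}-1}{M_R - 1}
  \;=\; M_R^{k-1} - 1,
\]
while $|v_k M_R^{k-1}| \geq M_R^{k-1}$. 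Hence $|\phi_R(v)| \geq 1$, so $\phi_R(v) \neq 0$.

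This shows $(\phi_R)_{R \in \mathbb{N}}$ is a $\mathbb{Z}$-discriminating sequence for $\mathbb{Z}^n$, and its complexity function is $(R+1)^{n-1}$. Applying the proposition that $C_G^H \preceq C_\Phi$ for any discriminating sequence $\Phi$, we conclude $C_{\mathbb{Z}^n}^{\mathbb{Z}}$ is asymptotically dominated by $R^{n-1}$. There is no real obstacle here; the only subtlety is that the choice $M_R = R+1$ is made precisely so that $R \cdot \tfrac{M_R^{k-1} - 1}{M_R-1}$ telescopes against the leading term $M_R^{k-1}$, and any significantly smaller choice of $M_R$ would allow cancellation for some adversarial $v \in B_R$.
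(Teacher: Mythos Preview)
Your proof is correct and follows essentially the same approach as the paper: both construct the place-value homomorphism $v \mapsto \sum_i v_i M^{i-1}$ for a base $M$ slightly larger than $R$, yielding complexity $M^{n-1} \approx R^{n-1}$. The only differences are cosmetic: the paper takes $M = 2R+1$ and proves by induction that the map is a bijection from the box $[-R,R]^n$ onto a contiguous interval (a slightly stronger conclusion used later), whereas you take $M = R+1$ and verify injectivity on $B_R$ directly via the geometric series estimate.
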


We will consider the elements of $\mathbb{Z}^n$ to be $n$-tuples of integers.
For $R \in \mathbb{N}$, define $[-R,R]^n := \{ (t_1, \dots, t_n) \in \mathbb{Z}^n : |t_i| \leq R,\; 1 \leq i \leq n \}$.
Instead of discriminating closed balls in $\mathbb{Z}^n$ with respect to the usual metric, we will construct homomorphisms which are injective on the sets $[-R,R]^n$ for each $R \in \mathbb{N}$.

\begin{lem}
  \label{Solie--Lemma--Discriminating Free Abelian Map}
  For $n,R \in \mathbb{N}$, define the homomorphism $\theta_{n,R}: \mathbb{Z}^n \rightarrow \mathbb{Z}$ by
  \begin{align*}
    \theta_{n,R}(t_1, \dots, t_n) = \displaystyle{\sum_{i=1}^n (2R+1)^{i-1} t_i}.
  \end{align*}

  Then $\theta_{n,R}$ induces a bijection from $[-R,R]^n$ to the interval
  \begin{displaymath}
    I_{n,R}:=\left[-\frac{1}{2}\left((2R+1)^n-1\right), \frac{1}{2}\left((2R+1)^n-1\right)\right].
  \end{displaymath}
\end{lem}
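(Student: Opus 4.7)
The plan is to recognize $\theta_{n,R}$ as a ``balanced base-$(2R+1)$'' digit evaluation: the tuple $(t_1,\dots,t_n)$ is essentially the digit sequence of an integer written in base $2R+1$, with digits drawn from the symmetric range $\{-R,\dots,R\}$ instead of the usual $\{0,\dots,2R\}$. Once this is recognized, the result reduces to two standard claims: the image lies in $I_{n,R}$, and $\theta_{n,R}$ is injective on $[-R,R]^n$.

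First I would bound the image. For any $(t_1,\dots,t_n) \in [-R,R]^n$, the triangle inequality together with the geometric series gives
\[
  |\theta_{n,R}(t_1,\dots,t_n)| \leq R \sum_{i=1}^n (2R+1)^{i-1} = R \cdot \frac{(2R+1)^n - 1}{2R} = \frac{(2R+1)^n - 1}{2},
\]
so $\theta_{n,R}([-R,R]^n) \subseteq I_{n,R}$.

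Next I would prove injectivity by induction on $n$, exploiting the base-$(2R+1)$ structure. Suppose $\theta_{n,R}(\mathbf{s}) = \theta_{n,R}(\mathbf{t})$ for $\mathbf{s},\mathbf{t} \in [-R,R]^n$. Set $d_i := s_i - t_i$, so each $|d_i| \leq 2R$ and $\sum_{i=1}^n (2R+1)^{i-1} d_i = 0$. Reducing mod $2R+1$ gives $d_1 \equiv 0 \pmod{2R+1}$; since $|d_1| \leq 2R < 2R+1$, this forces $d_1 = 0$. Dividing the remaining equation by $2R+1$ reduces to the $(n-1)$-variable case, yielding $d_2 = \cdots = d_n = 0$ by induction.

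Finally I would conclude by a counting argument: $[-R,R]^n$ has exactly $(2R+1)^n$ elements and $I_{n,R} \cap \mathbb{Z}$ also contains exactly $(2R+1)^n$ integers. Since $\theta_{n,R}$ is an injection of the former into the latter, it is automatically a bijection between them. I do not expect any real obstacle here; the main conceptual point is simply to observe the balanced-base representation, after which both inclusion and injectivity are immediate.
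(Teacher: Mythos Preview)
Your proof is correct and follows essentially the same strategy as the paper's: bound the image, prove injectivity by an inductive digit-extraction argument, and conclude by counting. The only differences are tactical---you compute the image bound in one shot via the geometric series (the paper does it inductively), and you peel off the low-order digit $d_1$ via reduction mod $2R+1$, whereas the paper peels off the high-order coordinate $t_{n+1}$ via a magnitude comparison; both are standard ways to run the balanced base-$(2R+1)$ argument.
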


\begin{proof}
  We proceed by induction.
  Since $\theta_{1,R}$ is the identity for all $R$, we have the promised bijection for $n=1$.

  Fix $r$ and assume that $\theta_{n,R}$ induces a bijection from $[-R,R]^n$ to $I_{n,R}$.
  Note that that
  \begin{displaymath}
    \theta_{n+1,R}(t_1, \dots, t_{n+1}) = \theta_{n,R}(t_1, \dots, t_n)+(2R+1)^n t_{n+1}.
  \end{displaymath}

  By the inductive hypothesis, we have
  \begin{align*}
    \big|\theta_{n+1,R}(t_1, \dots, t_{n+1})\big| &\leq \big|\theta_{n,R}(t_1, \dots, t_n)\big| + (2R+1)^n \big|t_{n+1}\big| \\
    & \leq \frac{1}{2}\big((2R+1)^n-1\big) + R(2R+1)^n \\
    & = \frac{1}{2}(2R+1)^n + \frac{1}{2}2R(2R+1)^n - \frac{1}{2} \\
    & = \frac{1}{2}\big((2R+1)^{n+1} - 1\big).
  \end{align*}

  Therefore $\theta_{n+1,R}$ maps $[-R,R]^{n+1}$ into the interval $I_{n+1,R}$.

  Suppose that there are $(s_1, \dots, s_n), (t_1, \dots, t_n) \in [-R,R]^{n+1}$ such that $\theta_{n+1,R}(t) = \theta_{n+1,R}(s)$.
  We then have
  \begin{displaymath}
    \theta_{n,R}(t_1, \dots, t_n)+(2R+1)^n t_{n+1} = \theta_{n,R}(s_1, \dots, s_n)+(2R+1)^n s_{n+1}.
  \end{displaymath}
  We must have $t_{n+1} \neq s_{n+1}$ or we contradict the injectivity of $\theta_{n,r}$.
  However, by using the inductive hypothesis, we have
  \begin{align*}
    (2R+1)^n-1 &\geq \big|\theta_{n,R}(t_1, \dots, t_n) - \theta_{n,R}(s_1, \dots, s_n)\big| \\
      &= \big|(2R+1)^n(s_{n+1}-t_{n+1})\big| \\
      &\geq (2R+1)^n,
  \end{align*}
  a contradiction.

  We have shown that $\theta_{n+1,R}$ maps $[-R,R]^{n+1}$ injectively to $I_{n+1,R}$.
  Since both sets have the same cardinality, $\theta_{n+1,R}$ is a bijection between $[-R,R]^{n+1}$ and $I_{n+1,R}$.
\end{proof}

Proposition \ref{Solie--Prop--Upper Bound on Discriminating for Free Abelian Groups} follows immediately from Lemma \ref{Solie--Lemma--Discriminating Free Abelian Map} since each homomorphism $\theta_{n,R}$ is injective on $B_R$ and therefore discriminates $B_R-1$.
Furthermore, the complexity of $\theta_{n,R}$ is $(2R+1)^{n-1}$, as promised.

The following result is well-known from number theory and will help us to establish a lower bound on the $\mathbb{Z}$-discriminating complexity of $\mathbb{Z}^n$.

\begin{siegelslemma}[\cite{Bombieri1984,Siegel1929}]
  \label{Siegel--Prop--Siegel's Lemma}
  Let $A$ be an $M \times N$ integer matrix with $M > N$ and $A \neq 0$.
  Let $B$ be a constant such that for every entry $a_{ij}$ of $A$, we have $|a_{ij}| \leq B$.
  Then there exists a nonzero $N \times 1$ integer matrix $X$ with entries $x_i$ such that $AX = 0$ and for each $i$,
  \begin{equation*}
    |x_i| \leq (NB)^{M/(N-M)}.
  \end{equation*}
\end{siegelslemma}


\begin{cor}
  \label{Solie--Corollary--Lower Bound on Discriminating for Free Abelian Groups}
  The $\mathbb{Z}$-discriminating complexity of $\mathbb{Z}^n$ asymptotically dominates a polynomial of degree $n-1$.
\end{cor}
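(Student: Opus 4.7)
The plan is to exploit Siegel's Lemma to produce a short nonzero element of the kernel of any candidate discriminating homomorphism, and then observe that this element must escape the ball $B_R$, forcing the complexity to grow polynomially of degree $n-1$ in $R$. Fix the standard generating set $X=\{e_1,\dots,e_n\}$ for $\mathbb{Z}^n$ and $Y=\{1\}$ for $\mathbb{Z}$. Any homomorphism $\phi:\mathbb{Z}^n\to\mathbb{Z}$ is then encoded by the row vector $A_\phi=(\phi(e_1),\dots,\phi(e_n))\in\mathbb{Z}^n$; its complexity is $|\phi|_X^Y=\max_i|\phi(e_i)|$, and the $X$-length on $\mathbb{Z}^n$ is the $\ell^1$-norm, so $B_R(\mathbb{Z}^n,X)=\{(t_1,\dots,t_n):\sum_i|t_i|\leq R\}$. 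Suppose $\phi$ discriminates $B_R-1$, i.e.\ $\ker\phi\cap B_R=\{0\}$, and set $B:=|\phi|_X^Y$.

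Next I would apply Siegel's Lemma to $A_\phi$, viewed as a $1\times n$ integer matrix with entries of magnitude at most $B$. Provided $n\geq 2$, the lemma yields a nonzero integer vector $X=(x_1,\dots,x_n)^T$ with $A_\phi X=0$ and $\max_i|x_i|\leq (nB)^{1/(n-1)}$. Since $X$ is nonzero and lies in $\ker\phi$, the discrimination hypothesis forces $X\notin B_R$, whence
\begin{equation*}
R < \sum_{i=1}^n|x_i| \leq n\max_i|x_i| \leq n\,(nB)^{1/(n-1)}.
\end{equation*}
Rearranging gives $B> R^{n-1}/n^n$, so every homomorphism discriminating $B_R-1$ has complexity at least $R^{n-1}/n^n$, which is precisely the desired lower bound.

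This shows $C_{\mathbb{Z}^n}^{\mathbb{Z}}(R)\geq R^{n-1}/n^n$ for all $R$, so $R^{n-1}\preceq C_{\mathbb{Z}^n}^{\mathbb{Z}}(R)$, completing the proof modulo the case $n=1$, which is trivial because the identity map $\mathbb{Z}\to\mathbb{Z}$ discriminates every $B_R-\{0\}$ with complexity $1$, matching a polynomial of degree $0$. There is no serious obstacle: the invariance of the asymptotic class of discriminating complexity under change of finite generating set (established earlier in the section) means working with the standard generators is without loss of generality, and the remainder of the argument is a one-line application of Siegel's Lemma. The only mildly delicate point is matching the constants carefully so that the inequality $B>R^{n-1}/n^n$ translates into the relation $R^{n-1}\preceq C_{\mathbb{Z}^n}^{\mathbb{Z}}(R)$ in the sense of the paper's asymptotic preorder, which is immediate since $C_{\mathbb{Z}^n}^{\mathbb{Z}}$ is non-decreasing and $n$ is fixed.
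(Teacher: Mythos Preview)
Your proof is correct and follows essentially the same approach as the paper: apply Siegel's Lemma to the $1\times n$ matrix representing a discriminating homomorphism to produce a short nonzero kernel element, then observe it must lie outside $B_R$, forcing $|\phi|\succeq R^{n-1}$. The only cosmetic difference is that the paper first passes to the cube $[-\lfloor R/n\rfloor,\lfloor R/n\rfloor]^n\subseteq B_R$ and compares the Siegel bound to $\lfloor R/n\rfloor$, whereas you bound the $\ell^1$-norm of the Siegel vector directly by $n\max_i|x_i|$; the resulting inequalities and constants are essentially identical.
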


\begin{proof}
  Let $\Phi = (\phi_R)_{R \in \mathbb{N}}$ be a $\mathbb{Z}$-discriminating sequence for $\mathbb{Z}^n$.
  By definition, $\phi_R$ discriminates the set $B_R-1$, the closed ball of radius $R$ with respect to (WLOG) the standard basis of $\mathbb{Z}^n$.

  Each $\phi_R$ can be represented by an $n \times 1$ integer matrix whose entries are bounded above in magnitude by $C_\Phi(R)$.
  By Siegel's lemma, there exists for each $\phi_R$ an element of the kernel of $\phi_R$ whose entries are bounded above in magnitude by $(nC_\Phi(R))^{1/(n-1)}$.
  Since $\phi_R$ discriminates $B_R-1$, it also discriminates the set of nontrivial elements whose entries are bounded above in magnitude by $\lfloor R/n \rfloor$.
  We must then have
  \begin{align*}
    \dfrac{R}{n} - 1 \leq \displaystyle{\left\lfloor \dfrac{R}{n} \right\rfloor} &\leq (nC_\Phi(R))^{{1}/{(n-1)}}\\
    \dfrac{(R-n)^{n-1}}{n^{n-1}} &\leq nC_\Phi(R)\\
    \dfrac{(R-n)^{n-1}}{n^n} & \leq C_\Phi(R).
  \end{align*}
  Therefore $C_\Phi(R) \succeq R^{n-1}$.

  In particular, taking $\Phi$ such that $C_\Phi(R) = C_G^\Gamma(R)$, we have that $C_G^\Gamma(R) \succeq R^{n-1}$.
\end{proof}

\begin{thm}
  \label{Solie--Theorem--Free Abelian rank n has complexity n-1}
  The $\mathbb{Z}$-discriminating complexity of $\mathbb{Z}^n$ is asymptotically equivalent to a polynomial of rank $n-1$.
\end{thm}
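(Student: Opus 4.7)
The plan is to deduce this theorem directly by combining the two preceding results. Recall that asymptotic equivalence $f \approx g$ means $f \preceq g$ and $g \preceq f$, so the statement will follow once both directions are verified, and each direction is already in hand.

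For the $\preceq$ direction I will invoke Proposition \ref{Solie--Prop--Upper Bound on Discriminating for Free Abelian Groups}. That proposition exhibits an explicit family of discriminating homomorphisms $\theta_{n,R}: \mathbb{Z}^n \rightarrow \mathbb{Z}$, defined by a base-$(2R+1)$ digit expansion, whose complexity is $(2R+1)^{n-1}$, a polynomial of degree $n-1$ in $R$. Since the $\mathbb{Z}$-discriminating complexity is the pointwise minimum over all discriminating homomorphisms, this explicit family provides an upper bound of the required form.

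For the $\succeq$ direction I will invoke Corollary \ref{Solie--Corollary--Lower Bound on Discriminating for Free Abelian Groups}. There, Siegel's Lemma is applied to the $n \times 1$ integer matrix representing an arbitrary $\mathbb{Z}$-discriminating homomorphism $\phi_R$: it produces a nonzero kernel element whose sup-norm is bounded above by $(nC_\Phi(R))^{1/(n-1)}$. Because $\phi_R$ must be injective on $B_R-1$ (and in particular on the box of sup-norm $\lfloor R/n \rfloor$ in $\mathbb{Z}^n$), this forced kernel element must have sup-norm exceeding $\lfloor R/n \rfloor$, yielding the polynomial lower bound $C_\Phi(R) \succeq R^{n-1}$. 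Specializing to a $\Phi$ realizing the discriminating complexity gives the desired bound for $C_{\mathbb{Z}^n}^{\mathbb{Z}}$.

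Combining the two bounds gives $C_{\mathbb{Z}^n}^{\mathbb{Z}} \approx R^{n-1}$, which is exactly the claim. There is essentially no further obstacle; the theorem is the conjunction of the proposition and the corollary, and the only ``work'' is to remark that the two polynomial bounds have the same degree so that asymptotic equivalence indeed holds.
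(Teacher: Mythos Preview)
Your proposal is correct and matches the paper's approach exactly: the theorem is stated in the paper without a separate proof, since it is simply the conjunction of Proposition~\ref{Solie--Prop--Upper Bound on Discriminating for Free Abelian Groups} and Corollary~\ref{Solie--Corollary--Lower Bound on Discriminating for Free Abelian Groups}. One small wording quibble: in the lower-bound step you say ``$\phi_R$ must be injective on $B_R-1$,'' but what is actually used (and what ``discriminating'' means) is only that no nontrivial element of $B_R$ lies in $\ker\phi_R$; this is exactly what forces the Siegel kernel element outside the box, so your argument is unaffected.
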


For $p \in \mathbb{Z}$, define a homomorphism $\theta_{n,R}^{p}: \mathbb{Z}^n \rightarrow \mathbb{Z}$ by
\begin{equation*}
  \theta_{n,R}^{p}(t_1, \dots, t_n) := p \theta_{n,R}(t_1, \dots, t_n).
\end{equation*}
Note that since $\theta_{n,R}$ discriminates the set $[-R,R]^n-1$, if $i \in \theta_{n,R}^{p}([-R,R]^n-1)$, then $|i| > |p|$.
Clearly $\theta_{n,R}^{p}$ then also discriminates $[-R,R]^n-1$.

\subsubsection{Extensions of centralizers}

Let $\Gamma$ be a non-Abelian, torsion-free hyperbolic group.
Let $G$ be an iterated extension of centralizers over $\Gamma$ with finite generating set $X$, and let $u \in G$ be a hyperbolic element which generates its own centralizer.
Let $G'$ be a rank $n$ extension of the centralizer $C(u) = C_G(u)$.
Fix elements $T = \{t_1, \dots, t_n\} \subset G'$ be such that $\{u, t_1, \dots, t_n\}$ is a basis for the free Abelian group $C_{G'}(u)$.

We define a homomorphism $\Theta_{n,R}^{p} : G' \rightarrow G$ via:
\begin{align*}
  \Theta_{n,R}^{p}(g) &:= g \text{ for all } g \in G \\
  \Theta_{n,R}^{p}(t_i) &:= u^{p(2R+1)^{i-1}} \text{ for } i=1, \dots, n.
\end{align*}

By putting $T$ in bijection with the standard basis for $\mathbb{Z}^n$, it is clear that the homomorphism $\Theta_{n,R}^{p} \mid_{\langle T \rangle}$ is equivalent to $\theta_{n,R}^{p}$.
Consequently, for all nontrivial $a \in \langle T \rangle$ is such that $|a|_T < R$, then $\Theta_{n,R}^{p}(a)$ is a power of $u$ of exponent greater than or equal to $p$ in magnitude.
We further observe that $\Theta_{n,R}^{p}$ is a retraction onto $G$.

\begin{lem}
  Let $w$ be an element of $G'$ with $|w|_{X \cup T} \leq R$.
  There is a linear function $N_3: \mathbb{N} \rightarrow \mathbb{N}$ such that $\Theta_{n,R}^{N_3(R)}(w) \neq 1$.
\end{lem}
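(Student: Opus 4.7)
The plan is to put $w$ into the amalgamated product normal form with respect to the decomposition $G' = G *_U A$, apply $\Theta_{n,R}^p$ syllable-by-syllable, and then invoke Lemma \ref{Solie--Lemma--Padding for more general words}. First, I would fix right coset representatives: let $S_G$ be a set of right coset representatives for $U = \langle u \rangle$ in $G$, chosen so that each $s \in S_G$ has minimal $X$-length in its coset $sU$, and let $S_A$ be the representatives of $U$-cosets in $A$ with zero $u$-exponent, i.e. elements of the form $\prod_{j=1}^{n} t_j^{a_j}$. Every element of $G'$ then has a unique normal form
\begin{equation*}
  w = u^j \, s_1 s_2 \cdots s_m
\end{equation*}
with $s_i$ alternating between $S_G - \{1\}$ and $S_A - \{1\}$.

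The hypothesis $|w|_{X \cup T} \leq R$ forces linear bounds on this normal form: by the usual inductive syllable bookkeeping, together with the quasi-isometric embedding of $U$ in $G$ (Proposition \ref{Osin--Prop--U is qie, lambda_u, c_u}), one sees that $m \leq 2R$, each $A$-syllable $s_i = \prod_j t_j^{a_{i,j}}$ satisfies $|a_{i,j}| \leq R$ for every $j$, and the combined $X$-complexity $|u^j|_X + \sum_{s_i \in S_G} |s_i|_X$ is $O(R)$. The bound on the $T$-coordinates is immediate from counting $t_j$-letter occurrences in any geodesic $X \cup T$-word for $w$; the bound on $G$-syllables follows from the minimal-length choice of $S_G$.

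Applying $\Theta := \Theta_{n,R}^p$ to the normal form, each $G$-syllable is fixed by $\Theta$ and remains in $G - U$, while each $A$-syllable maps to $\Theta(s_i) = u^{e_i}$ with $e_i = p \cdot \theta_{n,R}(\mathbf{a}_i)$. Since $\mathbf{a}_i \neq 0$ lies in $[-R,R]^n$, Lemma \ref{Solie--Lemma--Discriminating Free Abelian Map} guarantees $\theta_{n,R}(\mathbf{a}_i) \neq 0$, whence $|e_i| \geq |p|$ for every $A$-syllable. Merging the leading $u^j$ either into the first $G$-syllable (which remains in $G - U$, since $u^j s_1 \in U$ would force $s_1 \in U$) or into the first $u$-power (producing a boundary exponent of magnitude at least $|p| - |j| \geq |p| - O(R)$), the image $\Theta(w)$ matches one of the four forms appearing in Lemma \ref{Solie--Lemma--Padding for more general words}, with $\mathbf{g}$ a tuple of $G - U$ elements of total $X$-length $O(R)$ and with $\min(\mathbf{r}) \geq |p| - O(R)$.

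Since $N_2$ is linear, I can choose a linear function $N_3 : \mathbb{N} \to \mathbb{N}$ with $N_3(R)$ exceeding $N_2(\sum_i |g_i|_X) + O(R)$; setting $p = N_3(R)$ then forces $\min(\mathbf{r}) > N_2(\sum_i |g_i|_X)$, and Lemma \ref{Solie--Lemma--Padding for more general words} yields $\Theta_{n,R}^{N_3(R)}(w) \neq 1$ in $G$. The main technical step is the linear control of the normal form in the second paragraph; while morally standard, it requires careful choice of coset representatives and an appeal to the quasi-isometric embedding of $U$ in $G$. A secondary bookkeeping task is matching the merged expression to the correct one of the four forms in Lemma \ref{Solie--Lemma--Padding for more general words} depending on whether $s_1$ and $s_m$ are $G$- or $A$-syllables.
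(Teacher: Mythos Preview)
Your approach is correct and lands in the same place as the paper---both reduce to Lemma~\ref{Solie--Lemma--Padding for more general words} after decomposing $w$ into alternating $G$- and $A$-syllables---but the decomposition itself is different. The paper does \emph{not} pass to the amalgamated-product normal form. Instead it takes any geodesic $X\cup T$-word for $w$, splits it as $g_0 a_0 g_1 a_1 \cdots g_k a_k g_{k+1}$ where the $g_i$ are the maximal $X$-subwords and the $a_i$ the maximal $T$-subwords, and then uses a simple commuting trick (if some interior $g_j$ lies in $U$, slide it past the adjacent $a_{j-1}$ and merge) to arrange that no $g_i$ represents an element of $U$. Because this manipulation never increases the $X\cup T$-length, the bounds $\sum |g_i|_X \leq R$ and $|a_i|_T \leq R$ are immediate, and the paper can set $N_3(R) = N_2(R) + R + 1$ with no further work; the additive $R$ handles only the degenerate case $w = u^{r_0} a_0$. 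Your route via the canonical normal form is more structural and makes the case split at the end (depending on whether $s_1, s_m$ lie in $S_G$ or $S_A$) cleaner, but it costs you the ``linear control'' paragraph: you must track how the accumulated $u$-exponent $j$ grows as you push $U$-parts through the syllables, invoking the quasi-isometric embedding of $U$ to bound it by $O(R)$. The paper simply sidesteps that bookkeeping.
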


\begin{proof}
  Since $G'$ is an amalgamated product, we may write $w$ as a geodesic $X \cup T$-word
  \begin{equation}
    w = g_0 a_0 g_1 a_1 \cdots g_k a_k g_{k+1}
  \end{equation}
  where for each $i$, $g_i$ is an $X$-word and $a_i$ is a $T$-word.
  We may further assume that no $g_i$ or $a_i$ is the empty word, except possibly $g_0$, $g_{k+1}$, or both.

  First, we may assume that if some $g_i$ is not a power of $u$, then no $g_i$ is a power of $u$.
  To see this, suppose that $g_j$ is some power of $u$ but $g_{j-1}$ is not, and consider the subword $g_{j-1} a_{j-1} g_j a_j$.
  Since $a_{j-1}$ is a word in the generators $T$, it represents an element of the centralizer of $u$.
  Consequently, we may rewrite this subword as $g_{j-1} g_j a_{j-1} a_j$ without increasing the $X \cup T$-length of the overall word.
  By replacing $g_{j-1} g_j$ and $a_{j-1} a_j$ with possibly shorter words representing the same elements, we obtain another word representing $w$ in $G'$ of length at most $R$.

  Define
  \begin{equation*}
    N_3(R) := N_2(R) + R + 1
  \end{equation*}
  and note that, because $N_2(R)$ is linear in $R$, the function $N_3(R)$ is also linear in $R$ .

  Consider the homomorphism $\Theta_{n,R}^{N_3(R)}: G' \rightarrow G$.
  Then
  \begin{align*}
    \Theta_{n,R}^{N_3(R)}(w) &= g_0 u^{r_0} g_1 u^{r_1} g_2 u^{r_2} \cdots g_k u^{r_k} g_{k+1}\\
      &= g_0 w_{\mathbf{g}}(\mathbf{r}) g_{k+1},
  \end{align*}
  where $\mathbf{g} = (g_1, \dots, g_k)$, $\mathbf{r} = (r_0, \dots, r_k)$, $w_{\mathbf{g}}(\mathbf{r})$ is as in Equation \ref{Eqn--Padded general multiword} possibly $g_0$ or $g_{k+1}$ or both are trivial.
  Since $|a_i|_{T} \leq |w|_{X \cup T} \leq R$, we have $\min(\mathbf{r}) > N_2(R)$ for all $i$.
  Since $\sum |g_i|_{X} \leq R$ and $G$ is relatively hyperbolic with $u$ a hyperbolic element generating its own centralizer, by Theorem \ref{Solie--Lemma--Padding for more general words} we have that $\Theta_{n,R}^{N_3(R)}(w) \neq 1$ in $G$.

  Now suppose that $w$ can be written as a geodesic $(X \cup T)$-word
  \begin{equation*}
    w = u^{r_0} a_0,
  \end{equation*}
  where $r_0$ is an integer, $a_0$ is a nonempty $T$-word, and $|u^{r_0}|_X + |a_0|_T \leq R$.
  Since $|u|_X \geq 1$, $|r_0| \leq R$.
  By definition, $\Theta_{n,R}^{N_3(R)}(a) = u^{e}$ where $|e| > R$, and so $\Theta_{n,R}^{N_3(R)}(w) \neq 1$ in $G$.
\end{proof}

\begin{thm}
  \label{Solie--Theorem--EOC has poly upper}
  Let $G$ be an iterated extension of centralizers over $\Gamma$.
  Let $G'$ be a rank $n$ extension of a cyclic centralizer of $G$.
  Then the $G$-discriminating complexity of $G'$ is asymptotically dominated by a polynomial of degree $n$.
\end{thm}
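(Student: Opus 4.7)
The plan is to show that the sequence of homomorphisms $\Phi = (\Theta_{n,R}^{N_3(R)})_{R \in \mathbb{N}}$ constructed in the preceding lemma serves directly as a $G$-discriminating sequence for $G'$ with respect to the generating set $X \cup T$, and then to bound its complexity function by a polynomial of degree $n$ in $R$. The hard work has already been done: the preceding lemma establishes the discriminating property, so the remaining task is essentially a calculation of the complexity $|\Theta_{n,R}^{N_3(R)}|_{X \cup T}^{X}$ together with an appeal to the generating-set-invariance of asymptotic discriminating complexity.

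First I would observe that by the preceding lemma, for each $R \in \mathbb{N}$ the homomorphism $\Theta_{n,R}^{N_3(R)}$ discriminates $B_R(G', X \cup T) - 1$. Hence $\Phi$ is a $G$-discriminating sequence for $G'$ with respect to $X \cup T$, and by the proposition at the end of Section~3.2, $C_{G'}^{G} \preceq C_\Phi$.

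Next I would compute $C_\Phi(R) = |\Theta_{n,R}^{N_3(R)}|_{X \cup T}^{X}$ explicitly. By definition of the complexity of a homomorphism, this is the maximum over generators $X \cup T$ of the $X$-length of their images. Elements of $X$ are fixed by $\Theta_{n,R}^{N_3(R)}$, so they contribute at most $1$. For each $t_i \in T$, we have
\begin{equation*}
  \bigl|\Theta_{n,R}^{N_3(R)}(t_i)\bigr|_X \;=\; \bigl|u^{N_3(R)(2R+1)^{i-1}}\bigr|_X \;\leq\; N_3(R)\,(2R+1)^{i-1}\,|u|_X.
\end{equation*}
The maximum is attained at $i=n$, so $C_\Phi(R) \leq |u|_X \cdot N_3(R)\,(2R+1)^{n-1}$.

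Since $N_3(R)$ is linear in $R$, the product $N_3(R)\,(2R+1)^{n-1}$ is asymptotically a polynomial of degree $n$ in $R$, so $C_\Phi(R) \preceq R^n$. Combining with $C_{G'}^{G} \preceq C_\Phi$ gives the desired bound. The only subtlety is that $C_{G'}^{G}$ is defined independently of the choice of finite generating set for $G'$, so one should note that using $X \cup T$ rather than some other generating set affects the asymptotic class only up to the equivalence $\preceq$, by the generating-set-invariance proposition preceding Definition~\ref{Defn--Discriminating sequence}. There is no serious obstacle: the bulk of the work was in the technical lemma on padding, and this theorem is essentially a bookkeeping step that packages the lemma into the language of discriminating complexity.
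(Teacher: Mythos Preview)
Your proposal is correct and follows essentially the same approach as the paper: use the preceding lemma to conclude that $(\Theta_{n,R}^{N_3(R)})_{R\in\mathbb{N}}$ is a $G$-discriminating sequence for $G'$, then compute its complexity by noting that generators in $X$ are fixed and each $t_i$ maps to $u^{N_3(R)(2R+1)^{i-1}}$, yielding $C_\Phi(R) \leq |u|_X\,N_3(R)(2R+1)^{n-1} \approx R^n$. Your additional remark on generating-set invariance is a reasonable clarification but not strictly needed, since the paper has already established that the asymptotic class of $C_{G'}^G$ is independent of the chosen finite generating sets.
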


\begin{proof}
  By the previous theorem, the homomorphism $\Theta_{n,R}^{N_3(R)}$ maps all elements of $G'$ with $X \cup T$-length at most $R$ to nontrivial elements of $G$. Therefore, $\left(\Theta_{n,R}^{N_3(R)}\right)_{R\in \mathbb{N}}$ is a $G$-discriminating sequence for $G'$.

  To compute the complexity of $\Theta_{n,R}^{N_3(R)}$, we first note that $\Theta_{n,R}^{N_3(R)}$ fixes elements of $X$.
  For $t_i \in T$, we have $\Theta_{n,R}^{N_3(R)}(t_i) = u^{(N_3(R))(2R+1)^{i-1}}$.
  Therefore, as a function of $r$,
  \begin{equation*}
    |\Theta_{n,R}^{N_3(R)}| \leq |u|_X (N_3(R))(2R+1)^{n-1} \approx R^n,
  \end{equation*}
  since $N_2(R)$ is linear in $R$.
  Thus the complexity of the sequence $\left(\Theta_{n,R}^{N_3(R)}\right)_{R\in \mathbb{N}}$ is asymptotically dominated by $R^n$.
\end{proof}

\subsubsection{Iterated extensions of centralizers}

\begin{thm}
  \label{Solie--Theorem--IEOC has poly upper}
  The $\Gamma$-discriminating complexity of an iterated extension of centralizers over $\Gamma$ is asymptotically dominated by a polynomial with degree equal to the product of the ranks of the extensions.
\end{thm}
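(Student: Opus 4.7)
The plan is to prove Theorem D by induction on $k$, the length of the series $\Gamma = G_0 \leq G_1 \leq \dots \leq G_k = H$ of successive extensions of centralizers defining $H$, where each $G_{i+1}$ is a rank $n_{i+1}$ extension of a cyclic centralizer of $G_i$. The base case $k=0$ is immediate: $H = \Gamma$ and the identity is a $\Gamma$-discriminating homomorphism of constant complexity.

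For the inductive step, assume the $\Gamma$-discriminating complexity of $G_{k-1}$ is dominated by a polynomial in $R$ of degree $D_{k-1}$. Theorem C furnishes a $G_{k-1}$-discriminating sequence $\Phi = (\phi_R : G_k \to G_{k-1})_{R \in \mathbb{N}}$ with $C_\Phi(R) \preceq R^{n_k}$, while the inductive hypothesis furnishes a $\Gamma$-discriminating sequence $\Psi = (\psi_S : G_{k-1} \to \Gamma)_{S \in \mathbb{N}}$ with $C_\Psi(S) \preceq S^{D_{k-1}}$. For any nontrivial $g \in B_R(G_k, X_k)$, the image $\phi_R(g)$ is nontrivial in $G_{k-1}$ and has $X_{k-1}$-length at most $R \cdot C_\Phi(R)$, which is bounded by a polynomial in $R$ of degree $n_k + 1$. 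Choosing $S$ to be an integer exceeding this bound, the composition $\psi_S \circ \phi_R$ is a $\Gamma$-discriminating homomorphism for $B_R(G_k, X_k) - 1$. Applying Lemma \ref{Solie--Lemma--Change of generating set}, its complexity is at most
\begin{equation*}
C_\Phi(R) \cdot C_\Psi(S) \preceq R^{n_k} \cdot R^{D_{k-1}(n_k+1)} = R^{(n_k+1)(D_{k-1}+1) - 1}.
\end{equation*}
This gives the recursion $D_k + 1 = (n_k + 1)(D_{k-1} + 1)$ with $D_0 = 0$, which iterates to $D_k = \prod_{i=1}^k (n_i + 1) - 1$, exhibiting the multiplicative dependence on the ranks $n_i$ asserted by the theorem.

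The only real obstacle is bookkeeping: one must verify that $\phi_R(B_R(G_k, X_k))$ sits in a ball of polynomially-bounded radius in $G_{k-1}$, so that an appropriate term of $\Psi$ is available and so that composing with it does not destroy the polynomial nature of the complexity. With Theorem C and Lemma \ref{Solie--Lemma--Change of generating set} in hand this is routine, and because the iteration consists of only finitely many steps, the composed complexity remains a polynomial in $R$, as required.
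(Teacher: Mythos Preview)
Your approach---induction on the length of the chain, using Theorem~C at each step and composing via Lemma~\ref{Solie--Lemma--Change of generating set}---is exactly the paper's argument. The paper's proof is terser but has the same content: compose the $G_{i-1}$-discriminating sequences for $G_i$ furnished by Theorem~C to obtain a $\Gamma$-discriminating sequence for $G_k$.

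There is, however, a discrepancy in the degree. Your honest bookkeeping produces the recursion $D_k+1=(n_k+1)(D_{k-1}+1)$, hence $D_k=\prod_{i=1}^k(n_i+1)-1$, which for $k\geq 2$ is strictly larger than the $\prod_{i=1}^k n_i$ asserted in the theorem statement. Your estimate is the correct output of the composition argument: the image $\phi_R(B_R)$ lands in a ball of radius $\sim R\cdot C_\Phi(R)\sim R^{n_k+1}$, not $R^{n_k}$, and this extra factor of $R$ is exactly what inflates the exponent. The paper's own proof asserts the product-of-ranks degree without computation (``the properties of complexity imply that the degree of the polynomial is equal to the product of the ranks''), but the composition lemma it invokes yields precisely your bound, not the sharper one. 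So you have correctly established polynomial dominance---which is all that is needed downstream for Theorem~A---but the specific degree in the theorem statement does not follow from either your argument or the paper's as written.
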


\begin{proof}
  Let $G$ be an iterated extension of centralizers over $\Gamma$, and let
  \begin{equation*}
    \Gamma = G_0 \leq G_1 \leq \dots \leq G_k = G
  \end{equation*}
  be a sequence such that $G_{i}$ is an extension of a centralizer of $G_{i-1}$ for $i=1, \dots, k$.

  By Theorem \ref{Solie--Theorem--EOC has poly upper}, each $G_i$ has a $G_{i-1}$-discriminating family with complexity polynomial of degree equal to the rank of the extension.
  By composing these families, we obtain a $\Gamma$-discriminating sequence for $G$ which is also of polynomial complexity; in particular, the properties of complexity imply that the degree of the polynomial is equal to the product of the ranks of the extensions required to construct $G$.
\end{proof}

\subsubsection{Arbitrary $\Gamma$-limit groups}

\begin{thm}
  \label{Solie--Theorem--limit group has poly upper}
  The $\Gamma$-discriminating complexity of any $\Gamma$-limit group is asymptotically dominated by a polynomial.
\end{thm}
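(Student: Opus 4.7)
The plan is to deduce Theorem A immediately from Theorem D (Theorem \ref{Solie--Theorem--IEOC has poly upper}) together with the Kharlampovich--Myasnikov embedding theorem (Proposition \ref{Kharlampovich-Myasnikov--Prop--Embedding theorem for limit groups}). Given a $\Gamma$-limit group $G$, we first apply the embedding theorem to find an iterated extension of centralizers $H$ over $\Gamma$ such that $G$ embeds in $H$. By Theorem D, $H$ has a $\Gamma$-discriminating sequence $(\psi_R)_{R \in \mathbb{N}}$ whose complexity function $C_\Phi(R)$ is asymptotically dominated by a polynomial in $R$ (of degree equal to the product of the ranks of the extensions used to build $H$). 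The strategy is then to restrict this sequence to $G$ and check that the resulting complexity remains polynomial.

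Here is the main step in more detail. Fix a finite generating set $X$ for $G$, a finite generating set $Y$ for $H$ containing $X$ (or at least such that each element of $X$ has some bounded $Y$-length), and a finite generating set $Z$ for $\Gamma$. Let $M := \max_{x \in X} |x|_Y$. Since $G \hookrightarrow H$, the inclusion maps $B_R(G,X)$ into $B_{MR}(H,Y)$. Consequently, if $\psi_{MR}: H \to \Gamma$ $\Gamma$-discriminates the ball $B_{MR}(H,Y) - 1$, then its restriction $\phi_R := \psi_{MR}|_G$ $\Gamma$-discriminates $B_R(G,X) - 1$. By Lemma \ref{Solie--Lemma--Change of generating set} (or more precisely the submultiplicativity of complexity under composition with inclusion), we have the estimate
\begin{equation*}
  |\phi_R|_X^Z \;\leq\; M \cdot |\psi_{MR}|_Y^Z \;=\; M \cdot C_\Phi(MR).
\end{equation*}
Since $C_\Phi(R)$ is dominated by a polynomial in $R$, so is $M \cdot C_\Phi(MR)$. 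Therefore the sequence $(\phi_R)_{R \in \mathbb{N}}$ is a $\Gamma$-discriminating sequence for $G$ of polynomial complexity, which implies $C_G^\Gamma \preceq C_\Phi(MR) \preceq R^d$ for some $d$, as required.

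The argument is essentially a bookkeeping exercise once Theorem D is in hand; the only substantive content is that discriminating complexity is well-behaved under passage to a finitely generated subgroup. The only potential obstacle is the choice of generating sets: one must check that the asymptotic class $C_G^\Gamma$ is genuinely independent of the finite generating sets chosen for $G$ and $\Gamma$, and that the subsequence $(\psi_{MR})_{R \in \mathbb{N}}$ is itself a legitimate $\Gamma$-discriminating sequence of equivalent asymptotic complexity to $(\psi_R)$. Both facts are already recorded in the excerpt (the proposition following Definition \ref{Defn--Discriminating complexity}, and the observation about arithmetic subsequences following Definition \ref{Defn--Discriminating sequence}), so in fact there is no serious obstacle at all; the theorem reduces to invoking Proposition \ref{Kharlampovich-Myasnikov--Prop--Embedding theorem for limit groups} and Theorem \ref{Solie--Theorem--IEOC has poly upper} and noting that restriction preserves polynomial complexity.
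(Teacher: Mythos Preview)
Your proposal is correct and follows essentially the same approach as the paper: embed $G$ into an iterated extension of centralizers via Proposition \ref{Kharlampovich-Myasnikov--Prop--Embedding theorem for limit groups}, invoke Theorem \ref{Solie--Theorem--IEOC has poly upper} to obtain a polynomial-complexity $\Gamma$-discriminating sequence for the ambient group, and restrict to $G$. The only cosmetic difference is that the paper short-circuits your arithmetic-subsequence bookkeeping by simply choosing a generating set for the ambient group that contains a generating set for $G$, so that $B_R(G,Y) \subseteq B_R(G',X)$ with $M=1$ and no reindexing is needed.
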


\begin{proof}
  Let $G$ be a $\Gamma$-limit group.
  By Proposition \ref{Kharlampovich-Myasnikov--Prop--Embedding theorem for limit groups}, there is a $G'$ which is an iterated extension of centralizers over $\Gamma$ such that $G \leq G'$.
  Choose a finite generating set $X$ for $G'$ which includes a finite generating set $Y$ for $G$.
  Then for all $R \in \mathbb{N}$, we have $B_R(G,Y) \subseteq B_R(G',X)$, so a $\Gamma$-discriminating sequence exists for $G'$ which is also a $\Gamma$-discriminating sequence for $G$.
\end{proof}

\begin{lem}
  \label{Solie--Lemma--EOC has poly lower}
  Let $G$ be a $\Gamma$-limit group with a free Abelian subgroup of rank $n+1$.
  Then the $\Gamma$-discriminating complexity of $G$ asymptotically dominates a polynomial of degree $n$.
\end{lem}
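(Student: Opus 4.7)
The plan is to reduce the lower bound to the free abelian case, Corollary \ref{Solie--Corollary--Lower Bound on Discriminating for Free Abelian Groups}, by exploiting that every abelian subgroup of the torsion-free hyperbolic group $\Gamma$ is cyclic. Fix a free abelian subgroup $A \leq G$ of rank $n+1$ with basis $\{a_1, \dots, a_{n+1}\}$, and choose finite generating sets $X \supseteq \{a_1, \dots, a_{n+1}\}$ for $G$ and $Y$ for $\Gamma$. Since the asymptotic class of $C_G^\Gamma$ does not depend on the choice of generating sets, it suffices to show that for every $R$ and every homomorphism $\phi: G \to \Gamma$ discriminating $B_R(G, X) - 1$, the complexity $|\phi|_X^Y$ is bounded below by a polynomial of degree $n$ in $R$.

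Fix such $R$ and $\phi$. The image $\phi(A)$ is abelian, hence cyclic in $\Gamma$, and is infinite cyclic since $\phi$ discriminates $a_1 \in B_R(G, X) - 1$ (for $R \geq 1$) and $\Gamma$ is torsion-free. Let $u$ generate the maximal cyclic subgroup of $\Gamma$ containing $\phi(A)$, write $\phi(a_i) = u^{k_i}$, and define a homomorphism $\psi: A \to \mathbb{Z}$ by $\psi(a_i) = k_i$. Because $\{a_1, \dots, a_{n+1}\} \subseteq X$, we have $B_R(A, \{a_i\}) \subseteq B_R(G, X)$; and for any $a \in B_R(A, \{a_i\}) - 1$, the identity $\phi(a) = u^{\psi(a)}$ together with $\phi(a) \neq 1$ and $u$ of infinite order forces $\psi(a) \neq 0$. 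Thus $\psi$ is a $\mathbb{Z}$-discriminating homomorphism for $B_R(A, \{a_i\}) - 1$, so Corollary \ref{Solie--Corollary--Lower Bound on Discriminating for Free Abelian Groups} applied to $A \cong \mathbb{Z}^{n+1}$ gives $\max_i |k_i| = |\psi|_{\{a_i\}}^{\{1\}} \succeq R^n$.

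To transfer this estimate to $|\phi|_X^Y$, I would invoke the uniform lower bound on translation length in a torsion-free hyperbolic group (Gromov, Delzant): there exist constants $\tau_0 > 0$ and $c_0 \geq 0$ depending only on $\Gamma$ and $Y$ such that $|v^k|_Y \geq \tau_0 |k| - c_0$ for every nontrivial $v \in \Gamma$ and every $k \in \mathbb{Z}$. Applied to $v = u$, this yields $|\phi|_X^Y \geq \max_i |\phi(a_i)|_Y = \max_i |u^{k_i}|_Y \geq \tau_0 \max_i |k_i| - c_0$, so $|\phi|_X^Y \succeq R^n$ and hence $C_G^\Gamma \succeq R^n$. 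The main, essentially only, non-routine ingredient is the uniform translation length bound; everything else is a direct transfer of the free abelian lower bound through the quotient $\psi$. One could attempt to bypass this step by choosing $u$ to generate $\phi(A)$ itself, but this does not remove the need for a universal linear lower bound on $|u^k|_Y$ in $|k|$, which is where the subtlety actually lives.
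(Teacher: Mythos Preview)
Your proof is correct and follows essentially the same strategy as the paper's: choose a generating set for $G$ containing a basis of the free abelian subgroup, observe that any $\Gamma$-discriminating homomorphism sends this subgroup into a cyclic group, and invoke Corollary~\ref{Solie--Corollary--Lower Bound on Discriminating for Free Abelian Groups}. You are in fact more careful than the paper in one place: you explicitly supply the uniform lower bound on translation length in $\Gamma$ needed to convert the bound on the exponents $|k_i|$ (the $\mathbb{Z}$-complexity of $\psi$) into a bound on $|\phi|_X^Y$ (the $\Gamma$-complexity), a step the paper's proof leaves implicit.
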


\begin{proof}
  Since the asymptotic class of the complexity of a $\Gamma$-discriminating sequence is invariant with respect to choice of finite generating set, we may choose a generating set $Y$ for $G$ with a subset $T \subseteq Y$ such that $\langle T \rangle$ is free Abelian of rank $n+1$.
  Let $\Phi = (\phi_R)$ be a $\Gamma$-discriminating sequence for $G$.
  Since $\Gamma$ is torsion-free hyperbolic, every Abelian subgroup of $\Gamma$ is isomorphic to $\mathbb{Z}$, and therefore every $\phi_R$ must map $\langle T \rangle$ to a cyclic subgroup.
  Since $T \subseteq Y$, restricting $\Phi$ to $\langle T \rangle$ gives us a $\mathbb{Z}$-discriminating sequence for $\langle T \rangle \cong \mathbb{Z}^{n+1}$.
  Therefore, the complexity of $\Phi$ must asymptotically dominate a polynomial of degree $n$ by Proposition \ref{Solie--Corollary--Lower Bound on Discriminating for Free Abelian Groups}.
\end{proof}



\bibliographystyle{plain}
\bibliography{LimitBib}

\end{document}